\newtheorem{theorem}{Theorem}[section]
\newtheorem{proposition}[theorem]{Proposition}
\newtheorem{lemma}[theorem]{Lemma}
\newtheorem{remark}[theorem]{Remark}
\newtheorem*{remark*}{Remark}
\newtheorem*{remarks*}{Remarks}
\newtheorem{ass}[theorem]{Assumption}
\newtheorem*{notation*}{Notation}
\newtheorem*{ex*}{Example}
\newtheorem*{exs*}{Examples}
\newtheorem*{app*}{Application}
\newtheorem{conjecture*}{Conjecture}
\def\ts{\thinspace}
\newcommand{\invariant}{Q}
\newcommand{\invariantmatrix}{S}
\newcommand{\methodA}{A}
\newcommand{\methodB}{B}
\title{
Multirevolution integrators for differential equations with fast stochastic oscillations
}
\author{
Adrien Laurent\textsuperscript{1} and Gilles Vilmart\textsuperscript{1}
}
\begin{document}
\footnotetext[1]{
Universit\'e de Gen\`eve, Section de math\'ematiques, 2-4 rue du Li\`evre, CP 64, CH-1211 Gen\`eve 4, Switzerland. Adrien.Laurent@unige.ch, Gilles.Vilmart@unige.ch.}

\maketitle

\begin{abstract}
We introduce a new methodology based on the multirevolution idea for constructing integrators for stochastic differential equations in the situation where the fast oscillations themselves are driven by a Stratonovich noise.
Applications include in particular highly-oscillatory Kubo oscillators and spatial discretizations of the nonlinear Schrödinger equation with fast white noise dispersion.
We construct a method of weak order two with computational cost and accuracy both independent of the stiffness of the oscillations.
A geometric modification that conserves exactly quadratic invariants is also presented.

\smallskip
\noindent
{\it Keywords:\,}
highly-oscillatory stochastic differential equations, nonlinear Schrödinger equation, white noise dispersion, geometric integration, quadratic first integral.
\smallskip

\noindent
{\it AMS subject classification (2010):\,}
60H35, 35Q55, 34E13.
\end{abstract}


\section{Introduction}

This article aims at developing invariant-preserving integrators of second weak order that are robust with respect to the stiffness $\varepsilon$ both in accuracy and cost for the following class of highly-oscillatory $d$-dimensional SDEs driven by a one-dimensional Stratonovich noise
\begin{equation}
\label{equation:NLS}
dX(t)=\frac{1}{\sqrt{\varepsilon}}A X(t)\circ dW(t)+F(X(t)) dt,\ t>0,\ X(0)=X_0,
\end{equation}
where $W$ is a standard one-dimensional Wiener process, the function $F:\R^d\rightarrow \R^d$ is a smooth non-linear map, the stiff parameter $\varepsilon>0$ is fixed and assumed small, and $A\in \R^{d\times d}$ is a given matrix satisfying $e^A=\Id$ (equivalently $A$ is diagonalizable and has all its eigenvalues in $2i\pi\Z$).
In the deterministic setting, this last property yields that the solution $x(t)=\exp(\varepsilon At)x_0$ of $\frac{dx}{dt}=\varepsilon^{-1}A  x$ is $\varepsilon$-periodic.
For stochastic oscillations, it means that the solution $X(t)=\exp(\varepsilon^{-1/2}AW(t))X_0$ of $dX=\varepsilon^{-1/2}A  X \circ dW$ satisfies $X(T)=X(0)$ for a random time $T=\inf\{t>0, \left|\varepsilon^{-1/2}W(t)\right|=1\}$ of mean $\varepsilon$.
The class of SDEs \eqref{equation:NLS} includes in particular highly-oscillatory Kubo oscillators (see \cite{Cohen12otn})
\begin{equation}
\label{equation:Kubo_oscillator}
dX=\frac{2\pi}{\sqrt{\varepsilon}} \begin{pmatrix}
0&-1\\1&0
\end{pmatrix} X\circ dW+\begin{pmatrix}
0&-a\\a&0
\end{pmatrix} X dt,\ a\in\R,
\end{equation}
or equivalently, $dY=2i\pi\varepsilon^{-1/2} Y\circ dW+ia Y dt$ in the complex setting where $Y=X_1+i X_2$.

Applying standard SDE integrators to solve equation \eqref{equation:NLS} requires in general a time stepsize $h\leq \varepsilon$ to be accurate, which makes these methods dramatically expensive when $\varepsilon$ is small. The goal of this paper is to create robust numerical methods, i.e.\ts numerical integrators whose cost and accuracy do not deteriorate when $\varepsilon$ becomes small.
Several classes of methods have already been developed for highly-oscillatory SDEs with a deterministic fast oscillation (see for instance \cite{Cohen12cao,Vilmart14wso}), but not in the case where the stiff oscillatory part is applied to the noise itself.
To numerically face this challenge, we introduce in this paper a new methodology to develop robust methods of any high weak order to approximate the solution of equation \eqref{equation:NLS}. In particular, we propose a method of weak order two, and a geometric modification of this algorithm that preserves quadratic invariants.

Stochastic oscillations as defined in \eqref{equation:NLS} typically arise in fiber optics models (see \cite{Agrawal07nfo,Agrawal08aon,Garnier02sod}) with a spatial discretizations of the highly-oscillatory nonlinear Schr\"odinger equation (NLS) with white noise dispersion
\begin{equation}
\label{equation:NLS_WND_general}
du(t)=\frac{i}{\sqrt{\varepsilon}}\Delta u(t)\circ dW(t)+F(u(t)) dt,\ u(t=0)=u_0.
\end{equation}
As described for instance in \cite{Garnier02sod}, in the case $\varepsilon=1$, the NLS equation \eqref{equation:NLS_WND_general} with a cubic nonlinearity $F(u)=|u|^2u$ is a model in dimension $d=1$ describing the propagation of a signal in optical fibers where $x$ corresponds to the retarded time, while $t$ corresponds to the distance along the fiber. Taking into account the inevitable chromatic dispersion effects of the signal, modeled by a random centered stationary process $m$ with a coefficient $\nu>0$,
yields the following random PDE,
$$
\frac{\partial v}{\partial x}(x,t)=\nu i m(x)\frac{\partial^2 v}{\partial t^2}(x,t)+\nu^2 F(v(x,t)),\ v(x=0,t)=u_0(t).
$$
The perfect fiber would satisfy $m=0$, but in practice, engineers build fibers with a small varying dispersion coefficient.
To limit the pulse broadening induced by random dispersion, specialists use a wide range of dispersion management techniques (see for instance \cite{Garnier02sod} and references therein).
In \cite{Marty06oas,DeBouard10tns}, the authors show that if we denote $u^{\nu}(x,t)=v(x/\nu^2,t)$, then as $\nu$ tends to $0$ and under some ergodicity assumptions on $m$, $u^{\nu}$ converges to the solution $u$ of equation \eqref{equation:NLS_WND_general} with $\varepsilon=1$.
The non-stiff counterpart of equation \eqref{equation:NLS_WND_general}, i.e.\ts for $\varepsilon=1$, has also been studied theorically in \cite{Debussche11qns} for a particular nonlinearity.
The highly-oscillatory behaviour ($\varepsilon \ll 1$) appears naturally when observing the propagation in long time with a small nonlinearity (via the change of variable $t\leftarrow \varepsilon t$) or the propagation of a small initial data in an optical fiber with a polynomial nonlinearity (via the change of variable $u\leftarrow u/\varepsilon$).
A goal of this article is to develop efficient and cheap numerical methods that can model the propagation of pulses in this context, in order to observe some specific behaviors and, ultimately, to build enhanced fibers.
Models of the form \eqref{equation:NLS_WND_general} also appear in the recent work \cite{Faou18lwt} in the context of stochastic three-wave semi-linear systems.
We emphasize that there is a growing interest in the recent litterature for stochastic models involving a fast Stratonovitch noise in the context of ergodic stochastic dynamics. In \cite{Abdulle19act}, it is shown for a class of overdamped Langevin equations that adding an appropriate fast Stratnovitch noise permits to increase the convergence rate to equilibrium, while reducing the asymptotic variance at infinity. This suggests that new efficient samplers for the invariant distribution of Langevin type models in context of large dimensional molecular dynamics models could be developed.
We also mention the recent homogenization results on stochastic dynamics with fast Stratonovitch noises in \cite{Li18hoh} where our periodicity assumption is replaced by an ergodicity assumption on the fast component of the dynamics posed on manifolds.

Numerous possibilities exist for numerically integrating equations \eqref{equation:NLS} or \eqref{equation:NLS_WND_general}.
We highlight in particular the exponential integrators \cite{Cohen12otn,Erdogan18anc} for the SDE \eqref{equation:NLS}, and the exponential integrators \cite{Cohen17eif}, the Fourier split-step method \cite{Marty06oas} or the Crank-Nicholson scheme \cite{Belaouar15nao} for the SPDE \eqref{equation:NLS_WND_general}.
These methods have the advantage that they preserve the $L^2$ invariant of the equation (that is $\norme{u(t)}_{L^2}=\norme{u_0}_{L^2}$ for all $t\geq 0$) for a class of polynomial nonlinearities.
However they face a severe timestep restriction $h\leq \varepsilon$ when the stiff parameter $\varepsilon$ is small.
Even in the case of deterministic oscillations, there are restrictions in general, though some robust algorithms exist (see \cite{Cohen12cao} for instance).
The methods presented in this paper solve this issue of stepsize restriction.
The idea is to approximate the solution of equation \eqref{equation:NLS} at random times called revolution times because they correspond to complete revolutions of the oscillatory part $dX=\varepsilon^{-1/2}A  X \circ dW$. This is in the spirit of \cite{Hofmann00oao} which also approximates the solution of SDEs at random times.

The article is organized as follows.
Section \ref{section:multirevolution_integrators} is devoted to the presentation of the new integrators.
In Section \ref{section:construction_algorithm}, we build an asymptotic expansion of the solution of \eqref{equation:NLS} and evaluate it at revolution times to derive the new integrators and a limit model for equation \eqref{equation:NLS}.
Section \ref{section:convergence_theorem} is devoted to the weak convergence theorems and their proofs.
In Section \ref{section:numerical experiments}, we present numerical experiments to confirm our theoretical error estimates, and we apply the new methods to solve numerically the Schrödinger equation \eqref{equation:NLS_WND_general}.

\section{Multirevolution integrators for stochastic oscillators}
\label{section:multirevolution_integrators}

Initially created in \cite{Melendo97ana, Calvo04aco} in the context of celestial mechanics and later extended using geometric integration (see for instance \cite{Murua99ocf, Calvo07oem, Chartier14mrc}), multirevolution methods represent a class of numerical methods used for solving highly-oscillatory differential equations while reducing the cost of computation.
\begin{figure}[t]
	\begin{center}
		\includegraphics[scale=0.47]{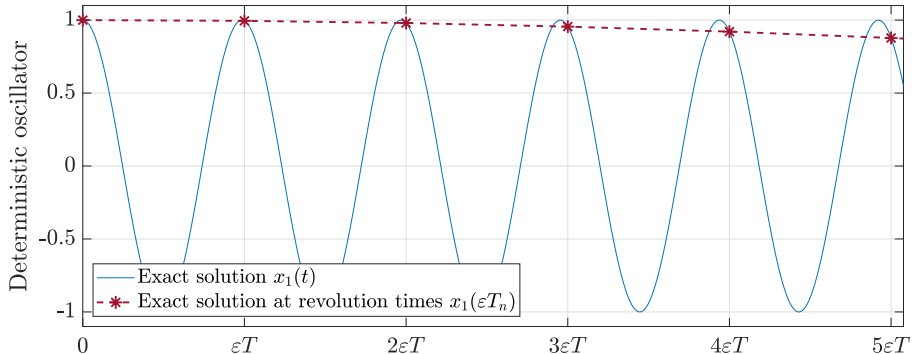}
		\caption{Exact solution evaluated at revolution times for the deterministic oscillator \eqref{equation:highly_oscillatory_ODE} with $F(y)=iy$ and $\varepsilon=10^{-1}$.}
		\label{figure:Comparison_revolution_time_deterministic}
	\end{center}
\end{figure}
\noindent
In particular, they can approximate the solution of highly-oscillatory ODEs of the following form at stroboscopic times $\varepsilon N T$, where $T=1$ is the period of $\frac{dx}{dt}=Ax$, and $N$ is an integer,
\begin{equation}
\label{equation:highly_oscillatory_ODE}
\frac{dx}{dt}=\frac{1}{\varepsilon}A x+F(x), \ x(0)=x_0.
\end{equation}
The solution $x$ of this equation at times $\varepsilon NT$ is a perturbation of identity, that is $x$ satifies $x(\varepsilon t)=x_0+\OO(\varepsilon t)$, thus the solution loses its highly-oscillatory feature when evaluated at stroboscopic times, as shown in Figure \ref{figure:Comparison_revolution_time_deterministic} for the first component of the solution of equation \eqref{equation:highly_oscillatory_ODE} with $F(x)=ix$ (respectively $F(y)=\begin{pmatrix}
0&-1\\1&0
\end{pmatrix}y$ in the real setting).
The idea of multirevolution is to approximate $x(\varepsilon N)$ with $N=\OO(\varepsilon^{-1})$ with a cost independent of $\varepsilon$.

\begin{figure}[t]
	\begin{center}
		\includegraphics[scale=0.47]{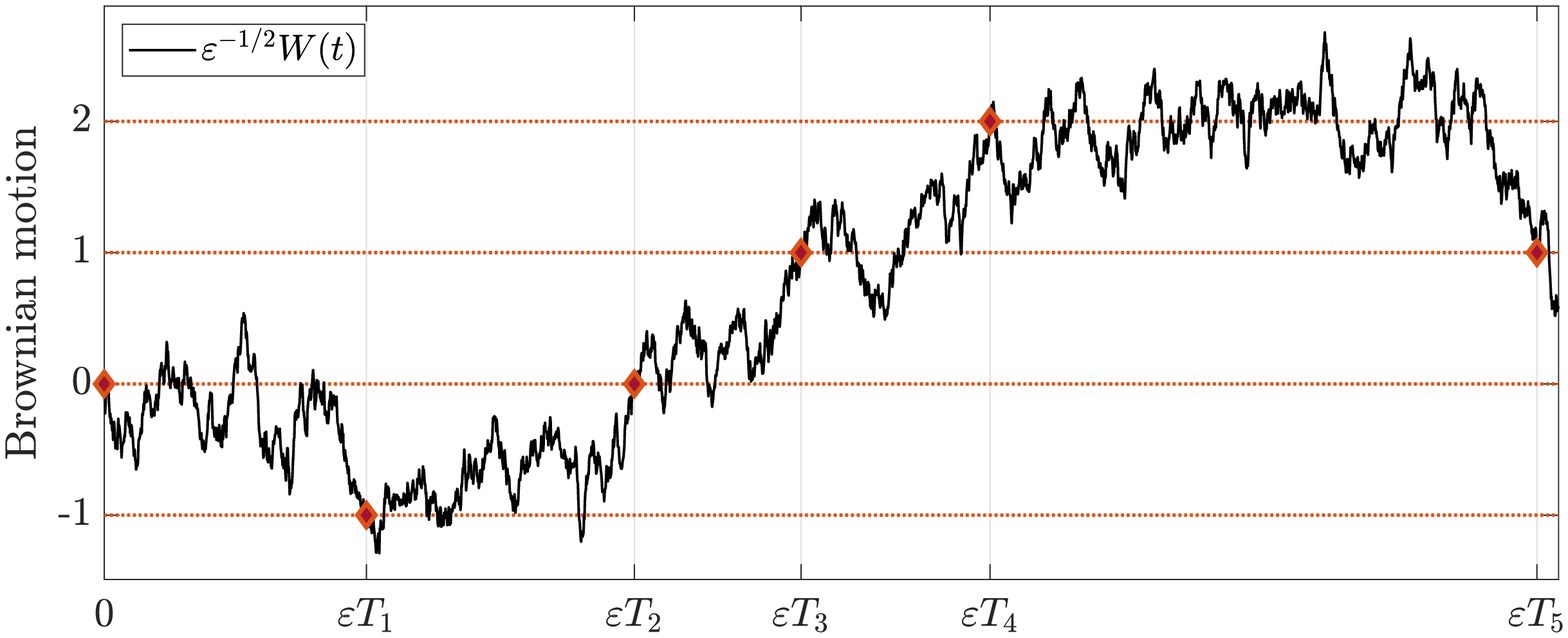}
		\includegraphics[scale=0.47]{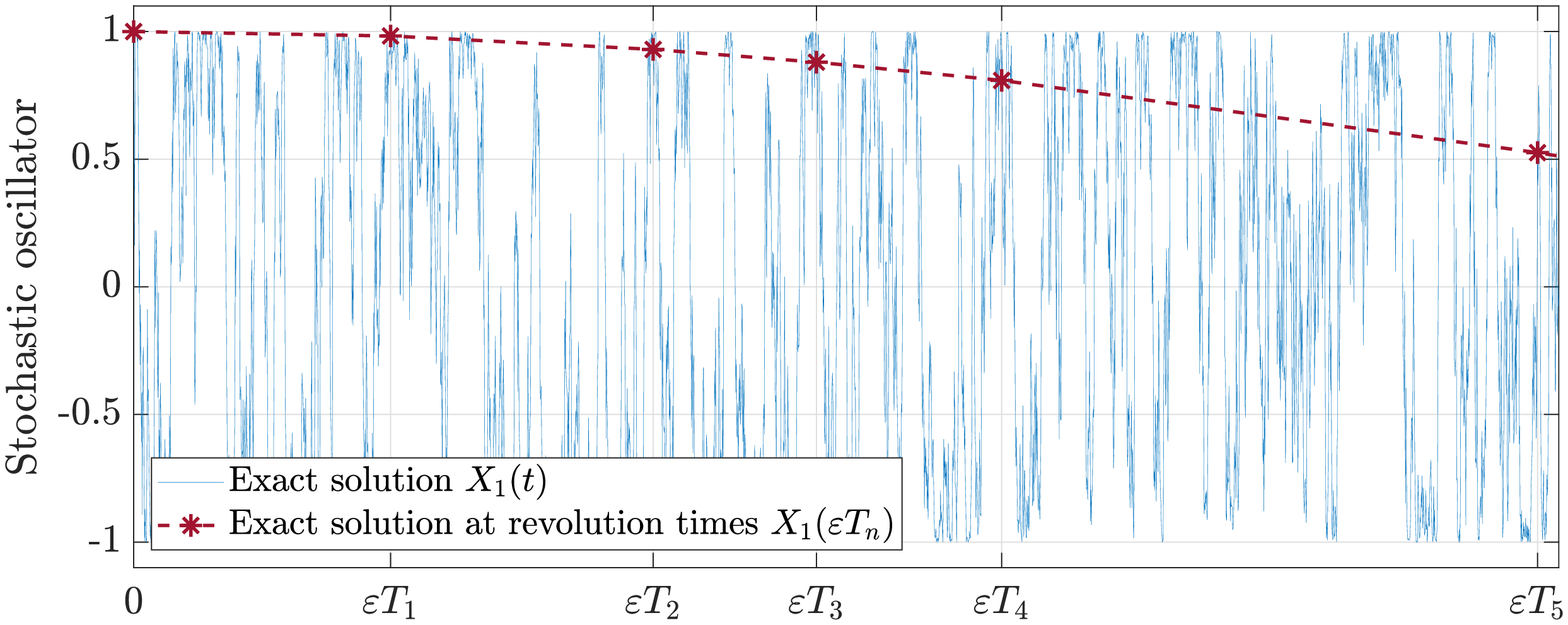}
		\caption{Revolution times \eqref{equation:definition_revolution_times} of a Brownian path (top) and exact solution evaluated at revolution times for the Kubo oscillator \eqref{equation:Kubo_oscillator} with $a=1$ and $\varepsilon=10^{-1}$ (bottom).}
		\label{figure:Comparison_revolution_time_stochastic}
	\end{center}
\end{figure}
For stochastic oscillations, the solution $X(t)=e^{\varepsilon^{-1/2}AW(t)}X_0$ of $dX=\varepsilon^{-1/2}AX\circ dW$ is not periodic, but satisfies $X(\varepsilon T_N)=X_0$ where the $T_N$ are random variable called revolution times and defined by
\begin{align}
\label{equation:definition_revolution_times}
T_0&=0,\\
T_{N+1}&=\inf\left\{t>T_N, \varepsilon^{-1/2}\abs{W(\varepsilon t)-W(\varepsilon T_N)}\geq 1\right\}, \ N=0,1,2,\ldots \nonumber
\end{align}
If $X$ is the solution of \eqref{equation:NLS}, we show in Section \ref{section:asymptotic_expansion} that $X$ evaluated at times $\varepsilon T_N$ is a perturbation of identity (in a strong and weak sense).
Figure \ref{figure:Comparison_revolution_time_stochastic} illustrates the definition of revolution times and shows the perturbation of identity property on the first component of a Kubo oscillator \eqref{equation:Kubo_oscillator} with $a=1$.
We highlight that the revolutions times $T_N$ can be simulated without simulating the exact path $W$. Also we emphasize that the proposed algorithms do not require to simulate $W$ thanks to the use of appropriate discrete random variables. This will be detailed in Section \ref{section:stiff_integrators}.

We show in Section \ref{section:limit_model} that the solution $X$ of \eqref{equation:NLS} evaluated at times $\varepsilon T_{t\varepsilon^{-1}}$ (when $t/\varepsilon\in\N$ is an integer) converges weakly when $\varepsilon\rightarrow 0$ to the solution $y_t$ of the deterministic ODE
$$\frac{d y_t}{dt}=\langle g^0\rangle (y_t), \ y_{0}=X_0,$$
where $g_\theta^0(y) = e^{-A\theta} F(e^{A\theta}y)$ and $\langle g^0\rangle :=\int_0^1 g_\theta^0 d\theta$. This ODE is exactly the same one as the asymptotic model for deterministic oscillators of the form \eqref{equation:highly_oscillatory_ODE}.
This asymptotic model naturally yields a weak order 1 deterministic integrator.
We propose the two following new multirevolution methods of second weak order for integrating equation \eqref{equation:NLS} at the revolution times $\varepsilon T_{Nm}$ for $m=0, 1, 2,\ldots$ with cost in $H=N\varepsilon=\OO(1)$ independent of $\varepsilon$.
Method \ref{algorithm:geometric_weak_order_2} is a geometric modification of Method \ref{algorithm:weak_order_2} to preserve quadratic invariants of the form $\invariant(y)=\frac{1}{2}y^T \invariantmatrix y$ where $ \invariantmatrix \in \R^{d\times d}$ is a given symmetric matrix.
Methods A and B involve a Fourier decomposition of the following functions
that are 1-periodic with respect to $\theta$, 
\begin{align}
\label{equation:def_g_theta}
g_\theta^0(y) &= e^{-A\theta} F(e^{A\theta}y)=\sum_{k\in\Z} c_k^0(y) e^{2i\pi k \theta}\\
g_\theta^1(y)(z) &= e^{-A\theta}F'(e^{A\theta}y)(e^{A\theta} z) =\sum_{p\in\Z} c_p^1(y)(z) e^{2i\pi p \theta}\nonumber
\end{align}
with respective Fourier coefficients $(c_k^0(y))_{k\in\Z}$ and $(c_p^1(y))_{p\in\Z}$.
The series appearing in \eqref{equation:def_g_theta} have an infinite number of terms in general. For a practical implementation of the new methods, we truncate these series up to an even number of modes $K_t$, while inducing an exponentially small error (see Remark~\ref{rem:trunc}).
For each timestep, we also introduce the bounded discrete random variables $(\widehat{\alpha}_k^N)_k$, and deterministic sequences $(\widehat{\beta}_{p,k}^N)_{p,k}$ and $(\widehat{\widetilde{\beta}}_{p,k}^N)_{p,k}$ that satisfy
\vspace{-18pt}
\begin{multicols}{2}
$$
\begin{array}{rl}
\E[\widehat{\alpha}_k^N]&=\left\{
\begin{array}{l}
1 \text{ if } k=0\\
0 \text{ else}
\end{array}
\right.\\
\widehat{\beta}_{p,k}^N&=\left\{
\begin{array}{l}
\frac{1}{2}+\frac{1}{3N} \text{ if } p=k=0\\
\frac{1}{2\pi^2k^2N} \text{ if } p=0,\:  k\neq 0\\
\frac{-1}{2\pi^2p^2N} \text{ if } p\neq 0,\:  k= 0\\
\frac{1}{2\pi^2p^2N} \text{ if } p+k= 0,\: p,k\neq 0\\
0 \text{ else}
\end{array}
\right.
\end{array}
\hskip-0.5cm
\begin{array}{rl}
\E[\widehat{\alpha}_p^N\widehat{\alpha}_k^N]&=\left\{
\begin{array}{l}
1+\frac{2}{3N} \text{ if } p=k=0\\
\frac{1}{\pi^2p^2N} \text{ if } p+k= 0,\: p,k\neq 0\\
0 \text{ else}
\end{array}
\right.\\
\widehat{\widetilde{\beta}}_{p,k}^N&=\left\{
\begin{array}{l}
\frac{1}{2\pi^2k^2N} \text{ if } p=0,\:  k\neq 0\\
\frac{-1}{2\pi^2p^2N} \text{ if } p\neq 0,\:  k= 0\\
0 \text{ else}
\end{array}
\right.\\
\end{array}
$$
\end{multicols}%
\noindent 
The definition and construction of these random variables is further discussed in Section \ref{section:probabilistic_properties} and Section \ref{section:stiff_integrators}.

\begin{algorithm}[H]
\renewcommand{\thealgorithm}{\methodA}
\caption{(Explicit integrator of weak order two in $H=N\varepsilon$ to approximate the solution of equation \eqref{equation:NLS} at times $\varepsilon T_{Nm}$ for $m=0, 1, 2,\ldots$)
}
\label{algorithm:weak_order_2}
\begin{algorithmic}
\STATE $Y_0=X_0$
\FOR{$m\geq 0$}
\STATE 
\begin{equation}
\label{equation:order_2_scheme}
Y_{m+1} = Y_m
+H \sum_{k=-K_t /2}^{K_t /2-1} c_k^0(Y_m) \widehat{\alpha}_k^N
+H^2 \sum_{p,k=-K_t /2}^{K_t /2-1} c_p^1(Y_m) (c_k^0(Y_m)) \widehat{\beta}_{p,k}^N
\end{equation}
\ENDFOR
\end{algorithmic}
\end{algorithm}

\begin{algorithm}
\renewcommand{\thealgorithm}{\methodB}
\caption{(Geometric integrator of weak order two in $H=N\varepsilon$ to approximate the solution of equation \eqref{equation:NLS} at times $\varepsilon T_{Nm}$ for $m=0, 1, 2,\ldots$ while preserving quadratic invariants)
}
\label{algorithm:geometric_weak_order_2}
\begin{algorithmic}
\STATE $Y_0=X_0$
\FOR{$m\geq 0$}
\STATE
\vspace{-5 mm}
\begin{align}
\label{equation:geometric_order_2_scheme}
Y_{m+1} &= Y_m
+H \sum_{k=-K_t /2}^{K_t /2-1} c_k^0\left(\frac{Y_m+Y_{m+1}}{2}\right) \widehat{\alpha}_k^N\\
&+H^2 \sum_{p,k=-K_t /2}^{K_t /2-1} c_p^1\left(\frac{Y_m+Y_{m+1}}{2}\right)\left(c_k^0\left(\frac{Y_m+Y_{m+1}}{2}\right)\right) \widehat{\widetilde{\beta}}_{p,k}^N \nonumber
\end{align}
\ENDFOR
\end{algorithmic}
\end{algorithm}

\begin{remark}
One could apply a Newton iteration to solve the implicit equation \eqref{equation:geometric_order_2_scheme} in Method \ref{algorithm:geometric_weak_order_2}.
However a few fixed point iterations are sufficient (see discussion in \cite[Chap.\ts VIII]{Hairer06gni} for non-stiff implicit methods).
Indeed, since the Lipschitz constant of the iterated map has size $\OO(H)$, the convergence rate of the fixed point iterations is independent of the smallness of $\varepsilon$. 
\end{remark}

\begin{remark}
We observe that $\widehat{\beta}_{p,k}^N$ and $\widehat{\widetilde{\beta}}_{p,k}^N$ are always zero except when $p=0$, $k=0$ or $p+ k=0$. Thus the computational cost of one step of Methods \ref{algorithm:weak_order_2} and \ref{algorithm:geometric_weak_order_2} grows linearly in the number of modes in \eqref{equation:def_g_theta}.
\end{remark}

\section{Analysis and asymptotic expansion of the exact solution}
\label{section:construction_algorithm}

In this section, we first obtain a local expansion of the solution of \eqref{equation:NLS} and then evaluate it at particular random times to deal with the highly-oscillatory patterns of the exact solution. Finally we derive from this expansion an asymptotic limit for equation \eqref{equation:NLS} when $\varepsilon\rightarrow 0$.

\subsection{Asymptotic expansion of the exact solution}
\label{section:asymptotic_expansion}

Instead of studying directly equation \eqref{equation:NLS}, we apply the change of variable $t\leftarrow\varepsilon^{-1} t$ to obtain the following equation, whose solution satisfies $Y(t)=X(\varepsilon t)$ with $X$ solution of \eqref{equation:NLS},
\begin{equation}
\label{equation:NLS_rescaled}
dY(t)=A  Y(t) \circ d\widetilde{W}(t)+\varepsilon F(Y(t)) dt, \ Y(0)=X_0,
\end{equation}
where we denote for simplicity the Brownian motion $\widetilde{W}(t)=\varepsilon^{-1/2} W(\varepsilon t)$ again by $W$.
We introduce the following assumption which guaranties in particular global existence and uniqueness of the solution.
\begin{ass}
\label{assumption:F_Lipschitz}
The function $F$ is globally Lipschitz continuous and lies in $\CC^3_P$, i.e.\ts there exists constants $L$, $C$, $K>0$ such that for all $y$, $y_1$, $y_2\in\R^d$
\begin{equation}
\label{equation:regularity_F}
\abs{F(y_1)-F(y_2)}\leq L\abs{y_1-y_2} \qquad \abs{F^{(i)}(y)}\leq C (1+\abs{y}^K),\ i\in \{0,1,2,3\}.
\end{equation}
Also the initial condition $X_0$ has bounded moments, that is $\E[\abs{X_0}^p]<\infty$ for $p\geq 0$.
\end{ass}
Therefore we denote $\varphi_{\varepsilon,t}(X_0)=Y(t)$ the solution of equation \eqref{equation:NLS_rescaled} and focus in the rest of the paper on the approximation of $\varphi_{\varepsilon,t}(y)$ at times $t=\OO(\varepsilon^{-1})$.
The variation of constants formula yields
\begin{equation}
\label{equation:variation_constants}
\varphi_{\varepsilon,t}(y)=e^{AW(t)}y+\varepsilon \int_0^t e^{A(W(t)-W(s))} F(\varphi_{\varepsilon,s}(y)) ds.
\end{equation}
We deduce the following regularity properties.
\begin{lemma}
\label{lemma:regularity_varphi}
Under Assumption \ref{assumption:F_Lipschitz}, the following estimates hold for all $y$, $y_1$, $y_2\in\R^d$, where $C$ and $K$ are independent of $\varepsilon$ and $t$,
\begin{enumerate}
\item $\abs{\varphi_{\varepsilon,t}(y_1)-\varphi_{\varepsilon,t}(y_2)}\leq C\abs{y_1-y_2}e^{C\varepsilon t}$,
\item $\abs{\varphi_{\varepsilon,t}(y)}\leq C(1+\abs{y})e^{C\varepsilon t}$,
\item $\varphi_{\varepsilon,t}(y)$ is $\CC^3$ in $y$ and $\abs{\varphi_{\varepsilon,t}^{(i)}(y)}\leq C(\varepsilon t)^{i-1}(1+\abs{y}^K)e^{C\varepsilon t}$ for $i\in {1,2,3}$.
\end{enumerate}
\end{lemma}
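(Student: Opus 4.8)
The plan is to read everything off the variation of constants formula \eqref{equation:variation_constants}, the engine being that $e^{A\theta}$ is bounded uniformly in $\theta\in\R$. Indeed, since $e^A=\Id$ with $A$ diagonalizable and all eigenvalues in $2i\pi\Z$, the eigenvalues of $e^{A\theta}$ lie on the unit circle for every real $\theta$, so there is a constant $M\geq 1$ with $\abs{e^{A\theta}}\leq M$ for all $\theta$. This is exactly what lets the unbounded $W(t)$ disappear from the estimates: in \eqref{equation:variation_constants} every occurrence of $W$ sits inside such an exponential, and the only genuinely growing factor left is the prefactor $\varepsilon$ in front of the drift integral, which is what ultimately produces the robust rate $e^{C\varepsilon t}$ and the powers of $\varepsilon t$.

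For items 1 and 2 I would apply Grönwall directly to \eqref{equation:variation_constants}. Subtracting the formula for two initial data, taking norms, and using the Lipschitz bound on $F$ from Assumption \ref{assumption:F_Lipschitz} gives
\[
\abs{\varphi_{\varepsilon,t}(y_1)-\varphi_{\varepsilon,t}(y_2)}\leq M\abs{y_1-y_2}+\varepsilon M L\int_0^t\abs{\varphi_{\varepsilon,s}(y_1)-\varphi_{\varepsilon,s}(y_2)}\,ds,
\]
so Grönwall yields item 1 with $C=\max(M,ML)$. For item 2, I would bound $\abs{F(x)}\leq\abs{F(0)}+L\abs{x}$, insert it into \eqref{equation:variation_constants}, and use $\varepsilon t\leq e^{\varepsilon t}$ to absorb the inhomogeneous term $\varepsilon\abs{F(0)}t$ into $C(1+\abs{y})e^{\varepsilon t}$ before a second Grönwall. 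In both cases the Grönwall rate is proportional to $\varepsilon$, which is precisely what gives $e^{C\varepsilon t}$ rather than $e^{Ct}$.

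Item 3 is the substantial one. First I would justify that $\varphi_{\varepsilon,t}$ is $\CC^3$ in $y$: under Assumption \ref{assumption:F_Lipschitz} the flow is thrice differentiable in the initial datum, with the derivatives $\eta=\varphi'_{\varepsilon,t}$, $\zeta=\varphi''_{\varepsilon,t}$, $\xi=\varphi'''_{\varepsilon,t}$ solving the variational integral equations obtained by differentiating \eqref{equation:variation_constants} under the integral sign; the a priori bounds I am about to derive simultaneously legitimate this differentiation. The decisive structural point is that for $i\geq 2$ the linear term $e^{AW(t)}y$ contributes nothing, so $\varphi^{(i)}_{\varepsilon,t}$ is represented purely by a drift integral carrying the factor $\varepsilon$, the source of the smallness. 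Differentiating once gives $\eta(t)=e^{AW(t)}+\varepsilon\int_0^t e^{A(W(t)-W(s))}F'(\varphi_{\varepsilon,s})\eta(s)\,ds$; since global Lipschitzness makes $\abs{F'}\leq L$ bounded, Grönwall gives $\abs{\eta(t)}\leq Me^{ML\varepsilon t}$, i.e.\ item 3 for $i=1$. Differentiating again produces, by the chain rule, a forcing $\varepsilon\int_0^t(\cdots)F''(\varphi_{\varepsilon,s})(\eta,\eta)\,ds$ together with the linear term $\varepsilon\int_0^t(\cdots)F'(\varphi_{\varepsilon,s})\zeta\,ds$; bounding $\abs{F''}$ polynomially, controlling $\abs{\varphi_{\varepsilon,s}}$ by item 2 and $\abs{\eta}$ by the previous step, the forcing is $\OO(\varepsilon t)$ and a final Grönwall with kernel still $\abs{F'}\leq L$ closes item 3 for $i=2$. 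I would then iterate this argument once more for $\xi$, feeding in the bounds already obtained for $\eta$ and $\zeta$.

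The \emph{main obstacle} is the bookkeeping at the higher orders. The Faà di Bruno expansion of $\frac{d^i}{dy^i}F(\varphi_{\varepsilon,s})$ contains several products of lower-order derivatives of $\varphi$, and one must track carefully how the explicit prefactor $\varepsilon$, the interval length $t$, the polynomial factors $(1+\abs{y}^K)$ coming from $\abs{F^{(j)}}\leq C(1+\abs{\cdot}^K)$, and the already-established $e^{C\varepsilon s}$ growth combine through the nested integrals to yield the stated $(\varepsilon t)^{i-1}(1+\abs{y}^K)e^{C\varepsilon t}$, with all constants independent of $\varepsilon$ and $t$. The two points requiring real care are keeping every Grönwall kernel equal to the bounded quantity $\abs{F'}\leq L$, so that no $\varepsilon^{-1}$ blow-up creeps in, and checking that raising $\abs{\varphi_{\varepsilon,s}}$ to the power $K$ via item 2 only enlarges the constant $K$ and the exponent in $e^{C\varepsilon s}$, never the structural power of $\varepsilon t$ that each additional drift integral contributes.
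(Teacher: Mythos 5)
Your proposal is correct and follows essentially the same route as the paper: Grönwall applied to the variation of constants formula \eqref{equation:variation_constants} for items 1 and 2 (exploiting the uniform bound on the periodic map $\theta\mapsto e^{A\theta}$ so that the Grönwall kernel is $L\varepsilon$), and for item 3 differentiation of the integral equation with the key observation that $\abs{F'}\leq L$ keeps every Grönwall kernel bounded while the polynomially growing $F''$, $F'''$ only enter the forcing terms, each nested drift integral contributing one factor of $\varepsilon t$.
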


The proof is postponed to the appendices. It mainly relies on the Gronwall theorem and the boundedness of the one-periodic function $\theta \mapsto e^{\theta A}$.
Using a local expansion of the solution of \eqref{equation:NLS_rescaled} in $\varepsilon$, we define the following first and second order approximations of $\varphi_{\varepsilon,t}(y)$,
\begin{align}
\label{equation:strong_Taylor_development}
\psi_{\varepsilon,t}^1(y)&=e^{AW(t)}y+\varepsilon e^{AW(t)} \int_0^{t} e^{-AW(s)} F(e^{AW(s)}y) ds\\
\psi_{\varepsilon,t}^2(y)&=\psi_{\varepsilon,t}^1(y) + \varepsilon^2 e^{AW(t)} \int_0^{t} e^{-AW(s)}  \nonumber \\
&\cdot F'(e^{AW(s)}y)\left( e^{AW(s)} \int_0^{s} e^{-AW(r)} F(e^{AW(r)}y) dr \right)ds. \nonumber
\end{align}

\begin{proposition}[Local expansion]
\label{proposition:strong_Taylor_development}
Under Assumption \ref{assumption:F_Lipschitz}, for all $y\in\R^d$, $j\in \{1,2\}$ and $t\geq 0$, there exists $C$ and $K$ two positive constants independent of $\varepsilon$ and $t$ such that
$$\abs{\varphi_{\varepsilon,t}(y)-\psi_{\varepsilon,t}^j(y)}\leq C(1+\abs{y}^K)e^{C\varepsilon t}(\varepsilon t)^{j+1}.$$
\end{proposition}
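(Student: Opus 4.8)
The plan is to prove both estimates (the cases $j=1$ and $j=2$) by a short bootstrapping argument built on the variation of constants formula \eqref{equation:variation_constants} and the a priori bounds of Lemma \ref{lemma:regularity_varphi}. The essential analytic input is that the one-periodic map $\theta\mapsto e^{A\theta}$ is continuous, hence bounded: there is a constant $M$ with $\abs{e^{A\theta}y}\leq M\abs{y}$ for all $\theta\in\R$, $y\in\R^d$. Combined with the identity $e^{A(W(t)-W(s))}=e^{AW(t)}e^{-AW(s)}$, this lets me pull the propagator out of every integral in \eqref{equation:variation_constants} and \eqref{equation:strong_Taylor_development} and bound its contribution by $M^2$, after which everything reduces to Gronwall-type integral estimates and the elementary inequality $\int_0^t s^{j}e^{C\varepsilon s}\,ds\leq t^{j+1}e^{C\varepsilon t}$.

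First I would establish the zeroth order estimate as a base case. Subtracting $e^{AW(t)}y$ from \eqref{equation:variation_constants}, using the polynomial growth of $F$ from Assumption \ref{assumption:F_Lipschitz} together with the moment bound $\abs{\varphi_{\varepsilon,s}(y)}\leq C(1+\abs{y})e^{C\varepsilon s}$ of Lemma \ref{lemma:regularity_varphi}, gives $\abs{F(\varphi_{\varepsilon,s}(y))}\leq C(1+\abs{y}^K)e^{C\varepsilon s}$, whence
$$
\abs{\varphi_{\varepsilon,t}(y)-e^{AW(t)}y}\leq \varepsilon M^2\int_0^t \abs{F(\varphi_{\varepsilon,s}(y))}\,ds\leq C(1+\abs{y}^K)e^{C\varepsilon t}(\varepsilon t).
$$
Next, for $j=1$, I subtract $\psi^1_{\varepsilon,t}(y)$ from \eqref{equation:variation_constants} and factor out the propagator to get
$$
\varphi_{\varepsilon,t}(y)-\psi^1_{\varepsilon,t}(y)=\varepsilon\, e^{AW(t)}\int_0^t e^{-AW(s)}\big(F(\varphi_{\varepsilon,s}(y))-F(e^{AW(s)}y)\big)\,ds,
$$
so the Lipschitz bound on $F$ turns the integrand into $L\,\abs{\varphi_{\varepsilon,s}(y)-e^{AW(s)}y}$; inserting the base case and using $\int_0^t s\,e^{C\varepsilon s}\,ds\leq t^2 e^{C\varepsilon t}$ yields the bound $C(1+\abs{y}^K)e^{C\varepsilon t}(\varepsilon t)^2$, which is exactly the case $j=1$.

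Finally, for $j=2$, I expand $F$ to second order around $e^{AW(s)}y$ by Taylor's theorem with integral remainder, writing $F(\varphi_{\varepsilon,s}(y))-F(e^{AW(s)}y)=F'(e^{AW(s)}y)(\varphi_{\varepsilon,s}(y)-e^{AW(s)}y)+R_s$. Splitting $\varphi_{\varepsilon,s}(y)-e^{AW(s)}y=(\psi^1_{\varepsilon,s}(y)-e^{AW(s)}y)+(\varphi_{\varepsilon,s}(y)-\psi^1_{\varepsilon,s}(y))$ and observing that $\varepsilon\,F'(e^{AW(s)}y)(\psi^1_{\varepsilon,s}(y)-e^{AW(s)}y)$ is precisely the integrand of the second order correction defining $\psi^2_{\varepsilon,t}(y)$ in \eqref{equation:strong_Taylor_development}, the difference $\varphi_{\varepsilon,t}(y)-\psi^2_{\varepsilon,t}(y)$ collapses to the $\varepsilon$-weighted integral of $F'(e^{AW(s)}y)(\varphi_{\varepsilon,s}(y)-\psi^1_{\varepsilon,s}(y))+R_s$. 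The first term is controlled by the polynomial growth of $F'$ times the $j=1$ bound just proved, and $R_s$ by the polynomial growth of $F''$ times $\abs{\varphi_{\varepsilon,s}(y)-e^{AW(s)}y}^2$, i.e.\ts the square of the base case; both are $\OO((\varepsilon s)^2)$, and one last integration $\int_0^t s^2 e^{C\varepsilon s}\,ds\leq t^3 e^{C\varepsilon t}$ gives the case $j=2$.

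The calculation is routine; the only delicate point is the bookkeeping of the polynomial factor $(1+\abs{y}^K)$ and the exponential $e^{C\varepsilon t}$ through each substitution, so that the final $C$ and $K$ stay uniform in $\varepsilon$ and $t$. In particular, bounding $R_s$ requires controlling $F''$ uniformly over the whole segment joining $e^{AW(s)}y$ and $\varphi_{\varepsilon,s}(y)$, which is where the growth estimate of Lemma \ref{lemma:regularity_varphi} and the uniform bound on $e^{A\theta}$ are used together; this is the main obstacle to a clean write-up.
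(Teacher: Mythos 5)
Your argument is correct and follows essentially the same route as the paper: the base estimate $\abs{\varphi_{\varepsilon,t}(y)-e^{AW(t)}y}=\OO(\varepsilon t)$, Lipschitz continuity of $F$ for $j=1$, and a first-order Taylor expansion of $F$ about $e^{AW(s)}y$ with an $F''$-controlled remainder for $j=2$. The only (cosmetic) difference is that the paper inserts the intermediate Picard iterate $\widetilde{\psi}^2_{\varepsilon,t}(y)=e^{AW(t)}y+\varepsilon e^{AW(t)}\int_0^t e^{-AW(s)}F(\psi^1_{\varepsilon,s}(y))\,ds$ and splits the error in two, whereas you expand $F(\varphi_{\varepsilon,s}(y))$ directly and absorb the cross term $F'(e^{AW(s)}y)(\varphi_{\varepsilon,s}(y)-\psi^1_{\varepsilon,s}(y))$ using the $j=1$ bound; both handle the same terms with the same estimates.
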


The functions $\psi_{\varepsilon,t}^j$ satisfy the following straightforward inequalities proved with similar arguments as for Lemma \ref{lemma:regularity_varphi}.
\begin{lemma}
With the assumptions and notations of Proposition \ref{proposition:strong_Taylor_development}, the following estimates hold for all $y\in\R^d$, where $C$ and $K$ are independent of $\varepsilon$ and $t$,
\begin{align}
\label{equation:Taylor_bound_1}
\abs{\psi_{\varepsilon,t}^1(y)}&\leq C(1+\abs{y})e^{C\varepsilon t},\\
\label{equation:Taylor_bound_2}
\abs{\psi_{\varepsilon,t}^2(y)}&\leq C(1+\abs{y}^K)e^{C\varepsilon t},\\
\label{equation:Taylor_bound_3}
\abs{\psi_{\varepsilon,t}^2(y)-e^{AW(t)}y}&\leq C(1+\abs{y}^K)(\varepsilon t)e^{C\varepsilon t}.
\end{align}
\end{lemma}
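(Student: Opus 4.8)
The plan is to exploit the single structural fact that makes all three bounds deterministic and independent of $\varepsilon$ and $t$: since $e^A=\Id$, the map $\theta\mapsto e^{A\theta}$ is $1$-periodic and continuous, hence there is a constant $M>0$ with $\abs{e^{A\theta}}\leq M$ and $\abs{e^{-A\theta}}\leq M$ for all $\theta\in\R$. In particular every factor $e^{\pm AW(s)}$ appearing in \eqref{equation:strong_Taylor_development} is bounded by $M$ pathwise, uniformly in $s$. Because $\psi^1_{\varepsilon,t}$ and $\psi^2_{\varepsilon,t}$ are given by \emph{explicit} iterated integrals (the integrands involve $e^{AW(s)}y$ rather than the unknown flow $\varphi_{\varepsilon,s}$), no Gronwall argument is needed here, unlike in Lemma~\ref{lemma:regularity_varphi}; the three inequalities follow from the triangle inequality together with the growth bounds on $F$ and $F'$ from Assumption~\ref{assumption:F_Lipschitz}.

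For \eqref{equation:Taylor_bound_1}, I would bound the two terms of $\psi^1_{\varepsilon,t}$ separately. The leading term gives $\abs{e^{AW(t)}y}\leq M\abs{y}$. For the integral, the global Lipschitz assumption yields the linear growth $\abs{F(x)}\leq\abs{F(0)}+L\abs{x}\leq C(1+\abs{x})$, so that $\abs{F(e^{AW(s)}y)}\leq C(1+M\abs{y})\leq C(1+\abs{y})$; inserting the bound $M$ for the two exponentials and integrating over $[0,t]$ produces a factor $\varepsilon t$, giving $\abs{\psi^1_{\varepsilon,t}(y)}\leq C(1+\abs{y})(1+\varepsilon t)$. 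Absorbing $1+\varepsilon t\leq e^{\varepsilon t}$ into the exponential yields \eqref{equation:Taylor_bound_1}. Note that only a linear power of $\abs{y}$ appears, which is why this first bound carries no exponent $K$.

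For \eqref{equation:Taylor_bound_2} and \eqref{equation:Taylor_bound_3} the extra term of $\psi^2_{\varepsilon,t}$ is estimated in the same spirit, now using the polynomial bound $\abs{F'(x)}\leq C(1+\abs{x}^K)$. Bounding the inner integral $\int_0^s e^{-AW(r)}F(e^{AW(r)}y)\,dr$ by $Cs(1+\abs{y})$ as above and applying $\abs{F'(x)(v)}\leq\abs{F'(x)}\abs{v}\leq C(1+\abs{x}^K)\abs{v}$ with $x=e^{AW(s)}y$ and $v$ the inner vector integral, the double integral is controlled by $C(\varepsilon t)^2(1+\abs{y}^{K+1})$, the two time integrations each contributing a factor $t$. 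Combining this with \eqref{equation:Taylor_bound_1} and enlarging $K$ gives \eqref{equation:Taylor_bound_2} after absorbing $(\varepsilon t)^2\leq e^{C\varepsilon t}$. Finally, for \eqref{equation:Taylor_bound_3} I would not discard the order-$\varepsilon$ term: writing $\psi^2_{\varepsilon,t}(y)-e^{AW(t)}y$ as the sum of its order-$\varepsilon$ and order-$\varepsilon^2$ contributions, the estimates above give a bound $C(\varepsilon t)(1+\abs{y})+C(\varepsilon t)^2(1+\abs{y}^{K+1})$; factoring out the common $(\varepsilon t)$ and using $(\varepsilon t)\leq e^{C\varepsilon t}$ on the remaining factor yields the claimed $C(1+\abs{y}^K)(\varepsilon t)e^{C\varepsilon t}$.

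The computations are entirely routine; the only point that requires care---and the sole reason the constants are uniform in $\varepsilon$ and $t$---is the uniform boundedness of $\theta\mapsto e^{\pm A\theta}$ coming from the periodicity hypothesis $e^A=\Id$. I anticipate no genuine obstacle, merely bookkeeping of the powers of $\abs{y}$ and of $(\varepsilon t)$ and their absorption into a single exponential factor.
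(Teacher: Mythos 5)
Your proposal is correct and is exactly the ``straightforward'' argument the paper has in mind (the paper omits the proof, saying only that it uses similar arguments to Lemma \ref{lemma:regularity_varphi}, namely the uniform boundedness of the $1$-periodic map $\theta\mapsto e^{A\theta}$ and the growth bounds on $F$ and $F'$). Your observation that no Gronwall step is needed because $\psi^1_{\varepsilon,t}$ and $\psi^2_{\varepsilon,t}$ are explicit iterated integrals, unlike the flow $\varphi_{\varepsilon,t}$, is accurate, and the bookkeeping of the powers of $\abs{y}$ and $(\varepsilon t)$ is handled correctly.
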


\begin{proof}[Proof of Proposition \ref{proposition:strong_Taylor_development}]
Using Assumption \ref{assumption:F_Lipschitz}, we get
$$\abs{\varphi_{\varepsilon,t}(y)-\psi_{\varepsilon,t}^1(y)}\leq L\varepsilon \int_0^{t} \abs{\varphi_{\varepsilon,s}(y)-e^{AW(s)}y} ds.$$
Then Lemma \ref{lemma:regularity_varphi} yields
\begin{align*}
\abs{\varphi_{\varepsilon,s}(y)-e^{AW(s)}y}&\leq C\varepsilon \int_0^{s} \abs{F(\varphi_{\varepsilon,r}(y))} dr
\leq C\varepsilon \int_0^{s} (1+\abs{\varphi_{\varepsilon,r}(y)}) dr\\
&\leq C\varepsilon \int_0^{s} (1+C(1+\abs{y})e^{C\varepsilon r}) dr
\leq C(1+\abs{y})e^{C\varepsilon s}(\varepsilon s).
\end{align*}
We deduce $\abs{\varphi_{\varepsilon,t}(y)-\psi_{\varepsilon,t}^1(y)}\leq C(1+\abs{y})e^{C\varepsilon t}(\varepsilon t)^2.$

For $j=2$, we first denote
$$\widetilde{\psi}_{\varepsilon,t}^2(y)=e^{AW(t)}y+\varepsilon e^{AW(t)} \int_0^{t} e^{-AW(s)} F(\psi_{\varepsilon,s}^1(y)) ds.$$
With the same arguments we used for $j=1$ and inequality \eqref{equation:Taylor_bound_1}, we have
$$\abs{\varphi_{\varepsilon,t}(y)-\widetilde{\psi}_{\varepsilon,t}^2(y)}\leq C(1+\abs{y}^K)e^{C\varepsilon t}(\varepsilon t)^3.$$
It is sufficient to prove that $\abs{\psi_{\varepsilon,t}^2(y)-\widetilde{\psi}_{\varepsilon,t}^2(y)}\leq C(1+\abs{y}^K)e^{C\varepsilon t}(\varepsilon t)^3$.
A Taylor expansion of $F(\psi_{\varepsilon,s}^1(y))$ in $\varepsilon$ gives
$$F(\psi_{\varepsilon,s}^1(y))=F(e^{AW(s)}y)+\varepsilon F'(e^{AW(s)}y)\left( e^{AW(s)} \int_0^{s} e^{-AW(r)} F(e^{AW(r)}y) dr \right) +R_{\varepsilon,s}.$$
The remainder $R_{\varepsilon,s}$ satisfies
$$\abs{R_{\varepsilon,s}}\leq C \varepsilon^2 \sup_{x \in [e^{AW(s)}y , \psi_{\varepsilon,s}^1(y)]} \norme{F''(x)} \left| e^{AW(s)} \int_0^{s} e^{-AW(r)} F(e^{AW(r)}y) dr \right|^2.$$
Then, using the polynomial growth of $F''$ and inequality \eqref{equation:Taylor_bound_1}, we get
$$\abs{R_{\varepsilon,s}}\leq C(1+\abs{e^{AW(s)}y}^K+\abs{\psi_{\varepsilon,s}^1(y)}^K) (\varepsilon s)^2 e^{C\varepsilon s}
\leq C(1+\abs{y}^K) (\varepsilon s)^2 e^{C\varepsilon s}.$$
Hence the result.
\end{proof}


We shall prove in Section \ref{section:limit_model} that the function $\psi^2_{\varepsilon,t}$ in \eqref{equation:strong_Taylor_development} evaluated at the revolution times $T_N$ (defined in \eqref{equation:definition_revolution_times}) yields a strong order 2 approximation in $H=\varepsilon N$.

\begin{remark}
If we replace the Brownian motion $W$ in \eqref{equation:strong_Taylor_development} by a piecewise linear function $W_\tau$ defined by
\begin{equation}
\label{equation:W_piecewise_linear}
W_\tau=\left(1-\frac{t}{\tau}+i\right)W_i+\left(\frac{t}{\tau}-i\right)W_{i+1} \text{ for } i \tau \leq t \leq (i+1)\tau,
\end{equation}
where $W_0=0$ and $W_{i+1}=W{i}+\sqrt{\tau}\xi_i$ with $(\xi_i)_i$ a family of independent standard Gaussian random variables, then it can be shown that we obtain an integrator of strong order two in $\varepsilon t$. However the cost of a standard method computing an approximation of the integrals of equation \eqref{equation:strong_Taylor_development} by replacing $W$ with $W_\tau$ is in $\OO(t^2/\tau^2)$, which makes this method tremendously expensive for $t=\OO(\varepsilon^{-1})$.
This is why we develop in Section \ref{section:stiff_integrators} weak integrators based on a weak approximation of equation \eqref{equation:strong_Taylor_development} with a cost independent of $t$.
We shall replace stochastic integrals by appropriate discrete random variables in order not to simulate any expensive Brownian path $W$.
\end{remark}

\subsection{Properties of the revolution times}
\label{section:probabilistic_properties}

In this section, we study some properties linked to the revolution times $T_N$ that will be useful for the analysis.
\begin{proposition}
\label{proposition:properties_T1}
The revolution times $T_N$ defined in \eqref{equation:definition_revolution_times} are positive and finite almost surely. Their differences $(T_{N+1}-T_N)_{N\geq 0}$ are independent identically distributed random variables (same law as $T_1$). The Laplace transform $\E[e^{zT_1}]$ of $T_1$ exists and is analytic for $\Real(z)<\frac{\pi^2}{8}$. In addition, for $x\in\left[0,\frac{\pi^2}{8}\right[$, $\E[e^{xT_1}]=\frac{1}{\cos(\sqrt{2x})}$.
The variable $T_1$ has bounded moments and they are given by
\begin{equation}
\label{equation:properties_T1_moments}
\E[T_1^k]=\frac{(-2)^kk!}{(2k)!}\sum_{j=1}^p (-1)^j \sum_{\underset{n_i\in\N^*}{n_1+\cdots+n_j=p}} \binom{2p}{2n_1,\ldots,2n_j}.
\end{equation}
In particular, $\E[T_1]=1$, $\E[T_1^2]=\frac{5}{3}$ and $\Var(T_1)=\frac{2}{3}$.
Finally, for a fixed $\varepsilon_0\in \left]0,\frac{\pi^2}{16}\right[$, for all $\varepsilon \in]0,\varepsilon_0]$ and $p\geq 0$, we have the estimate
\begin{equation}
\label{equation:properties_T1_exponential_estimate}
\E[e^{\varepsilon T_N}(\varepsilon T_N)^p] \leq Ce^{C \varepsilon N}(\varepsilon N)^p.
\end{equation}
\end{proposition}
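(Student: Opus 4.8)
The plan is to recognise, after the time rescaling $t\leftarrow\varepsilon^{-1}t$ used to pass from \eqref{equation:NLS} to \eqref{equation:NLS_rescaled}, that the revolution times defined in \eqref{equation:definition_revolution_times} are exactly the successive first exit times of a standard Brownian motion from the interval $(-1,1)$: writing $W$ for the rescaled process, $T_{N+1}-T_N=\inf\{s>0:\abs{W(T_N+s)-W(T_N)}\ge 1\}$. Positivity and almost sure finiteness are then classical, since a Brownian motion exits any bounded interval in finite time almost surely. The independence and identical distribution of the increments $(T_{N+1}-T_N)_{N\ge0}$ follow from the strong Markov property applied at the stopping time $T_N$: the shifted process $W(T_N+\cdot)-W(T_N)$ is a Brownian motion independent of $\mathcal{F}_{T_N}$, so each increment is an independent copy of $T_1=\inf\{s>0:\abs{W(s)}\ge1\}$.

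For the Laplace transform I would fix $0\le x<\pi^2/8$ and use It\^o's formula to check that $M_t=\cos(\sqrt{2x}\,W_t)e^{xt}$ is a local martingale (the drift cancels because $\cos$ solves $\frac{1}{2}g''=-x g$). On $\{t\le T_1\}$ one has $\abs{W_t}\le1$, hence $0<\cos(\sqrt{2x})\le\cos(\sqrt{2x}\,W_t)$ and $M_{t\wedge T_1}\ge0$; the stopped process is a nonnegative supermartingale, so Fatou's lemma gives $\cos(\sqrt{2x})\E[e^{xT_1}]\le 1$, already proving finiteness of the exponential moment. Since then $M_{t\wedge T_1}\le e^{xT_1}\in L^1$, dominated convergence upgrades this to the equality $\E[e^{xT_1}]=1/\cos(\sqrt{2x})$. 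Finiteness of $\E[e^{\Real(z)T_1}]$ for $\Real(z)<\pi^2/8$ makes $z\mapsto\E[e^{zT_1}]$ analytic on that half-plane, and since it coincides there with the analytic function $1/\cos(\sqrt{2z})$ (whose first singularity is at $z=\pi^2/8$), the two agree by uniqueness of analytic continuation.

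The moments then come from expanding the generating function near $0$. Writing $u=\sqrt{2x}$ and $\cos u=1+c$ with $c=\sum_{n\ge1}(-1)^n u^{2n}/(2n)!$, the geometric series $1/(1+c)=\sum_{j\ge0}(-1)^jc^j$ together with the multinomial expansion of $c^j$ give the coefficient of $x^p$ in $1/\cos(\sqrt{2x})$ as $\frac{(-2)^p}{(2p)!}\sum_{j=1}^p(-1)^j\sum_{n_1+\cdots+n_j=p}\binom{2p}{2n_1,\dots,2n_j}$; multiplying by $p!$ and matching with $\sum_p \E[T_1^p]x^p/p!$ yields \eqref{equation:properties_T1_moments}. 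Reading off the first two coefficients of the secant series gives $\E[T_1]=1$, $\E[T_1^2]=5/3$ and hence $\Var(T_1)=2/3$.

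For the final estimate \eqref{equation:properties_T1_exponential_estimate} I would use that $T_N=\sum_{i=1}^N(T_i-T_{i-1})$ is a sum of i.i.d.\ copies of $T_1$ and apply the Cauchy--Schwarz inequality, $\E[e^{\varepsilon T_N}(\varepsilon T_N)^p]\le\E[e^{2\varepsilon T_N}]^{1/2}\,\E[(\varepsilon T_N)^{2p}]^{1/2}$. The first factor equals $\cos(\sqrt{4\varepsilon})^{-N/2}$, which is finite precisely because $2\varepsilon<\pi^2/8$, that is $\varepsilon<\pi^2/16$ — this is where the restriction $\varepsilon_0<\pi^2/16$ enters — and since $\cos(\sqrt{4\varepsilon})^{-1}=1+2\varepsilon+\OO(\varepsilon^2)\le e^{C\varepsilon}$ on $[0,\varepsilon_0]$, it is bounded by $e^{C\varepsilon N}$. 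For the second factor, expanding the $2p$-th power of the i.i.d.\ sum and using the finiteness of all polynomial moments of $T_1$ shows $\E[T_N^{2p}]\le C_p N^{2p}$ for $N\ge1$ (the leading contribution being $N^{2p}$), so $\E[(\varepsilon T_N)^{2p}]^{1/2}\le C(\varepsilon N)^p$, and combining the two factors gives the claim. The main obstacle I expect is the Laplace transform step: carefully justifying the passage to the positive exponent $x>0$ (the martingale $M_t$ is unbounded in $t$, so one must argue by the supermartingale/Fatou bound followed by dominated convergence rather than a naive optional stopping), together with bookkeeping the multinomial expansion so as to land exactly on formula \eqref{equation:properties_T1_moments}.
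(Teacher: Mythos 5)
Your proof is correct and reaches the same conclusions, but it is more self-contained than the paper's at two points. For the first batch of properties and the Laplace transform, the paper simply cites \cite[Chap.\ts 2.3]{Revuz94cma} for $\E[e^{-xT_1}]=1/\cosh(\sqrt{2x})$ and obtains $\E[e^{xT_1}]=1/\cos(\sqrt{2x})$ by analytic continuation, whereas you rederive everything directly: the strong Markov property for the i.i.d.\ structure, and the exponential martingale $M_t=\cos(\sqrt{2x}\,W_t)e^{xt}$ with the supermartingale/Fatou bound followed by dominated convergence for the positive-exponent identity. This buys independence from the reference at the cost of one small point you should make explicit: Fatou plus domination only turns $\E[M_{t\wedge T_1}]\leq 1$ into $\E[M_{T_1}]\leq 1$, so to get the \emph{equality} $\cos(\sqrt{2x})\E[e^{xT_1}]=1$ you must observe that the stopped process is a true martingale (not merely a supermartingale), which holds here because the It\^o integrand $e^{xs}\sqrt{2x}\sin(\sqrt{2x}\,W_s)$ is bounded on $\{s\leq t\wedge T_1\}$, giving $\E[M_{t\wedge T_1}]=1$ for every $t$ before passing to the limit. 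The moment computation by expanding $1/\cos(\sqrt{2x})$ is exactly the paper's. For the final estimate both arguments start from the same Cauchy--Schwarz splitting; the paper then applies Jensen twice (to reduce $\E[e^{2\varepsilon T_1}]^{N/2}$ to a power of $\E[e^{2\varepsilon_0 T_1}]$ and to bound $\E[T_N^{2p}]$ by $N^{2p}\E[T_1^{2p}]$), while you instead substitute the explicit value $\E[e^{2\varepsilon T_1}]=1/\cos(\sqrt{4\varepsilon})\leq e^{C\varepsilon}$ and expand the i.i.d.\ sum; both routes correctly identify $\varepsilon_0<\pi^2/16$ as the threshold coming from $2\varepsilon<\pi^2/8$, and your version has the small advantage of making that threshold visible in a closed formula.
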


The law of the first revolution time $T_1$ has an analytic density but there is no closed formula for it. It can be numerically approximated accurately by inverting the Laplace transform.
In Figure \ref{figure:Law_TN}, we observe the convergence in law of $T_N$ to a Gaussian variable according to the central limit theorem.
\begin{figure}
	\begin{center}
		\includegraphics[scale=0.6]{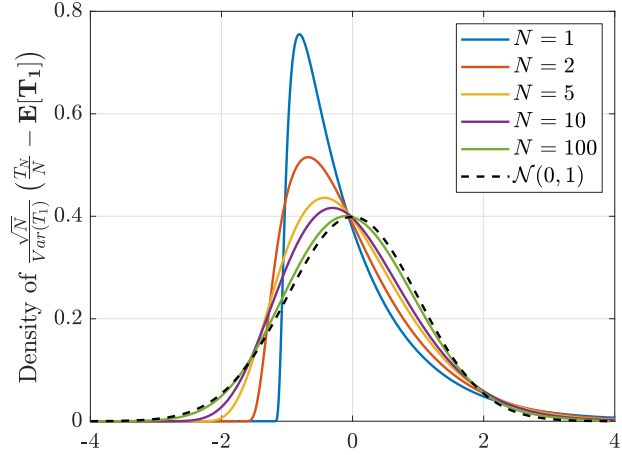}
		\caption{Convergence in law of $\frac{\sqrt{N}}{Var(T_1)}\left(\frac{T_N}{N}-\E[T_1]\right)$ to a standard Gaussian random variable.}
		\label{figure:Law_TN}
	\end{center}
\end{figure}

\begin{proof}[Proof of Proposition \ref{proposition:properties_T1}]
The first properties can be deduced from \cite[Chap.\ts 2.3]{Revuz94cma}, where the Laplace transform formula is obtained with an analytic continuation of the equality $\E[e^{-xT_1}]=\frac{1}{\cosh(\sqrt{2x})}$ for $x>0$.
Comparing the Taylor expansion of $\E[e^{xT_1}]$ and $\frac{1}{\cos(\sqrt{2x})}$ yields \eqref{equation:properties_T1_moments}.
The estimate \eqref{equation:properties_T1_exponential_estimate} is proved as follows
\begin{align*}
\E[e^{\varepsilon T_N}(\varepsilon T_N)^p] &\leq \E[e^{2\varepsilon T_N}]^{1/2} \E[(\varepsilon T_N)^{2p}]^{1/2}
= \E[e^{2\varepsilon T_1}]^{N/2} \varepsilon^p \E[T_N^{2p}]^{1/2}\\
&\leq \E[e^{2\varepsilon_0 T_1}]^{\varepsilon N/2\varepsilon_0} (\varepsilon N)^p \E[T_1^{2p}]^{1/2}
\leq C e^{C \varepsilon N} (\varepsilon N)^p,
\end{align*}
where we used first the Cauchy-Schwarz inequality and then twice the Jensen inequality.
\end{proof}

For developing an algorithm for the weak error, it is useful to know the moments of the random variables appearing in the discretization, that are costly to simulate numerically, in order to replace them with cheap discrete approximations with the same first and second moments. This is the goal of the following proposition.
\begin{proposition}
\label{proposition:moments_alpha_beta}
The following random variables
$$
\begin{array}{rl}
\alpha_k^N&=\frac{1}{N}\int_0^{T_N} e^{2i\pi k W(s)}ds\\
\beta_{p,k}^N&=\frac{1}{N^2} \int_0^{T_N} e^{2i\pi pW(s)}\int_0^{s} e^{2i\pi kW(r)} dr ds\\
\widetilde{\beta}_{p,k}^N&=\frac{1}{N^2} \int_0^{T_N} e^{2i\pi pW(s)} \left(
\int_0^{s} e^{2i\pi kW(r)} -\frac{1}{2} \int_0^{T_N} e^{2i\pi kW(r)} dr \right) ds
= \beta_{p,k}^N-\frac{\alpha_p^N\alpha_k^N}{2}
\end{array}
$$
satisfy
\vspace{-18pt}
\begin{multicols}{2}
$$
\begin{array}{rl}
\E[\alpha_k^N]&=\delta_k
=\left\{
\begin{array}{l}
1 \text{ if } k=0\\
0 \text{ else}
\end{array}
\right.\\
\E[\beta_{p,k}^N]&=\left\{
\begin{array}{l}
\frac{1}{2}+\frac{1}{3N} \text{ if } p=k=0\\
\frac{1}{2\pi^2k^2N} \text{ if } p=0,\:  k\neq 0\\
\frac{-1}{2\pi^2p^2N} \text{ if } p\neq 0,\:  k= 0\\
\frac{1}{2\pi^2p^2N} \text{ if } p+k= 0,\: p,k\neq 0\\
0 \text{ else}
\end{array}
\right.
\end{array}
\hskip-0.5cm
\begin{array}{rl}
\E[\alpha_p^N\alpha_k^N]&=\left\{
\begin{array}{l}
1+\frac{2}{3N} \text{ if } p=k=0\\
\frac{1}{\pi^2p^2N} \text{ if } p+k= 0,\: p,k\neq 0\\
0 \text{ else}
\end{array}
\right.\\
\E[\widetilde{\beta}_{p,k}^N]&=\left\{
\begin{array}{l}
\frac{1}{2\pi^2k^2N} \text{ if } p=0,\:  k\neq 0\\
\frac{-1}{2\pi^2p^2N} \text{ if } p\neq 0,\:  k= 0\\
0 \text{ else}
\end{array}
\right.\\
\end{array}
$$
\end{multicols}
\end{proposition}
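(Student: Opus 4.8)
The plan is to reduce every moment to a computation on a single excursion and then evaluate the latter with the Green's function of Brownian motion killed on exiting $(-1,1)$. The starting observation is that, in the rescaled variables, $W(T_N)$ is always an integer: indeed $W(T_1)=\pm1$ and, inductively, $W(T_j)=W(T_{j-1})\pm1\in\Z$. Hence $e^{2i\pi kW(T_{j-1})}=1$ for every $k\in\Z$, so the oscillatory factors reset at each revolution time. Combined with the strong Markov property, this shows that the shifted excursions $W^{(j)}(u)=W(T_{j-1}+u)-W(T_{j-1})$, $0\le u\le\tau_j:=T_j-T_{j-1}$, are i.i.d.\ copies of a Brownian motion run until it first leaves $(-1,1)$, and that
$$\int_{T_{j-1}}^{T_j}e^{2i\pi kW(s)}\,ds=\int_0^{\tau_j}e^{2i\pi kW^{(j)}(u)}\,du=:X_j^{(k)},$$
with the vectors $(X_j^{(k)})_k$ independent and identically distributed in $j$.

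First I would introduce the single-excursion quantities $I_k=\int_0^{T_1}e^{2i\pi kW(s)}\,ds$, $J_{p,k}=\E\big[\int_0^{T_1}e^{2i\pi pW(s)}\int_0^{s}e^{2i\pi kW(r)}\,dr\,ds\big]$ and $M_{p,k}=\E[I_pI_k]$. Splitting the double integral in $M_{p,k}$ along $\{r<s\}$ and $\{r>s\}$ gives $M_{p,k}=J_{p,k}+J_{k,p}$. Writing $\alpha_k^N=\frac1N\sum_jX_j^{(k)}$ and separating the $N$ diagonal from the $N(N-1)$ off-diagonal pairs yields $\E[\alpha_k^N]=\E[I_k]$ and $\E[\alpha_p^N\alpha_k^N]=\frac{N-1}{N}\delta_p\delta_k+\frac1N M_{p,k}$. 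For $\beta_{p,k}^N$ I would break the outer integral over $[T_{j-1},T_j]$ and split the inner integral as $\int_0^{T_{j-1}}+\int_{T_{j-1}}^{s}$; using $W(T_{j-1})\in\Z$, the first piece equals $\sum_{i<j}X_j^{(p)}X_i^{(k)}$ with independent factors, and the second is a single-excursion copy of $J_{p,k}$, giving $\E[\beta_{p,k}^N]=\frac{N-1}{2N}\delta_p\delta_k+\frac1N J_{p,k}$. The identity $\widetilde\beta_{p,k}^N=\beta_{p,k}^N-\tfrac12\alpha_p^N\alpha_k^N$ then cancels the $\delta$-terms and leaves $\E[\widetilde\beta_{p,k}^N]=\frac1{2N}(J_{p,k}-J_{k,p})$.

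It remains to compute $\E[I_k]$ and $J_{p,k}$ on a single excursion. Here I would use the expected occupation density of Brownian motion killed at the exit time of $(-1,1)$, i.e.\ the Green's function $G(x,y)=(1+\min(x,y))(1-\max(x,y))$, so that $\E_x[\int_0^{T_1}f(W_s)\,ds]=\int_{-1}^1G(x,y)f(y)\,dy$ and in particular $G(0,y)=1-\abs{y}$. Thus $\E[I_k]=\int_{-1}^1(1-\abs{y})e^{2i\pi ky}\,dy=\delta_k$, the Fourier coefficient of the tent function at the integer frequency $k$. For the nested integral, conditioning at the inner time via the strong Markov property gives
$$J_{p,k}=\int_{-1}^1G(0,x)\,e^{2i\pi kx}\,(Gh_p)(x)\,dx,\qquad (Gh_p)(x)=\E_x\Big[\int_0^{T_1}e^{2i\pi pW_s}\,ds\Big],$$
where $u=Gh_p$ solves $-\tfrac12u''=e^{2i\pi p\,\cdot}$ on $(-1,1)$ with $u(\pm1)=0$, hence $(Gh_0)(x)=1-x^2$ and $(Gh_p)(x)=\frac{e^{2i\pi px}-1}{2\pi^2p^2}$ for $p\neq0$. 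Substituting these and again recognising tent-function coefficients $\int_{-1}^1(1-\abs{x})e^{2i\pi mx}\,dx=\delta_m$ reduces every case to an elementary integral, yielding $J_{0,0}=\frac56$, $J_{0,k}=\frac1{2\pi^2k^2}$, $J_{p,0}=\frac{-1}{2\pi^2p^2}$, $J_{p,-p}=\frac1{2\pi^2p^2}$ and $J_{p,k}=0$ otherwise. Feeding these (together with $M_{p,k}=J_{p,k}+J_{k,p}$) back into the structural formulas of the second paragraph produces all the stated values.

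The main obstacle I anticipate is the combinatorial bookkeeping of the excursion decomposition, especially for the nested time integral in $\beta_{p,k}^N$: one must split the inner integral at $T_{j-1}$, identify the past part as independent of the current excursion, and keep exact track of the $1/N$ and $1/N^2$ normalisations, which is what produces the precise constants such as $\frac12+\frac1{3N}$. The one genuinely analytic step is the occupation-measure evaluation of $J_{p,k}$; once the Markov-property reduction to $\int_{-1}^1G(0,x)e^{2i\pi kx}(Gh_p)(x)\,dx$ and the explicit solution of the boundary value problem for $Gh_p$ are in place, everything else amounts to computing Fourier coefficients of piecewise-polynomial functions on $(-1,1)$.
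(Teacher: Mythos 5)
Your proof is correct, and it takes a genuinely different route from the paper's. The paper applies the It\^o formula to $e^{2i\pi kW(s)}$ to rewrite each Lebesgue integral as a stochastic integral plus boundary terms, uses the key fact $W(T_N)\in\Z$ to kill the boundary contribution, and then invokes Doob's optional stopping theorem (plus stochastic Fubini and the algebraic identity $\beta_{p,k}^N+\beta_{k,p}^N=\alpha_p^N\alpha_k^N$) to extract the expectations. You instead exploit $W(T_{j-1})\in\Z$ to reset the oscillatory phases, decompose $[0,T_N]$ into i.i.d.\ excursions, and reduce every moment to single-excursion quantities evaluated with the Green's function $G(0,y)=1-\abs{y}$ of Brownian motion killed on exiting $(-1,1)$; the nested integral is handled by the Markov property together with the explicit solution of $-\tfrac12 u''=e^{2i\pi p\cdot}$ with Dirichlet data. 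I checked your structural formulas ($\E[\alpha_p^N\alpha_k^N]=\tfrac{N-1}{N}\delta_p\delta_k+\tfrac1N M_{p,k}$, $\E[\beta_{p,k}^N]=\tfrac{N-1}{2N}\delta_p\delta_k+\tfrac1N J_{p,k}$, $\E[\widetilde\beta_{p,k}^N]=\tfrac1{2N}(J_{p,k}-J_{k,p})$) and your values $J_{0,0}=\tfrac56$, $J_{0,k}=\tfrac1{2\pi^2k^2}$, $J_{p,0}=-\tfrac1{2\pi^2p^2}$, $J_{p,k}=\tfrac{\delta_{p+k}}{2\pi^2p^2}$ for $p\neq0$: they all reproduce the stated table, including $\tfrac12+\tfrac1{3N}$ and $1+\tfrac2{3N}$. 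Your approach avoids stochastic integration entirely (all interchanges are justified by $\abs{e^{2i\pi kW}}=1$ and the finite moments of $T_N$), makes the $1/N$ scaling and the i.i.d.\ renewal structure transparent, connects directly to the ``stochastic Fourier series'' remark via the tent-function coefficients, and would extend mechanically to the third-order variables $\gamma^{(i),N}_{l,p,k}$; the price is heavier combinatorial bookkeeping, whereas the paper's It\^o/martingale argument is shorter and additionally produces the representation $\alpha_k^N=\tfrac{i}{\pi kN}\int_0^{T_N}e^{2i\pi kW}\,dW$, which is reused elsewhere.
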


\begin{proof}
Let $k\neq 0$ (the case $k=0$ is straightforward using Proposition \ref{proposition:properties_T1}), then the Itô formula applied to $e^{2i\pi k W(s)}$ gives
$$\frac{1}{2\pi^2 k^2 N} (e^{2i\pi k W(t)}-1) =  \frac{i}{\pi k N} \int_0^{t} e^{2i\pi k W(s)} dW(s) -\frac{1}{N} \int_0^{t} e^{2i\pi k W(s)} ds,$$
which yields at time $t=T_N$,
$$\alpha_k^N= \frac{i}{\pi kN} \int_0^{T_N} e^{2i\pi k W(s)} dW(s).$$
Then $t\mapsto \int_0^{t} e^{2i\pi k W(s)} dW(s)$ is a martingale, so by the Doob theorem, as $t\wedge T_N$ is finite, $\E[\int_0^{t\wedge T_N} e^{2i\pi k W(s)} dW(s)]=0$ for all $t$. The dominated convergence theorem for stochastic integrals allows to take the limit $t\rightarrow\infty$ and yields $\E[\alpha_k^N]=0$.

For the coefficients $\beta_{p,k}^N$, let $(p,k)\neq (0,0)$ (the case $p=k=0$ is obtained straightforwardly using Proposition \ref{proposition:properties_T1}), we use the Itô formula on $e^{2i\pi p W(s)}$ and we integrate from $s$ to $T_N$,
$$1-e^{2i\pi p W(s)} = 2i\pi p \int_s^{T_N} e^{2i\pi p W(r)} dW(r) -2\pi^2 p^2 \int_s^{T_N} e^{2i\pi p W(r)} dr.$$
Then, multiplying by $\frac{1}{N^2}e^{2i\pi k W(s)}$ and integrating from $0$ to $T_N$ yields
\begin{align*}
\frac{\alpha_k^N-\alpha_{p+k}^N}{N} &= \frac{2i\pi p}{N^2} \int_0^{T_N} e^{2i\pi k W(s)} \int_s^{T_N} e^{2i\pi p W(r)} dW(r) ds\\
&-\frac{2\pi^2 p^2}{N^2} \int_0^{T_N} e^{2i\pi k W(s)} \int_s^{T_N} e^{2i\pi p W(r)} dr ds.
\end{align*}
Using the stochastic Fubini theorem, 
we deduce
$$\int_0^{T_N} e^{2i\pi k W(s)} \int_s^{T_N} e^{2i\pi p W(r)} dW(r) ds
=\int_0^{T_N} e^{2i\pi p W(r)} \int_0^r e^{2i\pi k W(s)} ds dW(r),$$
which has zero average by the same arguments as before.
Also by the Fubini theorem for stochastic integrals, $\beta_{p,k}^N=\frac{1}{N^2}\int_0^{T_N} e^{2i\pi k W(s)} \int_s^{T_N} e^{2i\pi p W(r)} dr ds$, so that we get if $p\neq 0$,
$$\E[\beta_{p,k}^N]=\frac{\delta_{p+k}-\delta_k}{2\pi^2 p^2 N}.$$
The case $p = 0$ is obtained by integrating by parts and using the same arguments.
Indeed, we find
$$\beta_{0,k}^N=\frac{T_N}{N}\alpha_k^N-\beta_{k,0}^N
=\frac{i}{\pi kN^2} \int_0^{T_N} T_N e^{2i\pi k W(s)} dW(s)-\beta_{k,0}^N,$$
and $\E[\beta_{0,k}^N]=-\E[\beta_{k,0}^N]$.
Finally, the moments $\E[\alpha_p^N\alpha_k^N]$ are computed via the equality $\beta_{p,k}^N+\beta_{k,p}^N=\alpha_p^N\alpha_k^N$. Then we obtain $\E[\widetilde{\beta}_{p,k}^N]$ from the formula $\widetilde{\beta}_{p,k}^N=\beta_{p,k}^N-\frac{\alpha_p^N\alpha_k^N}{2}$.
\end{proof}

\begin{remark*}[Stochastic Fourier series]
Let $f$ be a $L^2$ function on $]0,1[$ extended on $\R$ by 1-periodicity, whose Fourier coefficients are denoted as $(c_k)_{k\in\Z}$. Then we deduce from Proposition \ref{proposition:moments_alpha_beta} the following equalities, where the second one is the stochastic version of the Bessel-Parseval theorem,
\begin{align*}
\E\left[\int_0^{T_1} f(W(s)) ds\right]&=c_0=\int_0^1 f(\theta) d\theta,\\
\E\left[\int_0^{T_1} \abs{f(W(s))}^2 ds\right]&=\sum_{k\in \Z} \abs{c_k}^2,\\
\E\left[\abs{\int_0^{T_1} f(W(s)) ds}^2\right]&=\frac{5\abs{c_0}^2}{3}+\sum_{k\in\Z^*} \frac{\abs{c_k}^2}{\pi^2 k^2}.
\end{align*}
\end{remark*}

\subsection{Asymptotic expansion at revolution times and limit model}
\label{section:limit_model}

With the results of Subsection \ref{section:probabilistic_properties}, it is now possible to evaluate the local expansions \eqref{equation:strong_Taylor_development} at revolution times.
To approximate numerically the integrals appearing in equation \eqref{equation:strong_Taylor_development} without evaluating $F$ and $F'$ too many times, we first replace the 1-periodic functions $g_\theta^0(y)$ and $g_\theta^1(y)(z)$ defined in \eqref{equation:def_g_theta} by their associated Fourier series with Fourier coefficients $(c_k^0(y))_{k\in\Z}$ and $(c_p^1(y))_{p\in\Z}$.
We define the following approximation of $\varphi_{\varepsilon,t}(y)$,
\begin{align}
\label{equation:Taylor_expansion_Fourier}
\psi_{\varepsilon,t}(y)&=e^{AW(t)}y
+\varepsilon \sum_{k\in \Z} e^{AW(t)} c_k^0(y)  \int_0^t e^{2i\pi k W(s)} ds\\
&+\varepsilon^2 \sum_{p,k\in \Z} e^{AW(t)} c_p^1(y) \left( c_k^0(y) \int_0^t \int_0^s e^{2i\pi p W(s)} e^{2i\pi k W(r)} dr ds \right). \nonumber
\end{align}
Notice that $c_k^0(y)\in\C^d $ and $c_p^1(y)=(c_k^0)'(y)\in\C^{d\times d}$ but $\psi_{\varepsilon,t}(y)\in \R^d$.
We now evaluate this function $\psi_{\varepsilon,t}(y)$ at time $t=T_N$ to get a second order strong approximation.

\begin{proposition}
\label{proposition:strong_Taylor_development_hitting_times}
We define the following quantity
$$
\psi_{\varepsilon,N}(y)=y+H\sum_{k\in\Z} c_k^0(y) \alpha_k^N+H^2\sum_{p,k\in \Z} c_p^1(y)(c_k^0(y)) \beta_{p,k}^N,
$$
where
$(c_k^0(y))_{k\in\Z}$ and $(c_p^1(y))_{p\in\Z}$ are the Fourier coefficients of the 1-periodic functions $g_\theta^0(y)$ and $g_\theta^1(y)$ defined in \eqref{equation:def_g_theta},
$\alpha_k^N$, $\beta_{p,k}^N$ are the random variables defined in Proposition \ref{proposition:moments_alpha_beta} and $y\in\R^d$ is deterministic.
Under Assumption \ref{assumption:F_Lipschitz}, for all test function $\phi \in \CC^3_{P}$, there exists $H_0>0$ such that for all $H=N\varepsilon\leq H_0$, the following estimates hold, where $C$ and $K$ are independent of $\varepsilon$ and $N$,
\begin{align}
\label{equation:strong_local_order_2_estimate}
\E\left[\abs{\varphi_{\varepsilon,T_N}(y)-\psi_{\varepsilon,N}(y)}^2\right]^{1/2}&\leq C(1+\abs{y}^K)H^3,\\
\label{equation:weak_local_order_2_estimate}
\abs{\E[\phi(\varphi_{\varepsilon,T_N}(y))|y]-\E[\phi(\psi_{\varepsilon,N}(y))]}&\leq C(1+\abs{y}^K)H^3,
\end{align}
that is, $\psi_{\varepsilon,N}(y)$ is a numerical approximation of $\varphi_{\varepsilon,T_N}(y)$ of strong/weak local order two.
\end{proposition}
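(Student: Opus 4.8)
The plan is to show that the seemingly complicated random quantity $\psi_{\varepsilon,N}(y)$ is in fact \emph{exactly} the second order local approximation $\psi_{\varepsilon,t}^2(y)$ of \eqref{equation:strong_Taylor_development} evaluated at $t=T_N$, after which both estimates reduce to the local expansion of Proposition \ref{proposition:strong_Taylor_development} combined with the exponential moment control \eqref{equation:properties_T1_exponential_estimate}. Two observations drive this identification. First, substituting the Fourier expansions \eqref{equation:def_g_theta} of $g_\theta^0$ and $g_\theta^1$ into \eqref{equation:strong_Taylor_development}, recognising the inner integrand $e^{-AW(s)}F(e^{AW(s)}y)$ as $g_{W(s)}^0(y)$ and using the linearity of $c_p^1(y)$ in its argument, one sees that $\psi_{\varepsilon,t}^2(y)$ coincides with the Fourier form $\psi_{\varepsilon,t}(y)$ of \eqref{equation:Taylor_expansion_Fourier}; the interchange of the Fourier sums with the time integrals is justified because $F\in\CC^3_P$ makes $g^0$, $g^1$ smooth and $1$-periodic, so their Fourier series converge absolutely and uniformly. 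Second, at a revolution time the increment of $W$ is $\pm 1$, hence $W(T_N)\in\Z$ and the standing hypothesis $e^A=\Id$ gives $e^{AW(T_N)}=(e^A)^{W(T_N)}=\Id$. The oscillatory prefactor $e^{AW(t)}$ in \eqref{equation:Taylor_expansion_Fourier} therefore collapses at $t=T_N$, and recalling $H\alpha_k^N=\varepsilon\int_0^{T_N}e^{2i\pi kW(s)}ds$ and $H^2\beta_{p,k}^N=\varepsilon^2\int_0^{T_N}\int_0^s e^{2i\pi pW(s)}e^{2i\pi kW(r)}\,dr\,ds$ from Proposition \ref{proposition:moments_alpha_beta}, one obtains the exact pathwise identity $\psi_{\varepsilon,N}(y)=\psi_{\varepsilon,T_N}(y)=\psi_{\varepsilon,T_N}^2(y)$.

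For the strong estimate \eqref{equation:strong_local_order_2_estimate}, I would then apply Proposition \ref{proposition:strong_Taylor_development} with $j=2$. Its bound is obtained from a pathwise Gronwall argument and the continuity of $W$, hence it holds almost surely simultaneously for all $t\geq 0$ and may be evaluated at the stopping time $t=T_N$, giving almost surely
$$\abs{\varphi_{\varepsilon,T_N}(y)-\psi_{\varepsilon,N}(y)}=\abs{\varphi_{\varepsilon,T_N}(y)-\psi_{\varepsilon,T_N}^2(y)}\leq C(1+\abs{y}^K)e^{C\varepsilon T_N}(\varepsilon T_N)^3.$$
Taking the $L^2$ norm and invoking \eqref{equation:properties_T1_exponential_estimate} (up to a rescaling of $\varepsilon$ absorbed into the constants, valid once $H_0$ is small enough) yields $\E[e^{2C\varepsilon T_N}(\varepsilon T_N)^6]^{1/2}\leq Ce^{CH}H^3\leq CH^3$ for $H\leq H_0$, which is exactly the claimed strong local order two bound.

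The weak estimate \eqref{equation:weak_local_order_2_estimate} then follows from the strong one. I would write $\phi(\varphi_{\varepsilon,T_N}(y))-\phi(\psi_{\varepsilon,N}(y))$ as an integral of $\phi'$ along the segment joining the two points, bound $\abs{\phi'}$ by $C(1+\abs{\cdot}^K)$ using $\phi\in\CC^3_P$, and apply the Cauchy--Schwarz inequality to separate the polynomial growth factor from $\abs{\varphi_{\varepsilon,T_N}(y)-\psi_{\varepsilon,N}(y)}$. The growth factor has bounded moments: those of $\varphi_{\varepsilon,T_N}(y)$ come from Lemma \ref{lemma:regularity_varphi} together with \eqref{equation:properties_T1_exponential_estimate}, and those of $\psi_{\varepsilon,N}(y)=\psi_{\varepsilon,T_N}^2(y)$ from the bound \eqref{equation:Taylor_bound_2} and the same exponential moment. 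The remaining $L^2$ factor is the strong estimate just proved, so the product is bounded by $C(1+\abs{y}^K)H^3$.

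The main obstacle is not any single estimate but the clean algebraic identification $\psi_{\varepsilon,N}(y)=\psi_{\varepsilon,T_N}^2(y)$: one must correctly match the Fourier-coefficient form to the iterated-integral form of the local expansion, and, crucially, exploit that the highly-oscillatory factor $e^{AW(T_N)}$ becomes the identity precisely at revolution times. Once this is established, the rest is routine: converting the deterministic-in-$t$ factor $e^{C\varepsilon t}(\varepsilon t)^3$ into a bound in $H=\varepsilon N$ through the exponential moment control of $T_N$, and deducing the weak bound from the strong one by a standard Taylor-and-Cauchy--Schwarz argument.
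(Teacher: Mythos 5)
Your proof is correct and follows essentially the same route as the paper: identify $\psi_{\varepsilon,N}(y)$ with $\psi^2_{\varepsilon,T_N}(y)$ (the paper leaves this implicit, using $e^{AW(T_N)}=\Id$ at revolution times), evaluate the pathwise local expansion of Proposition \ref{proposition:strong_Taylor_development} at $t=T_N$, and control $\E[e^{C\varepsilon T_N}(\varepsilon T_N)^3]$ via Proposition \ref{proposition:properties_T1} for $H\leq H_0$. The only cosmetic difference is in the weak estimate, where you apply Cauchy--Schwarz to reduce to the strong bound, whereas the paper bounds the product $\sup|\phi'|\cdot|\varphi-\psi^2|$ pathwise before taking expectations; both are equally routine.
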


\begin{proof}
Inequality \eqref{equation:strong_local_order_2_estimate} is a straightforward consequence of Proposition \ref{proposition:properties_T1} when evaluating the estimates of Proposition \ref{proposition:strong_Taylor_development} at time $T_N$.
For the weak local estimate \eqref{equation:weak_local_order_2_estimate}, using inequality \eqref{equation:strong_local_order_2_estimate}, the mean value inequality, Lemma \ref{lemma:regularity_varphi} and equations \eqref{equation:Taylor_bound_1} and \eqref{equation:Taylor_bound_2}, we get
\begin{align*}
\E\Big[\Big|\phi(\varphi_{\varepsilon,T_N}(y))&-\phi(\psi_{\varepsilon,T_N}^j(y))\Big|\Big]\\
&\leq \E\left[\sup_{x \in [\varphi_{\varepsilon,T_N}(y) , \psi_{\varepsilon,T_N}^j(y)]} \abs{\phi'(x)} \abs{\varphi_{\varepsilon,T_N}(y)-\psi_{\varepsilon,T_N}^j(y)}\right]\\
&\leq C(1+\abs{y}^K)\E\left[e^{C\varepsilon T_N}(\varepsilon T_N)^{j+1} \sup_{x \in [\varphi_{\varepsilon,T_N}(y) , \psi_{\varepsilon,T_N}^j(y)]} (1+\abs{x}^p)\right]\\
&\leq C(1+\abs{y}^K)\E\left[(\varepsilon T_N)^{j+1} e^{C\varepsilon T_N} \left(1+\abs{\varphi_{\varepsilon,T_N}(y)}^p+\abs{\psi_{\varepsilon,T_N}^j(y)}^p\right)\right]\\
&\leq C(1+\abs{y}^K)\E\left[(\varepsilon T_N)^{j+1} e^{C\varepsilon T_N}\right].
\end{align*}
Finally we obtain inequality \eqref{equation:weak_local_order_2_estimate} by taking $H$ small enough so that we can apply Proposition \ref{proposition:properties_T1}.
\end{proof}

For a fixed $T=N\varepsilon$, when $\varepsilon\rightarrow 0$ (or equivalently $N\rightarrow\infty$), the solution of \eqref{equation:NLS_rescaled} evaluated at stroboscopic times $T_N=T_{T\varepsilon^{-1}}$ converges weakly to the solution of a deterministic ODE, that involves only the first mode $c^0_0=\langle g^0 \rangle=\int_0^1 g^0_\theta d\theta$ of $g^0$. This asymptotic model is the same one as for the deterministic equation \eqref{equation:highly_oscillatory_ODE}. The proof is postponed to Subsection \ref{section:global_error_proof}.
\begin{proposition}[Asymptotic model]
\label{proposition:asymptotic_model}
Under Assumption \ref{assumption:F_Lipschitz}, for $T>0$, the solution $\varphi_{\varepsilon,T_{T\varepsilon^{-1}}}(X_0)$ (for $\varepsilon$ such that $T\varepsilon^{-1}$ is an integer) of equation \eqref{equation:NLS} converges weakly when $\varepsilon\rightarrow 0$ to the solution at time $T$ of
\begin{equation}
\label{equation:NLS_asymptotic}
\frac{d y_{t}}{dt}=\langle g^0 \rangle (y_{t}), \ y_{0}=X_0,
\end{equation}
that is, for all test function $\phi \in \CC^3_P$,
$$\lim_{\varepsilon\rightarrow 0} \abs{\E[\phi(\varphi_{\varepsilon,T_{T\varepsilon^{-1}}}(X_0))]-\E[\phi(y_{T})]}=0.$$
\end{proposition}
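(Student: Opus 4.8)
The plan is to turn the statement into a global weak error that I reduce, by a telescoping (Lady Windermere) argument, to a sum of local errors over macro-steps of a fixed size $H$, and then to conclude by a double limit in which $\varepsilon\to 0$ first and $H\to 0$ afterwards. Fix a small $H\leq H_0$ (with $H_0$ from Proposition \ref{proposition:strong_Taylor_development_hitting_times}) such that $T/H=:M\in\N$ and $H/\varepsilon=:n\in\N$, so that $N=Mn$, and let $\Phi_t$ be the time-$t$ flow of the limit ODE \eqref{equation:NLS_asymptotic}, so $y_T=\Phi_T(X_0)$. Because each revolution time is a stopping time at which $W$ has accumulated an integer displacement and hence $e^{AW(T_{jn})}=\Id$, the strong Markov property lets me write $\varphi_{\varepsilon,T_N}=\mathcal{E}_M\circ\cdots\circ\mathcal{E}_1$ where the $\mathcal{E}_j$ are independent copies of the $n$-revolution flow, $\mathcal{E}_j$ being independent of $Z_{j-1}:=\mathcal{E}_{j-1}\circ\cdots\circ\mathcal{E}_1(X_0)$. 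Using the semigroup identity $\Phi_{(M-j+1)H}=\Phi_{(M-j)H}\circ\Phi_H$, I telescope
$$\E[\phi(\varphi_{\varepsilon,T_N}(X_0))]-\phi(y_T)=\sum_{j=1}^M\E\big[\widetilde\phi_j(\mathcal{E}_j(Z_{j-1}))-\widetilde\phi_j(\Phi_H(Z_{j-1}))\big],$$
with $\widetilde\phi_j:=\phi\circ\Phi_{(M-j)H}$. The key structural point, which prevents any blow-up of derivatives under the $M$ compositions, is that $\widetilde\phi_j$ is $\phi$ composed with a \emph{single} flow of time $\leq T$, so its $\CC^3_P$ norm is bounded uniformly in $j$, $H$ and $M$.

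The heart of the matter is the local estimate
$$\abs{\E[\widetilde\phi_j(\mathcal{E}_j(z))\mid z]-\widetilde\phi_j(\Phi_H(z))}\leq C(1+\abs{z}^K)(H^3+H\varepsilon),$$
which I split into two pieces. The first, $\abs{\E[\widetilde\phi_j(\mathcal{E}_j(z))\mid z]-\E[\widetilde\phi_j(\psi_{\varepsilon,n}(z))]}$, is $\OO(H^3)$ by the weak local estimate \eqref{equation:weak_local_order_2_estimate}. For the second, $\abs{\E[\widetilde\phi_j(\psi_{\varepsilon,n}(z))]-\widetilde\phi_j(\Phi_H(z))}$, I Taylor-expand $\widetilde\phi_j$ to second order about $z$, take expectations, and insert the moments of Proposition \ref{proposition:moments_alpha_beta}. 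The first-order term gives $\widetilde\phi_j'(z)(Hc_0^0(z))$ since $\E[\alpha_k^n]=\delta_k$; the deterministic second-order term gives $\tfrac{H^2}{2}\widetilde\phi_j'(z)(c_0^1(z)(c_0^0(z)))$ since $\E[\beta_{0,0}^n]=\tfrac12+\tfrac{1}{3n}$; and the variance term $\tfrac{H^2}{2}\E[\widetilde\phi_j''(z)(\psi_{\varepsilon,n}(z)-z,\psi_{\varepsilon,n}(z)-z)]$ collapses to $\tfrac{H^2}{2}\widetilde\phi_j''(z)(c_0^0(z),c_0^0(z))$ since $\E[\alpha_p^n\alpha_k^n]\to 1$ when $p=k=0$ and $\to 0$ otherwise. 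Recalling $c_0^0=\langle g^0\rangle$ and the derivative relation $c_0^1=(c_0^0)'$ (read off from $g^1=(g^0)'$ in \eqref{equation:def_g_theta}), these three contributions are exactly the second-order Taylor expansion of $\widetilde\phi_j(\Phi_H(z))$; the residuals are the $\OO(H^3)$ Taylor remainders and the $\OO(1/n)$ corrections in the moments, the latter producing $H^2\cdot\OO(1/n)=\OO(H\varepsilon)$.

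It remains to sum the $M=T/H$ local errors with a stability bound. Using the growth estimate of Lemma \ref{lemma:regularity_varphi} together with the exponential moment bound \eqref{equation:properties_T1_exponential_estimate} (the accumulated amplification factor is $e^{C\varepsilon T_N}$ with $\varepsilon N=T$ fixed), I get $\sup_{j}\E[\abs{Z_j}^K]\leq C(1+\abs{X_0}^K)$ uniformly in $\varepsilon$. Conditioning each summand on $Z_{j-1}$ and applying the local estimate then yields
$$\abs{\E[\phi(\varphi_{\varepsilon,T_N}(X_0))]-\phi(y_T)}\leq CT(1+\abs{X_0}^K)(H^2+\varepsilon),$$
with $C$ independent of $H$ and $\varepsilon$. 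Given $\eta>0$, I first fix $H$ small enough that $CT(1+\abs{X_0}^K)H^2<\eta/2$, and then let $\varepsilon\to 0$ along the sequence for which $n,M\in\N$, making the residual $CT(1+\abs{X_0}^K)\varepsilon$ less than $\eta/2$; since $\eta$ is arbitrary this gives the claimed convergence.

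I expect the main obstacle to be the second piece of the local estimate, namely verifying that the expected one-step action of $\psi_{\varepsilon,n}$ on a test function reproduces the second-order Taylor expansion of the ODE flow. This rests on two exact cancellations: the drift term $\tfrac{H^2}{2}c_0^1(c_0^0)$ must coincide with the $H^2$-coefficient of $\Phi_H$, which is precisely where the identity $c_0^1=(c_0^0)'$ is essential, while the variance term must reduce to $\tfrac{H^2}{2}\widetilde\phi_j''(c_0^0,c_0^0)$ as $n\to\infty$, which is the content of the limiting values of $\E[\alpha_p^n\alpha_k^n]$ in Proposition \ref{proposition:moments_alpha_beta}. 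One must additionally justify interchanging the infinite Fourier sums with expectation and control the third-order Taylor remainder via $\E[\abs{\psi_{\varepsilon,n}(z)-z}^3]=\OO(H^3)$, both of which follow from the fast decay of the Fourier coefficients $c_k^0,c_k^1$ granted by the smoothness of $F$ in Assumption \ref{assumption:F_Lipschitz}.
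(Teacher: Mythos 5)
Your proposal is correct in substance and rests on the same mechanism as the paper's argument: a telescoping sum reducing the global weak error to local ones, the local expansion $\psi_{\varepsilon,n}$ of Proposition \ref{proposition:strong_Taylor_development_hitting_times}, the moments of Proposition \ref{proposition:moments_alpha_beta}, and uniform moment bounds for stability. The difference is one of packaging: the paper has already encapsulated all of this in Theorem \ref{theorem:global_convergence_theorem}, and its proof of Proposition \ref{proposition:asymptotic_model} simply invokes that theorem ``for order one'' (the Euler scheme \eqref{algorithm:weak_order_1} for the averaged ODE matches the first moments $\E[\alpha_k^N]=\delta_k$) and then sets $N=1$, $M=T/\varepsilon$, so that the global order-one error in $H=N\varepsilon$ becomes $\OO(\varepsilon)$. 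You instead unroll the telescoping by hand, compare against the exact flow $\Phi_H$ of \eqref{equation:NLS_asymptotic} rather than its Euler discretization, and compose the test function with the deterministic limit flow instead of the stochastic one; this buys a transparent view of where $c_0^1=(c_0^0)'$ and the limiting values of $\E[\alpha_p^n\alpha_k^n]$ and $\E[\beta_{p,k}^n]$ enter, at the cost of re-deriving machinery the paper already established. One point to repair: your concluding double limit (``fix $H$, then let $\varepsilon\to 0$ along the subsequence with $H/\varepsilon\in\N$'') only yields convergence along those subsequences, not along all $\varepsilon$ with $T\varepsilon^{-1}\in\N$; for $\varepsilon=T/K$ with $K$ prime the only admissible intermediate choices are $H=\varepsilon$ or $H=T$. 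But your own global bound $CT(1+\E[\abs{X_0}^{K}])(H^2+\varepsilon)$ applied with $H=\varepsilon$ (i.e.\ $n=1$, $M=T/\varepsilon$, which is exactly the paper's choice) gives $\OO(\varepsilon)$ for every admissible $\varepsilon$ and closes the argument.
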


\begin{remark}
It can be proven using the results of Section \ref{section:convergence_theorem} that the solution of the asymptotic model (Proposition \ref{proposition:asymptotic_model}) is an order one weak approximation of $X(\varepsilon T_{Nm})$ for $m\geq 0$ and $X$ solution of equation \eqref{equation:NLS}.
We deduce the following simple one-step explicit deterministic integrator that corresponds to the Euler method applied to equation \eqref{equation:NLS_asymptotic},
\begin{equation}
\label{algorithm:weak_order_1}
y_0 = X_0,\ y_{m+1} = y_m + H c_0^0(y_m).
\end{equation}
Its cost is independent of $\varepsilon$ and $N$, and it has weak order one w.r.t.\ts $H$, that is for all $m\geq 0$, $\E[\phi(\varphi_{\varepsilon,T_{Nm}}(X_0))]-\E[\phi(y_m)]= \OO(H)$.
\end{remark}

\subsection{Construction of the second order integrators}
\label{section:stiff_integrators}


To obtain an integrator of weak order two with a cost independent of $\varepsilon$ and $N$, we truncate the local expansion of Proposition \ref{proposition:strong_Taylor_development_hitting_times}. We also replace the involved random variables with cheap discrete random variables with the same first and second moments.
To simulate the random variable $\alpha_k^N$ with discrete random variables $\widehat{\alpha}_k^{N}$ with the same first and second moments, we introduce a set $(\xi_k)_{k\in\N}$ of independent random variables, such that $\P(\xi_k = \pm 1)=\frac{1}{2}$, the covariance matrix $(C_\alpha^N)_{p,k}$ such that
$$(C_\alpha^N)_{2p-1:2p,2k-1:2k}=\begin{pmatrix}
\Cov(\Real(\alpha_p^N),\Real(\alpha_k^N)) & \Cov(\Real(\alpha_p^N),\Imag(\alpha_k^N))\\
\Cov(\Imag(\alpha_p^N),\Real(\alpha_k^N)) & \Cov(\Imag(\alpha_p^N),\Imag(\alpha_k^N))
\end{pmatrix},$$
and $\Gamma^N$ its square root. Then, $\widehat{\alpha}_k^{N}$ is defined for $k\geq 0$ as
$$\widehat{\alpha}_k^N=\delta_k+\sum_{l\in\N} (\Gamma^N_{2k-1,l}+i\Gamma^N_{2k,l}) \xi_l \quad \text{with} \quad \delta_k
=\left\{
\begin{array}{l}
1 \text{ if } k=0\\
0 \text{ else}
\end{array}
\right.$$
and we fix $\widehat{\alpha}_k^N=\overline{\widehat{\alpha}_{-k}^N}$ for $k<0$ (so that the solution stays real while still having the good moments).
We also define $\widehat{\beta}_{p,k}^N=\E[\beta_{p,k}^N]$ with the values of Proposition \ref{proposition:moments_alpha_beta}.
Doing so yields Method \ref{algorithm:weak_order_2}.

For Method \ref{algorithm:geometric_weak_order_2}, we adapt Method \ref{algorithm:weak_order_2} in the spirit of the middle point scheme for ODEs (see \cite[Chap.\ts IV]{Hairer06gni}) so that it preserves any quadratic invariant.
We also replace $\widehat{\beta}_{p,k}^N$ by $\widehat{\widetilde{\beta}}_{p,k}^N=\E[\widetilde{\beta}_{p,k}^N]$, using the values of Proposition \ref{proposition:moments_alpha_beta}.

\begin{remark}
The methodology presented in Section \ref{section:construction_algorithm} can be generalised to any order.
Thus, under more regularity assumptions on $F$, it is possible to build algorithms similar to Method \ref{algorithm:weak_order_2} of any weak order and that are still robust with respect to the stiffness $\varepsilon$.
For order 3, Method \ref{algorithm:weak_order_2} becomes
\begin{align*}
Y_{m+1} &= Y_m
+H \sum_{k\in \Z} c_k^0(Y_m) \widehat{\alpha}_k^N
+H^2 \sum_{p,k\in \Z} c_p^1(Y_m) (c_k^0(Y_m)) \widehat{\beta}_{p,k}^N\\
&+H^3 \sum_{l,p,k\in \Z} c_l^1 (c_p^1 (c_k^0))(Y_m) \widehat{\gamma}_{l,p,k}^{(1),N}
+c_l^2 (c_p^0,c_k^0)(Y_m) \widehat{\gamma}_{l,p,k}^{(2),N}
\end{align*}
with the new random variables
\begin{align*}
\gamma_{l,p,k}^{(1),N}&=\frac{1}{N^2} \int_0^{T_N} e^{2i\pi qW(s)}\int_0^{s} e^{2i\pi pW(r)} \int_0^{r} e^{2i\pi kW(q)} dq dr ds\\
\gamma_{l,p,k}^{(2),N}&=\frac{1}{2 N^2} \int_0^{T_N} e^{2i\pi qW(s)}\int_0^{s} e^{2i\pi pW(r)} \int_0^{s} e^{2i\pi kW(q)} dq dr ds,
\end{align*}
and where the discrete random variables share the same moments up to order 3 for the $\widehat{\alpha}_k^N$, order 2 for the $\widehat{\beta}_{p,k}^N$, and order 1 for the $\widehat{\gamma}_{l,p,k}^{(i),N}$.
It is also possible to generalise Method \ref{algorithm:geometric_weak_order_2} up to any order in the spirit of the middle-point scheme, but the construction of discrete random variables allowing the preservation of quadratic invariants is not obvious for higher orders (although backward error analysis guarantees the preservation of quadratic invariants for the exact random variables based on $W$).

\end{remark}

\section{Weak convergence analysis}
\label{section:convergence_theorem}

This section focuses on the proofs of the following two theorems, showing the order two convergence of Methods \ref{algorithm:weak_order_2} and \ref{algorithm:geometric_weak_order_2}.

\begin{theorem}
\label{theorem:convergence_algorithm_order_2}
Assume that the Fourier coefficients $c_k^0$, $c_p^1$ of
$g_\theta^0$ and $g_\theta^1$ in \eqref{equation:def_g_theta} are non-zero only for $-K_t /2\leq k,p< K_t /2$.
Then, under Assumption \ref{assumption:F_Lipschitz}, Method \ref{algorithm:weak_order_2} has weak order two, that is, for all $T>0$, for all test function $\phi \in \CC^3_P$, there exists $H_0>0$ such that for all $H\leq H_0$, for all $m\geq 0$ such that $mN\varepsilon=mH\leq T$, there exists two positive constants $K$ and $C$ both independent of $\varepsilon$, $N$ and $K_t$ such that
\begin{equation}
\label{equation:theorem_cv_estimate}
\abs{\E[\phi(\varphi_{\varepsilon,T_{Nm}}(X_0))]-\E[\phi(Y_m)]}\leq CH^2(1+\E[\abs{X_0}^{K}]).
\end{equation}
\end{theorem}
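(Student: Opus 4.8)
The plan is to derive the global bound from a Lady Windermere's fan (telescoping) argument that expresses the global weak error as a sum of one-step weak errors propagated through the exact flow. The structural ingredient is that both processes are time-homogeneous Markov chains: by the strong Markov property at the stopping time $T_{N(m-1)}$ and the independence of the Brownian increments on the disjoint blocks $[T_{N(m-1)},T_{Nm}]$, the exact transition carrying $\varphi_{\varepsilon,T_{N(m-1)}}(X_0)$ to $\varphi_{\varepsilon,T_{Nm}}(X_0)$ is an independent copy of the one-block map $\mathcal{E}\colon y\mapsto\varphi_{\varepsilon,T_N}(y)$, while one step \eqref{equation:order_2_scheme} of Method \ref{algorithm:weak_order_2} is a map $\mathcal{N}$ driven by fresh discrete variables $(\widehat{\alpha}_k^N,\xi_l)$. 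Inserting the hybrid chains $Z_j=\mathcal{E}^{m-j}(\mathcal{N}^j(X_0))$, writing $Y^{(j)}:=\mathcal{N}^j(X_0)$, and setting $\Phi_l(y)=\E[\phi(\varphi_{\varepsilon,T_{Nl}}(y))]$, the independence of $Y^{(j)}$ from the step-$(j+1)$ randomness gives
\begin{align*}
\E[\phi(\varphi_{\varepsilon,T_{Nm}}(X_0))]-\E[\phi(Y_m)]
&=\sum_{j=0}^{m-1}\E\big[\Phi_{m-j-1}(\mathcal{E}(Y^{(j)}))-\Phi_{m-j-1}(\mathcal{N}(Y^{(j)}))\big],
\end{align*}
so each summand is a one-step weak error against the test function $\Phi_{m-j-1}$, conditioned on the (deterministic, once conditioned) value $Y^{(j)}$.

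Second, I would prove the one-step weak estimate $\abs{\E[\Phi(\mathcal{E}(y))-\Phi(\mathcal{N}(y))]}\le C(1+\abs{y}^K)H^3$ for any $\Phi\in\CC^3_P$. Proposition \ref{proposition:strong_Taylor_development_hitting_times} already controls the passage from $\varphi_{\varepsilon,T_N}(y)$ to the expansion $\psi_{\varepsilon,N}(y)$ built on the continuous variables $\alpha_k^N,\beta_{p,k}^N$ at weak local order $H^3$, so it remains to compare $\psi_{\varepsilon,N}(y)$ with the numerical increment $\mathcal{N}(y)$, which differs only through the substitutions $\alpha_k^N\mapsto\widehat{\alpha}_k^N$ and $\beta_{p,k}^N\mapsto\widehat{\beta}_{p,k}^N$. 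Taylor-expanding $\Phi$ to second order about $y$, the $\OO(H)$ and $\OO(H^2)$ mean terms depend on the increments only through $\E[\alpha_k^N]=\E[\widehat{\alpha}_k^N]$ and $\widehat{\beta}_{p,k}^N=\E[\beta_{p,k}^N]$, while the quadratic term depends only on $\E[\alpha_p^N\alpha_k^N]=\E[\widehat{\alpha}_p^N\widehat{\alpha}_k^N]$; by the moment matching of Proposition \ref{proposition:moments_alpha_beta} and the construction of Section \ref{section:stiff_integrators} all these terms coincide, leaving a third-order remainder of size $H^3$ (the increment has mean $\OO(H)$ and fluctuations of standard deviation $\OO(HN^{-1/2})$, so its third absolute moment is $\OO(H^3)$). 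The truncation hypothesis makes every Fourier sum finite, and the constants are independent of $K_t$ because the surviving sums, such as $\sum_{k}\abs{c_k^0(y)}^2/(\pi^2 k^2)$, are bounded by $L^2$-type norms of $g_\theta^0,g_\theta^1$ via the Bessel--Parseval identities of the Remark following Proposition \ref{proposition:moments_alpha_beta}, hence by the derivative bounds on $F$ in Assumption \ref{assumption:F_Lipschitz}.

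Third, to apply this with $\Phi=\Phi_{m-j-1}$ I need $\Phi_l\in\CC^3_P$ with polynomial-growth derivative bounds uniform over $lH\le T$. Since $\theta\mapsto e^{A\theta}$ is bounded, the estimates of Lemma \ref{lemma:regularity_varphi} are pathwise; evaluating the third estimate at $t=T_{Nl}$ gives $\abs{\varphi_{\varepsilon,T_{Nl}}^{(i)}(y)}\le C(\varepsilon T_{Nl})^{i-1}(1+\abs{y}^K)e^{C\varepsilon T_{Nl}}$, and taking expectations with the exponential moment bound \eqref{equation:properties_T1_exponential_estimate} of Proposition \ref{proposition:properties_T1} yields $\CC^3$ bounds for $\Phi_l$ controlled by $e^{CT}$, uniformly in $\varepsilon$. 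Combined with uniform moment bounds $\E[\abs{Y^{(j)}}^p]\le C(1+\E[\abs{X_0}^p])$ for the scheme — obtained from the boundedness of the $\widehat{\alpha}_k^N$ and a discrete Gronwall inequality — each summand is $\le C(1+\E[\abs{X_0}^K])H^3$, and summing the $m\le T/H$ terms gives $CT(1+\E[\abs{X_0}^K])H^2$, which is \eqref{equation:theorem_cv_estimate}.

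The main obstacle is the uniform bookkeeping of the polynomial-growth and exponential-moment factors through the fan: although each ingredient (pathwise flow regularity, exponential moments of $T_N$, moment matching, numerical moment bounds) is available, one must ensure that composing $\OO(\varepsilon^{-1})$ stochastic block-maps and accumulating $m\sim T/H$ local errors — each weighted by a polynomial growth factor in the intermediate state and an exponential factor in the random time $T_{Nl}$ — produces constants independent of $\varepsilon$ and of the horizon, so that the fan telescopes to order $H^2$ without degradation as $\varepsilon\to0$. Establishing the compatibility of the exact and numerical moment bounds that makes this propagation loss-free is the delicate point; once it is settled, the telescoping and the moment-matching local estimate combine routinely.
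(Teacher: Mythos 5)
Your proposal is correct and follows essentially the same route as the paper: a telescoping (Lady Windermere's fan) decomposition into one-step weak errors measured against the test functions obtained by composing $\phi$ with the remaining exact flow, whose $\CC^3_P$ bounds come from Lemma \ref{lemma:regularity_varphi} and the exponential moment estimate \eqref{equation:properties_T1_exponential_estimate}; a local $\OO(H^3)$ estimate obtained by combining Proposition \ref{proposition:strong_Taylor_development_hitting_times} with a second-order Taylor expansion and the first/second moment matching of Proposition \ref{proposition:moments_alpha_beta}; and uniform moment bounds for the numerical chain via a discrete Gronwall argument, exactly as in Propositions \ref{proposition:bounded_moments}, \ref{proposition:local_weak_error} and Theorem \ref{theorem:global_convergence_theorem}.
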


\begin{theorem}
\label{theorem:convergence_algorithm_order_2_geometric}
Assume that the Fourier coefficients $c_k^0$, $c_p^1$ of
$g_\theta^0$ and $g_\theta^1$ in \eqref{equation:def_g_theta} are non-zero only for $-K_t /2\leq k,p < K_t /2$.
Then, under Assumption \ref{assumption:F_Lipschitz}, if $c_0^1(c_k^0)$ and $c_p^1(c_0^0)$ are Lipschitz continuous uniformly in $k$ and $p$, Method \ref{algorithm:geometric_weak_order_2} is well defined and has weak order two (i.e.\ts it satisfies an estimate of the form \eqref{equation:theorem_cv_estimate}).
In addition, if for a fixed symmetric matrix $ \invariantmatrix \in \R^{d\times d}$, the quantity $\invariant(y)=\frac{1}{2}y^T \invariantmatrix y$ is preserved by equation \eqref{equation:NLS}, then Method \ref{algorithm:geometric_weak_order_2} also preserves the invariant $\invariant(y)=\frac{1}{2}y^T \invariantmatrix y$, that is, the solution $Y_{m+1}$ of equation \eqref{equation:geometric_order_2_scheme} satisfies $\invariant(Y_{m+1})=\invariant(Y_m)$.
\end{theorem}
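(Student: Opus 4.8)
The plan is to treat the two assertions separately, reducing the weak-order-two claim to the analysis already carried out for Method \ref{algorithm:weak_order_2}, and settling the invariant preservation by the classical midpoint argument.

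\textbf{Well-definedness and weak order two.} First I would show that the implicit equation \eqref{equation:geometric_order_2_scheme} has a unique solution for $H$ small via a Banach fixed-point argument: the fixed-point map associated with \eqref{equation:geometric_order_2_scheme} has Lipschitz constant $\OO(H)$, because the global Lipschitz continuity of $F$ makes each $c_k^0$ Lipschitz, and the only surviving $H^2$ terms (those with $p=0$ or $k=0$, since $\widehat{\widetilde{\beta}}_{p,k}^N$ vanishes otherwise) are Lipschitz uniformly in $p,k$ by hypothesis. For the order, the idea is to Taylor-expand the midpoint evaluations around $Y_m$. Writing $\overline{Y}=(Y_m+Y_{m+1})/2=Y_m+\tfrac{H}{2}\sum_k c_k^0(Y_m)\widehat{\alpha}_k^N+\OO(H^2)$ and using $c_p^1=(c_p^0)'$, one step of Method \ref{algorithm:geometric_weak_order_2} becomes, up to $\OO(H^3)$,
\[
Y_{m+1}=Y_m+H\sum_k c_k^0(Y_m)\widehat{\alpha}_k^N+H^2\sum_{p,k}c_p^1(Y_m)(c_k^0(Y_m))\left(\tfrac12\widehat{\alpha}_p^N\widehat{\alpha}_k^N+\widehat{\widetilde{\beta}}_{p,k}^N\right).
\]
Taking expectations and using that the discrete variables share the first two moments of $\alpha_k^N$, together with $\widehat{\widetilde{\beta}}_{p,k}^N=\E[\widetilde{\beta}_{p,k}^N]$ and the identity $\widetilde{\beta}_{p,k}^N=\beta_{p,k}^N-\tfrac12\alpha_p^N\alpha_k^N$ of Proposition \ref{proposition:moments_alpha_beta}, the effective coefficient $\tfrac12\E[\widehat{\alpha}_p^N\widehat{\alpha}_k^N]+\E[\widetilde{\beta}_{p,k}^N]$ collapses to $\E[\beta_{p,k}^N]=\widehat{\beta}_{p,k}^N$. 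Thus Method \ref{algorithm:geometric_weak_order_2} has the same local weak behaviour as Method \ref{algorithm:weak_order_2}, and the global estimate \eqref{equation:theorem_cv_estimate} follows by the same local-to-global telescoping argument as for Theorem \ref{theorem:convergence_algorithm_order_2}, invoking Proposition \ref{proposition:strong_Taylor_development_hitting_times}.

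\textbf{Invariant preservation.} I would first translate the hypothesis that $\invariant(y)=\tfrac12 y^T\invariantmatrix y$ is preserved by \eqref{equation:NLS} into the two pointwise conditions $(e^{A\theta})^T\invariantmatrix e^{A\theta}=\invariantmatrix$ for all $\theta$ (equivalently $A^T\invariantmatrix+\invariantmatrix A=0$, coming from the Stratonovich diffusion) and $y^T\invariantmatrix F(y)=0$ for all $y$ (coming from the drift). Since $\invariantmatrix$ is symmetric, $\invariant(Y_{m+1})-\invariant(Y_m)=\overline{Y}^T\invariantmatrix(Y_{m+1}-Y_m)$, so it suffices to show that the increment direction $G(y):=\sum_k c_k^0(y)\widehat{\alpha}_k^N+H\sum_{p,k}c_p^1(y)(c_k^0(y))\widehat{\widetilde{\beta}}_{p,k}^N$ satisfies $y^T\invariantmatrix G(y)=0$ for every $y$, and then to evaluate at $y=\overline{Y}$. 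For the first-order part, substituting $u=e^{A\theta}y$ in $c_k^0(y)=\int_0^1 e^{-A\theta}F(e^{A\theta}y)e^{-2i\pi k\theta}d\theta$ and using the two conditions gives $y^T\invariantmatrix e^{-A\theta}F(e^{A\theta}y)=u^T\invariantmatrix F(u)=0$, hence $y^T\invariantmatrix c_k^0(y)=0$ for all $k$.

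\textbf{The crux.} The delicate point is the $H^2$ term, and it is precisely where the symmetrized coefficient $\widehat{\widetilde{\beta}}$ is needed. Differentiating the identity $y^T\invariantmatrix c_p^0(y)=0$ in a direction $z$ and using $c_p^1=(c_p^0)'$ yields $y^T\invariantmatrix c_p^1(y)(z)=-z^T\invariantmatrix c_p^0(y)$; with $z=c_k^0(y)$ this gives $y^T\invariantmatrix c_p^1(y)(c_k^0(y))=-c_k^0(y)^T\invariantmatrix c_p^0(y)$. Because $\widehat{\widetilde{\beta}}_{p,k}^N$ is nonzero only for $(p=0,k\neq0)$ and $(p\neq0,k=0)$, with values $\tfrac{1}{2\pi^2k^2N}$ and $-\tfrac{1}{2\pi^2p^2N}$ that are exactly opposite under the exchange $p\leftrightarrow k$, and since $c_p^0{}^T\invariantmatrix c_0^0=c_0^0{}^T\invariantmatrix c_p^0$ by symmetry of $\invariantmatrix$, the two families of terms cancel pairwise and $y^T\invariantmatrix\sum_{p,k}c_p^1(y)(c_k^0(y))\widehat{\widetilde{\beta}}_{p,k}^N=0$. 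Combining with the first-order part gives $y^T\invariantmatrix G(y)=0$, whence $\invariant(Y_{m+1})=\invariant(Y_m)$. I expect this antisymmetry cancellation to be the heart of the argument: it is exactly why $\widetilde{\beta}$ rather than $\beta$ must appear in Method \ref{algorithm:geometric_weak_order_2}, since the symmetric diagonal part $\tfrac12\alpha_p\alpha_k$ would otherwise obstruct the preservation.
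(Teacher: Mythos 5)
Your proposal is correct and follows essentially the same route as the paper: a contraction argument with Lipschitz constant $\OO(H)$ for well-posedness, reduction of the weak order to the analysis of Method \ref{algorithm:weak_order_2} via the identity $\beta_{p,k}^N=\widetilde{\beta}_{p,k}^N+\tfrac12\alpha_p^N\alpha_k^N$ (which is exactly how the paper's Theorem \ref{theorem:global_convergence_theorem} absorbs the midpoint scheme), and the classical midpoint argument $\invariant(Y_{m+1})-\invariant(Y_m)=H\,\overline{Y}^T\invariantmatrix\,G(\overline{Y})$ combined with the differentiated invariance identity and the antisymmetry of $\widehat{\widetilde{\beta}}_{p,k}^N$ under $p\leftrightarrow k$. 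The only cosmetic difference is that you carry out the final cancellation coefficient-wise using the explicit values of $\widehat{\widetilde{\beta}}_{p,k}^N$, whereas the paper performs the equivalent computation by symmetrizing a double integral of $(g_\nu^0)^T\invariantmatrix g_\theta^0$ over $\theta,\nu$.
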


These two theorems focus on approximating the exact solution of equation \eqref{equation:NLS} at the revolution times $\varepsilon T_{Nm}$, $m=0,1,\dots$, but one could compute an approximation at different times by composing with other methods at the end of the integration.



\begin{remark} \label{rem:trunc}
Since the error constant $C$ in \eqref{equation:theorem_cv_estimate} is independent of the number $K_t$ of Fourier modes, we emphasize that Theorem \ref{theorem:convergence_algorithm_order_2} and Theorem \ref{theorem:convergence_algorithm_order_2_geometric} remain
valid for infinitely many modes ($K_t\rightarrow \infty$).
In addition, assuming that $F$ is of class $C^{s+1}_P$ yields a truncation error of the Fourier series in \eqref{equation:def_g_theta} of size $\OO((1+\abs{y}^K)K_t ^{-s})$ (see e.g.\ts \cite[Sect.\ts III.1.3]{Lubich08fqt}), and if $g_\theta^0$ is assumed analytic in $\theta$ (for example if $F$ is a polynomial), this error becomes exponentially small as $\OO((1+\abs{y})e^{-c K_t })$.
For simplicity of the analysis, we thus assumed in Theorem \ref{theorem:convergence_algorithm_order_2} and Theorem \ref{theorem:convergence_algorithm_order_2_geometric} that $g_\theta^0$ and $g_\theta^1$ have a finite number $K_t $ on non-zero Fourier modes in \eqref{equation:def_g_theta}.
If this assumption does not hold, the truncation errors $\OO((1+\abs{y}^K)K_t ^{-s})$ or $\OO((1+\abs{y})e^{-c K_t })$ should be added in the right-hand side of the error estimate \eqref{equation:theorem_cv_estimate}.
Let us prove it in the analytic case. We first apply the change of variable $\widetilde{\varphi}_{\varepsilon,t}(y)=e^{-AW(t)}\varphi_{\varepsilon,t}(y)$ that has no effect at time $t=T_{Nm}$. We now have to compare the two solutions of the following integral formulations
\begin{align*}
\widetilde{\varphi}_{\varepsilon,t}(y)&=y+\varepsilon\int_0^t g_{W(s)}^0(\widetilde{\varphi}_{\varepsilon,s}(y))ds,\\
\widetilde{\varphi}_{\varepsilon,t}^{(K_t)}(y)&=y+\varepsilon\int_0^t \sum_{k=-K_t/2}^{K_t/2-1} c_k^0(\widetilde{\varphi}_{\varepsilon,s}^{(K_t)}(y))e^{2i\pi k W(s)}ds.
\end{align*}
Using the truncation estimates that we previously discussed and the Lipschitz property of $g_\theta^0$, one gets
\begin{align*}
\abs{\widetilde{\varphi}_{\varepsilon,t}-\widetilde{\varphi}_{\varepsilon,t}^{(K_t)}}(y)
&\leq \varepsilon \int_0^t \abs{g_{W(s)}^0(\widetilde{\varphi}_{\varepsilon,s}(y))-g_{W(s)}^0(\widetilde{\varphi}_{\varepsilon,s}^{(K_t)}(y))} ds
+C\varepsilon t e^{-c K_t} \sup_{[0,1]} \abs{g^0(y)}\\
&\leq C\varepsilon \int_0^t \abs{\widetilde{\varphi}_{\varepsilon,s}(y)-\widetilde{\varphi}_{\varepsilon,s}^{(K_t)}(y)} ds
+C\varepsilon t (1+\abs{y})e^{-c K_t}.
\end{align*}
The Gronwall lemma and Proposition \ref{proposition:properties_T1} yield for $mN\varepsilon  \leq T$,
$$\E\left[\abs{\widetilde{\varphi}_{\varepsilon,T_{Nm}}(y)-\widetilde{\varphi}_{\varepsilon,T_{Nm}}^{(K_t)}(y)}^2\right]^{1/2}
\leq C(1+\abs{y})e^{-c K_t}.$$
\end{remark}

The structure of the convergence proof is similar to the one for standard SDE integrators, see e.g.\ts \cite[Chap.\ts 2]{Milstein04snf}, but one has to be cautious because our solution is evaluated at stochastic times and the error constants should not depend on $\varepsilon$ or $N$.

\subsection{Boundedness of the numerical moments}

\begin{proposition}[Bounded moments for the integrator \eqref{equation:order_2_scheme}]
\label{proposition:bounded_moments}
Assume that for $y\in\R^d$, the numerical integrator $\widehat{\psi}_{\varepsilon,N}(y)$ is given by
\begin{equation}
\label{equation:proof_integrator_order_2}
\widehat{\psi}_{\varepsilon,N}(y) = y
+H \sum_{k\in\Z} c_k^0(y) \widehat{\alpha}_k^{N}
+H^2 \sum_{p,k\in\Z} c_p^1(y) (c_k^0(y)) \widehat{\beta}_{p,k}^N,
\end{equation}
where $\widehat{\alpha}_k^N$, $\widehat{\beta}_{p,k}^N$ are random variables defined such that for all $q>0$, $\E\left[\left(\sum_{k} \frac{\abs{\widehat{\alpha}_k^N}^2}{k^2}\right)^{q}\right]$ and $\E\left[\left(\sum_{p,k} \frac{\abs{\widehat{\beta}_{p,k}^N}^2}{k^2}\right)^{q}\right]$ are bounded uniformly in $N$.
Then, under Assumption \ref{assumption:F_Lipschitz} and if $\abs{y}$ has bounded moments, for any $T>0$, for all $m$, $H$ such that $m\varepsilon N=mH\leq T$, for all $q>0$, we have $\E[\abs{\widehat{\psi}_{\varepsilon,N}^m(y)}^{2q}]\leq C_q(1+\E[\abs{y}^{2q}])$, where $C_q$ is independent of $m$, $\varepsilon$ and $N$.
\end{proposition}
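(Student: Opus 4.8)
The plan is to establish a one-step estimate of ``Euler type'' and then iterate it through a discrete Gronwall argument, following the template for moment bounds of SDE integrators in \cite[Chap.~2]{Milstein04snf}. The conceptual point that makes this simpler than the usual SDE case is that, since the solution is sampled at the revolution times, the numerical increment is of \emph{deterministic} size $\OO(H)$ rather than $\OO(\sqrt{H})$: the discrete variables $\widehat{\alpha}_k^N$, $\widehat{\beta}_{p,k}^N$ have bounded moments uniformly in $N$, so the map \eqref{equation:proof_integrator_order_2} behaves like an explicit one-step method applied to an ODE.

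First I would write $\widehat{\psi}_{\varepsilon,N}(y) = y + H\,\Phi(y) + H^2\,\Psi(y)$ with $\Phi(y) = \sum_k c_k^0(y)\widehat{\alpha}_k^N$ and $\Psi(y) = \sum_{p,k} c_p^1(y)(c_k^0(y))\widehat{\beta}_{p,k}^N$, and bound the conditional $2q$-th moments of $\Phi(y)$ and $\Psi(y)$. The key is to pair the decay of the Fourier coefficients against the hypotheses on the discrete variables through Cauchy--Schwarz: isolating the zeroth modes, $|\Phi(y)| \leq |c_0^0(y)|\,|\widehat{\alpha}_0^N| + \big(\sum_{k\neq 0} k^2 |c_k^0(y)|^2\big)^{1/2}\big(\sum_{k\neq 0}|\widehat{\alpha}_k^N|^2/k^2\big)^{1/2}$, and likewise $|\Psi(y)| \leq (\sum_p \norme{c_p^1(y)}^2)^{1/2}(\sum_k k^2|c_k^0(y)|^2)^{1/2}(\sum_{p,k}|\widehat{\beta}_{p,k}^N|^2/k^2)^{1/2}$ after the $k=0$ column is separated. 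By Parseval, $\sum_k k^2|c_k^0(y)|^2 = (4\pi^2)^{-1}\int_0^1 |\partial_\theta g_\theta^0(y)|^2\,d\theta$ and $\sum_p \norme{c_p^1(y)}^2 = \int_0^1 \norme{g_\theta^1(y)}^2\,d\theta$. Here I would use crucially that Assumption \ref{assumption:F_Lipschitz} forces $F$ to grow linearly and $F'$ to be bounded (global Lipschitz), which, together with the boundedness of $\theta \mapsto e^{A\theta}$ on $[0,1]$, makes both integrals grow at most linearly in $y$; the zeroth modes $\widehat{\alpha}_0^N$, $\widehat{\beta}_{\cdot,0}^N$ are controlled by their own boundedness. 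Combining with H\"older and the uniform-in-$N$ moment hypotheses yields $\E[|\Phi(y)|^{2q}\mid y]^{1/2q} \leq C(1+|y|)$ and the same for $\Psi$, with $C$ independent of $\varepsilon$, $N$ and the number of modes.

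Next I would deduce the one-step recursion. Setting $G(y) := |\Phi(y)| + H|\Psi(y)|$, which inherits $\E[G(y)^{2q}\mid y]^{1/2q}\leq C(1+|y|)$, I expand $(|y| + HG(y))^{2q}$ by the binomial theorem; each term with $j\geq 1$ factors of $G$ carries a power $H^j$, and $\E[G^j\mid y]\leq (C(1+|y|))^j$ by the Lyapunov inequality, so that $|y|^{2q-j}(1+|y|)^j \leq C(1+|y|^{2q})$ gives $\E[|\widehat{\psi}_{\varepsilon,N}(y)|^{2q}\mid y] \leq |y|^{2q} + CH(1+|y|^{2q})$, using $H\leq T$. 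Applying this at each step, with the fresh step randomness independent of the current iterate $y_m = \widehat{\psi}_{\varepsilon,N}^m(y)$, and taking expectations yields $\E[|y_{m+1}|^{2q}] \leq (1+CH)\E[|y_m|^{2q}] + CH$. The discrete Gronwall lemma then gives $\E[|y_m|^{2q}]\leq e^{CmH}(1+\E[|y|^{2q}]) \leq e^{CT}(1+\E[|y|^{2q}])$ for all $mH\leq T$, which is the claim with $C_q = e^{CT}$ independent of $m$, $\varepsilon$ and $N$. I expect the main obstacle to be precisely the uniform moment control of the infinite Fourier sums $\Phi$ and $\Psi$: one must select the correct Parseval identities and exploit the global Lipschitz regularity to obtain linear (not merely polynomial) growth in $y$, while matching the $1/k^2$ weights of the hypotheses to the $k\,c_k^0(y)$ factors and treating the zeroth modes apart. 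Once these linear-growth moment estimates are secured, the passage to the global bound is the standard Gronwall step.
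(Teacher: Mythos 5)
Your proposal is correct and follows essentially the same route as the paper's proof: the one-step bound $\abs{\widehat{\psi}_{\varepsilon,N}(y)-y}\leq CH(1+\abs{y})M_N$ obtained by Cauchy--Schwarz pairing the $1/k^2$ weights of $\widehat{\alpha}_k^N$, $\widehat{\beta}_{p,k}^N$ against $k^2\abs{c_k^0(y)}^2$ and $\abs{c_p^1(y)}^2$ via Bessel--Parseval and the Lipschitz bound on $F$, followed by a binomial expansion of the $2q$-th power and a discrete Gronwall/induction step. Your explicit separation of the zeroth modes is a minor (and welcome) refinement of the same argument rather than a different approach.
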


\begin{proof}
We first prove
\begin{equation}
\label{equation:inequality_bounded_moments}
\abs{\widehat{\psi}_{\varepsilon,N}(y)-y}\leq CH(1+\abs{y})M_N,
\end{equation}
where $\E[(M_N)^{2q}]\leq C_q$ for all $q>0$. We have
\begin{align*}
\abs{\widehat{\psi}_{\varepsilon,N}(y)-y}&= H\abs{\sum_{k\in\Z} c_k^0(y) \widehat{\alpha}_k^N+H\sum_{p,k\in\Z} c_p^1(y)(c_k^0(y)) \widehat{\beta}_{p,k}^N}\\
&\leq C H \left(M_N^{(0)} \sqrt{\sum_{k\in\Z} k^2 \abs{c_k^0(y)}^2}+ M_N^{(1)} \sqrt{\sum_{p\in\Z} \abs{c_p^1(y)}^2} \sqrt{\sum_{k\in\Z} k^2 \abs{c_k^0(y)}^2} \right)
\end{align*}
where $M_N^{(0)}=\sqrt{\sum_k \frac{\abs{\widehat{\alpha}_k^N}^2}{k^2}}$ and $M_N^{(l)}=\sqrt{\sum_{p,k} \frac{\abs{\widehat{\beta}_{p,k}^N}^2}{k^2}}$ have moments bounded uniformly in $N$.
Then using the Bessel-Parseval theorem, we get $\sum_k k^2 \abs{c_k^0(y)}^2=\int_0^1 \abs{\partial_\theta g_\theta(y)^0}^2 d\theta$.
Assumption \ref{assumption:F_Lipschitz} yields $\abs{\partial_\theta g_\theta^0(y)}\leq C(1+\abs{y})$. Then, the Bessel-Parseval theorem applied on $g_\theta^1$ gives $\sqrt{\sum_p \abs{c_p^1(y)}^2}\leq C$, hence the result.

We define $\Delta \psi_m=\widehat{\psi}_{\varepsilon,N}^{m+1}(y)-\widehat{\psi}_{\varepsilon,N}^{m}(y)=(\widehat{\psi}_{\varepsilon,N}-\Id)(\widehat{\psi}_{\varepsilon,N}^{m}(y))$, then
$$(\widehat{\psi}_{\varepsilon,N}^{m+1}(y))^{2q}=(\widehat{\psi}_{\varepsilon,N}^{m}(y))^{2q}
+\sum_{j=1}^{2q} \binom{2l}{j}(\widehat{\psi}_{\varepsilon,N}^{m}(y))^{2q-j} \Delta \psi_m^j . $$
Equation \eqref{equation:inequality_bounded_moments} and the bounded moments of $M_N$ give
\begin{align*}
\abs{\E\left[(\widehat{\psi}_{\varepsilon,N}^{m}(y))^{2q-j} \Delta \psi_m^j\right]} &\leq \E\left[\abs{\widehat{\psi}_{\varepsilon,N}^{m}(y)}^{2q-j} CH^j(1+\abs{\widehat{\psi}_{\varepsilon,N}^{m}(y)}^j)M_N^j\right]\\
&\leq C_q H\left(1+\E\left[\abs{\widehat{\psi}_{\varepsilon,N}^{m}(y)}^{2q}\right]\right).
\end{align*}
We deduce $$1+\E\left[\abs{\widehat{\psi}_{\varepsilon,N}^{m+1}(y)}^{2q}\right]\leq e^{C_q H} \left(1+\E\left[\abs{\widehat{\psi}_{\varepsilon,N}^{m}(y)}^{2q}\right]\right)$$
and by induction $\E\left[\abs{\widehat{\psi}_{\varepsilon,N}^{m}(y)}^{2q}\right]\leq e^{C_q mH} (1+\E[\abs{y}^{2q}])\leq e^{C_q T} (1+\E[\abs{y}^{2q}])$.
\end{proof}

\begin{proposition}[Bounded moments for the integrator \eqref{equation:geometric_order_2_scheme}]
\label{proposition:bounded_moments_geometric}
Assume that for $y\in\R^d$, the numerical scheme $\widehat{\psi}_{\varepsilon,N}(y)$ satisfies 
\begin{align}
\label{equation:proof_geometric_integrator_order_2}
\widehat{\psi}_{\varepsilon,N}(y) &= y
+H \sum_{k\in\Z} c_k^0\left(\frac{y+\widehat{\psi}_{\varepsilon,N}(y)}{2}\right) \widehat{\alpha}_k^{N}\\
&+H^2 \sum_{p,k\in\Z} c_p^1\left(\frac{y+\widehat{\psi}_{\varepsilon,N}(y)}{2}\right) \left(c_k^0\left(\frac{y+\widehat{\psi}_{\varepsilon,N}(y)}{2}\right)\right) \widehat{\widetilde{\beta}}_{p,k}^N, \nonumber
\end{align}
where $\widehat{\alpha}_k^N$, $\widehat{\widetilde{\beta}}_{p,k}^N$ are random variables defined such that for all $q>0$, $\sum_{k} \abs{\widehat{\alpha}_k^N}$, $\sum_{p,k} \abs{\widehat{\widetilde{\beta}}_{p,k}^N}$, $\E\left[\left(\sum_k \frac{\abs{\widehat{\alpha}_k^N}^2}{k^2}\right)^{q}\right]$ and $\E\bigg[\bigg(\sum_{p,k} \frac{\abs{\widehat{\widetilde{\beta}}_{p,k}^N}^2}{k^2}\bigg)^{q}\bigg]$ are bounded uniformly in $N$.
Then, under Assumption \ref{assumption:F_Lipschitz} and if $\abs{y}$ has bounded moments, for $H_0$ small enough and any $T>0$, for all $m$, $H$ such that $m\varepsilon N=mH\leq T$ and $H\leq H_0$, for all $q>0$, we have $\E[\abs{\widehat{\psi}_{\varepsilon,N}^m(y)}^{2q}]\leq C_q(1+\E[\abs{y}^{2q}])$, where $C_q$ is independent of $m$, $\varepsilon$ and $N$.
\end{proposition}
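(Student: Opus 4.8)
The plan is to reproduce the three-step strategy of Proposition~\ref{proposition:bounded_moments}, namely a one-step increment bound of the form \eqref{equation:inequality_bounded_moments} followed by a binomial expansion and a discrete Gronwall argument, while prepending a well-posedness step to account for the implicit nature of \eqref{equation:proof_geometric_integrator_order_2}.

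First I would show that the step map is well defined for $H\leq H_0$. Writing the right-hand side of \eqref{equation:proof_geometric_integrator_order_2} as $z\mapsto\Phi(z)$, with $z=\widehat\psi_{\varepsilon,N}(y)$ and argument $w=\frac{y+z}{2}$, the claim is that $\Phi$ is a contraction for $H$ small. Here Assumption~\ref{assumption:F_Lipschitz} is used twice: the global Lipschitz continuity of $F$ together with the boundedness of $\theta\mapsto e^{A\theta}$ makes each $c_k^0$ Lipschitz with a constant uniform in $k$ (differentiate under the Fourier integral), and it also bounds $F'$ uniformly, so that the leading $\OO(H)$ part of $\Phi$ has Lipschitz constant $\leq CH\sum_k\abs{\widehat\alpha_k^N}$. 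The extra hypotheses of this proposition, namely the uniform bounds on $\sum_k\abs{\widehat\alpha_k^N}$ and $\sum_{p,k}\abs{\widehat{\widetilde{\beta}}_{p,k}^N}$ (absent in Proposition~\ref{proposition:bounded_moments}), are precisely what make this constant $\OO(H)$ and independent of $\varepsilon$, $N$; the $\OO(H^2)$ quadratic part is controlled on the relevant ball as a higher-order perturbation. For $H_0$ small the Banach fixed point theorem then yields a unique $\widehat\psi_{\varepsilon,N}(y)$, which is the origin of the restriction $H\leq H_0$.

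Second I would establish the one-step bound $\abs{\widehat\psi_{\varepsilon,N}(y)-y}\leq CH(1+\abs y)M_N$ with $\E[M_N^{2q}]\leq C_q$, where $M_N$ is built from the weighted $\ell^2$ norms of $\widehat\alpha^N$ and $\widehat{\widetilde{\beta}}^N$ as in the proof of Proposition~\ref{proposition:bounded_moments}. The Cauchy--Schwarz and Bessel--Parseval mechanism of that proof carries over: one bounds $\sum_k k^2\abs{c_k^0(w)}^2=\int_0^1\abs{\partial_\theta g_\theta^0(w)}^2\,d\theta\leq C(1+\abs w)^2$ and $\sum_p\abs{c_p^1(w)}^2=\int_0^1\abs{g_\theta^1(w)}^2\,d\theta\leq C$ (the latter bounded because the global Lipschitz property forces $F'$ to be bounded). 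The only new feature is that these quantities are evaluated at the midpoint $w=\frac{y+\widehat\psi_{\varepsilon,N}(y)}{2}$ rather than at $y$, producing an implicit scalar inequality $D\leq CH(1+\abs y+\frac{1}{2}D)M_N$ for $D=\abs{\widehat\psi_{\varepsilon,N}(y)-y}$; using the a.s.\ boundedness of $\sum_k\abs{\widehat\alpha_k^N}$ and $\sum_{p,k}\abs{\widehat{\widetilde{\beta}}_{p,k}^N}$, hence of $M_N$, one absorbs the $\frac{CH}{2}M_N D$ term for $H\leq H_0$ and recovers the desired bound.

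Finally, the passage from the one-step bound to the global moment estimate is identical to the end of the proof of Proposition~\ref{proposition:bounded_moments}: expanding $\abs{\widehat\psi_{\varepsilon,N}^{m+1}(y)}^{2q}$ by the binomial formula, taking expectations, and inserting the one-step bound together with $\E[M_N^{2q}]\leq C_q$ yields $1+\E[\abs{\widehat\psi_{\varepsilon,N}^{m+1}(y)}^{2q}]\leq e^{C_qH}(1+\E[\abs{\widehat\psi_{\varepsilon,N}^{m}(y)}^{2q}])$, which iterates over $mH\leq T$ to the claimed bound with $C_q$ independent of $m$, $\varepsilon$ and $N$. I expect the main obstacle to be the well-posedness step and the associated uniform control of the increment: because the scheme feeds back through the midpoint $w$, one cannot simply evaluate the Fourier coefficients at $y$ as in the explicit case, and the a.s.\ summability bounds on the discrete random variables must be exploited to keep both the contraction constant and the increment $\OO(H)$ while ensuring that all constants remain independent of $\varepsilon$, $N$ and $m$.
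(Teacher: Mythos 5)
Your proposal is correct and follows essentially the same route as the paper: the same one-step increment bound with the midpoint feedback absorbed via the uniform bounds on $\sum_k\abs{\widehat\alpha_k^N}$ and $\sum_{p,k}\abs{\widehat{\widetilde{\beta}}_{p,k}^N}$ for $H\leq H_0$, followed by the identical binomial-expansion and discrete Gronwall argument of Proposition \ref{proposition:bounded_moments}. The only difference is that you prepend a Banach fixed-point well-posedness step, which the paper defers to the proof of Theorem \ref{theorem:convergence_algorithm_order_2_geometric}; this is a harmless (indeed tidy) addition.
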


\begin{proof}
We prove an equivalent of the estimate \eqref{equation:inequality_bounded_moments} for $\widehat{\psi}_{\varepsilon,N}(y)$.
The growth properties of the Fourier coefficients yield 
\begin{align*}
\abs{\widehat{\psi}_{\varepsilon,N}(y)-y}
&\leq 
CH \left(\sum_k \abs{c_k^0}(y) \abs{\widehat{\alpha}_k^N}+\sum_{p,k} \abs{c_k^0}(y)  \abs{\widehat{\widetilde{\beta}}_{p,k}^N}\right)\\
&+CH \left(\sum_k \abs{\widehat{\alpha}_k^N}+\sum_{p,k} \abs{\widehat{\widetilde{\beta}}_{p,k}^N}\right)\abs{\widehat{\psi}_{\varepsilon,N}(y)-y},
\end{align*}
As $\sum_k \abs{\widehat{\alpha}_k^N}+\sum_{p,k} \abs{\widehat{\widetilde{\beta}}_{p,k}^N}$ is bounded, using the same estimates as in the proof of Proposition \ref{proposition:bounded_moments}, we get for all $H\leq H_0$ small enough,
\begin{equation}
\label{equation:inequality_bounded_moments_geometric}
\abs{\widehat{\psi}_{\varepsilon,N}(y)-y} \leq CH(1+\abs{y})M_N,
\end{equation}
where $M_N$ has bounded moments.
The remaining of the proof is the same as in the proof of Proposition \ref{proposition:bounded_moments}.
\end{proof}

\subsection{Local weak error}

\begin{proposition}[Local error estimate]
\label{proposition:local_weak_error}
Assume that for $y\in\R^d$ deterministic, the numerical scheme can be written as
$$
\widehat{\psi}_{\varepsilon,N}(y)=y+H\sum_{k\in\Z} c_k^0(y) \widehat{\alpha}_k^N+H^2\sum_{p,k\in\Z} c_p^1(y)(c_k^0(y)) \widehat{\beta}_{p,k}^N+R,
$$
where $\E[\abs{R}]\leq C(1+\abs{y}^K)H^3$ and $\widehat{\alpha}_k^N\in\C$, $\widehat{\beta}_{p,k}^N\in\R$ are random variables such that $\widehat{\alpha}_k^N=\overline{\widehat{\alpha}_{-k}^N}$ and
$$
\E[\widehat{\alpha}_k^N]=\E[\alpha_k^N],\
\E[\widehat{\beta}_{p,k}^N]=\E[\beta_{p,k}^N],\
\E[\widehat{\alpha}_{k_1}^N\widehat{\alpha}_{k_2}^N]=\E[\alpha_{k_1}^N\alpha_{k_2}^N].
$$
Under Assumption \ref{assumption:F_Lipschitz}, if $\widehat{\psi}_{\varepsilon,N}(y)$ satisfies the assumptions of Proposition \ref{proposition:bounded_moments} (or Proposition \ref{proposition:bounded_moments_geometric}), for all test function $\phi \in \CC^3_{P}$, there exists $H_0>0$ such that for all $H=N\varepsilon\leq H_0$, the following estimate holds, where $C$ and $K$ are independent of $\varepsilon$ and $N$,
$$\abs{\E[\phi(\varphi_{\varepsilon,T_N}(y))]-\E[\phi(\widehat{\psi}_{\varepsilon,N}(y))]}\leq C(1+\abs{y}^K)H^3,$$
that is, the numerical scheme has weak local order two.
\end{proposition}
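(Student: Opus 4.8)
The plan is to use the weak local order two of the exact expansion $\psi_{\varepsilon,N}$ from Proposition \ref{proposition:strong_Taylor_development_hitting_times} as a pivot. Writing $D := \psi_{\varepsilon,N}(y)-y = H\sum_k c_k^0(y)\alpha_k^N + H^2\sum_{p,k} c_p^1(y)(c_k^0(y))\beta_{p,k}^N$, and letting $\widehat{D}$ denote the same expression with $(\alpha_k^N,\beta_{p,k}^N)$ replaced by $(\widehat{\alpha}_k^N,\widehat{\beta}_{p,k}^N)$, so that $\widehat{\psi}_{\varepsilon,N}(y) = y + \widehat{D} + R$, the triangle inequality bounds $\abs{\E[\phi(\varphi_{\varepsilon,T_N}(y))]-\E[\phi(\widehat{\psi}_{\varepsilon,N}(y))]}$ by
$$\abs{\E[\phi(\varphi_{\varepsilon,T_N}(y))]-\E[\phi(y+D)]} + \abs{\E[\phi(y+D)]-\E[\phi(y+\widehat{D})]} + \abs{\E[\phi(y+\widehat{D})]-\E[\phi(\widehat{\psi}_{\varepsilon,N}(y))]},$$
where the first term is already $\OO((1+\abs{y}^K)H^3)$ by \eqref{equation:weak_local_order_2_estimate} (since $\psi_{\varepsilon,N}(y)=y+D$). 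It thus remains to bound the last two terms.

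For the remainder term I would use the mean value identity $\phi(y+\widehat{D}+R)-\phi(y+\widehat{D}) = \int_0^1 \phi'(y+\widehat{D}+sR)R\,ds$ and split $\phi'(y+\widehat{D}+sR) = \phi'(y) + [\phi'(y+\widehat{D}+sR)-\phi'(y)]$. The contribution of $\phi'(y)$ gives $\phi'(y)\E[R]$, which is $\OO((1+\abs{y}^K)H^3)$ since $\phi'(y)$ is deterministic of polynomial growth and $\E[\abs{R}]\le C(1+\abs{y}^K)H^3$. For the second contribution, the polynomial growth of $\phi''$ and the a priori bound $\abs{\widehat{D}+sR}\le CH(1+\abs{y})M_N$ (with $M_N$ of moments bounded uniformly in $N$, exactly as in \eqref{equation:inequality_bounded_moments}, using the hypotheses of Proposition \ref{proposition:bounded_moments} or \ref{proposition:bounded_moments_geometric}) reduce matters, up to polynomial factors in $\abs{y}$, to bounding $\E[M_N\abs{R}]$; combining the almost sure bound $\abs{R}\le CH(1+\abs{y})M_N$ with $\E[\abs{R}]\le CH^3$ through the interpolation $\abs{R}\le\sqrt{CH(1+\abs{y})M_N}\,\sqrt{\abs{R}}$ and Cauchy-Schwarz then yields $\OO((1+\abs{y}^K)H^3)$.

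The core term $\abs{\E[\phi(y+D)]-\E[\phi(y+\widehat{D})]}$ is treated by a second order Taylor expansion about the deterministic point $y$ with Lagrange remainder,
$$\phi(y+D)=\phi(y)+\phi'(y)D+\frac{1}{2}\phi''(y)(D,D)+\frac{1}{6}\phi'''(\zeta)(D,D,D),$$
and identically for $\widehat{D}$. Since $\phi'(y)$ and $\phi''(y)$ are deterministic, the expectation of the linear term is $\phi'(y)\E[D]$, depending only on $\E[\alpha_k^N]$ and $\E[\beta_{p,k}^N]$, while the leading part of the quadratic term is $\frac{H^2}{2}\sum_{k_1,k_2}\phi''(y)(c_{k_1}^0(y),c_{k_2}^0(y))\E[\alpha_{k_1}^N\alpha_{k_2}^N]$, depending only on $\E[\alpha_{k_1}^N\alpha_{k_2}^N]$; the remaining pieces of the quadratic term pair the $\OO(H)$ and $\OO(H^2)$ parts of $D$ and are $\OO(H^3)$. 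By the moment matching hypotheses $\E[\widehat{\alpha}_k^N]=\E[\alpha_k^N]$, $\E[\widehat{\beta}_{p,k}^N]=\E[\beta_{p,k}^N]$ and $\E[\widehat{\alpha}_{k_1}^N\widehat{\alpha}_{k_2}^N]=\E[\alpha_{k_1}^N\alpha_{k_2}^N]$, all these $\OO(H)$ and leading $\OO(H^2)$ contributions are identical for $D$ and $\widehat{D}$ and cancel in the difference.

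It then remains to control, uniformly in $\varepsilon$ and $N$, the cubic Lagrange remainders and the subleading quadratic cross terms. For these I would use $\abs{\phi'''}\le C(1+\abs{\cdot}^p)$ together with $\abs{\zeta}\le\abs{y}+\abs{D}$, reducing to $\E[(1+\abs{y}^p+\abs{D}^p)\abs{D}^3]$, and then bound $\abs{D}\le CH(1+\abs{y})M_N$ as in \eqref{equation:inequality_bounded_moments}, where the Fourier sums are handled by Cauchy-Schwarz and the Bessel-Parseval identities $\sum_k k^2\abs{c_k^0(y)}^2=\int_0^1\abs{\partial_\theta g_\theta^0(y)}^2\,d\theta\le C(1+\abs{y})^2$ and $\sum_p\abs{c_p^1(y)}^2\le C$. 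The same bounds apply to $\widehat{D}$, the moments of $\alpha_k^N,\beta_{p,k}^N$ being controlled through the exponential estimate \eqref{equation:properties_T1_exponential_estimate} and the stochastic Bessel-Parseval identities, and those of $\widehat{\alpha}_k^N,\widehat{\beta}_{p,k}^N$ through the hypotheses of Proposition \ref{proposition:bounded_moments} (or \ref{proposition:bounded_moments_geometric}). Collecting everything gives the announced $\OO((1+\abs{y}^K)H^3)$. The main obstacle is precisely this last uniform estimate of the cubic remainder against the polynomial growth of $\phi'''$: one must simultaneously control high moments of both the exact and the discrete stochastic increments, and guarantee convergence of the Fourier series over all modes $k,p\in\Z$ with the correct $(1+\abs{y}^K)$ weight, for which the $k^{-2}$ weighting built into the moment hypotheses on $\widehat{\alpha}_k^N$ and $\widehat{\beta}_{p,k}^N$ is essential.
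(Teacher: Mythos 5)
Your proof is correct and follows essentially the same strategy as the paper's: reduce to comparing with the exact-coefficient expansion $\psi_{\varepsilon,N}(y)$ via Proposition \ref{proposition:strong_Taylor_development_hitting_times}, Taylor-expand $\phi$ to second order about the deterministic point $y$ so that the moment-matching conditions on $\widehat{\alpha}_k^N$, $\widehat{\beta}_{p,k}^N$ cancel the first-order and leading second-order terms, and control all remainders through the $M_N$-type bounds of Propositions \ref{proposition:bounded_moments}--\ref{proposition:bounded_moments_geometric} and the moment estimates of Proposition \ref{proposition:properties_T1}. Your explicit three-term splitting and the interpolation argument for the term involving $R$ are organizational refinements (the paper absorbs $R$ more tersely) rather than a different method.
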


\begin{proof}
Using Proposition \ref{proposition:strong_Taylor_development_hitting_times} and its notation $\psi_{\varepsilon,N}(y)$, it is enough to prove that
$$\abs{\E[\phi(\psi_{\varepsilon,N}(y))]-\E[\phi(\widehat{\psi}_{\varepsilon,N}(y))]}\leq C (1+\abs{y}^K) H^3.$$
A local expansion gives 
$$\phi(\psi_{\varepsilon,N}(y))=\phi(y)+\phi'(y)(\psi_{\varepsilon,N}(y)-y)+\phi''(y)(\psi_{\varepsilon,N}(y)-y,\psi_{\varepsilon,N}(y)-y)+R_1.$$
As $\psi_{\varepsilon,N}(y)=\psi_{\varepsilon,T_N}^2(y)$ (see equation \eqref{equation:strong_Taylor_development}), using Inequalities \eqref{equation:Taylor_bound_2}, \eqref{equation:Taylor_bound_3} evaluated at $T_N$ and Proposition \ref{proposition:properties_T1} yield
\begin{align*}
\E[\abs{R_1}]&\leq \E\left[\sup_{x \in [y , \psi_{\varepsilon,N}(y)]} \abs{\phi^{(3)}(x)} \abs{\psi_{\varepsilon,N}(y)-y}^3\right]\\
&\leq \E\left[C(1+\abs{y}^K+\abs{\psi_{\varepsilon,N}(y)}^K) (1+\abs{y}^K) (\varepsilon T_N)^3 e^{C\varepsilon T_N} \right]\\
&\leq \E\left[C (1+\abs{y}^K) (\varepsilon T_N)^3 e^{C\varepsilon T_N} \right]\\
&\leq C (1+\abs{y}^K) H^3.
\end{align*}
We obtain a similar expansion for $\phi(\widehat{\psi}_{\varepsilon,N}(y))$:
$$\phi(\widehat{\psi}_{\varepsilon,N}(y))=\phi(y)+\phi'(y)(\widehat{\psi}_{\varepsilon,N}(y)-y)+\phi''(y)(\widehat{\psi}_{\varepsilon,N}(y)-y,\widehat{\psi}_{\varepsilon,N}(y)-y)+\widehat{R_1},$$
where, using Inequality \eqref{equation:inequality_bounded_moments} (or \eqref{equation:inequality_bounded_moments_geometric}),
\begin{align*}
\E[|\widehat{R_1}|]&\leq \E\left[\sup_{x \in [y , \widehat{\psi}_{\varepsilon,N}(y)]} \abs{\phi^{(3)}(x)} \abs{\widehat{\psi}_{\varepsilon,N}(y)-y}^3\right]\\
&\leq C\E\left[(1+\abs{y}^K+\abs{\widehat{\psi}_{\varepsilon,N}(y)}^K) (1+\abs{y}^K) H^3 M_N^3 \right]\\
&\leq C(1+\abs{y}^K) H^3 \E\left[(1+M_N^K) \right]\\
&\leq C(1+\abs{y}^K) H^3.
\end{align*}
Making the difference of both equations gives 
\begin{align}
\label{equation:proof_local_difference}
\phi(\psi_{\varepsilon,N}(y))-\phi(\widehat{\psi}_{\varepsilon,N}(y))&=\phi'(y)(\psi_{\varepsilon,N}(y)-\widehat{\psi}_{\varepsilon,N}(y))-\phi''(y)(\psi_{\varepsilon,N}(y)-y)^2\\
&+\phi''(y)(\widehat{\psi}_{\varepsilon,N}(y)-y)^2+R,\nonumber
\end{align}
where $\E[\abs{R}]\leq C (1+\abs{y}^K) H^3$.
For the first term of \eqref{equation:proof_local_difference}, we have
\begin{align*}
\E[\phi'(y)(\psi_{\varepsilon,N}(y)-\widehat{\psi}_{\varepsilon,N}(y))]&=
H\sum_{k\in\Z} \E[\phi'(y)(c_k^0(y) (\alpha_k^N-\widehat{\alpha}_k^N))]\\
&+H^2\sum_{p,k\in\Z} \E[\phi'(y)(c_p^1(y)(c_k^0(y)) (\beta_{p,k}^N-\widehat{\beta}_{p,k}^N))].
\end{align*}
Then, we get
$$\E[\phi'(y)(c_k^0(y) (\alpha_k^N-\widehat{\alpha}_k^N))]=\E[\alpha_k^N-\widehat{\alpha}_k^N] \phi'(y)(c_k^0(y))=0.$$
We can do the same thing with the term in $\beta_{p,k}^N$ and obtain 
$$\E[\phi'(y)(\psi_{\varepsilon,N}(y)-\widehat{\psi}_{\varepsilon,N}(y))]=0.$$
Let us now study the second order term $Z=\phi''(y)(\widehat{\psi}_{\varepsilon,N}(y)-y)^2 -\phi''(y)(\psi_{\varepsilon,N}(y)-y)^2$ that appears in \eqref{equation:proof_local_difference}. We develop this expression and keep only the order one and two terms to obtain $Z=H^2 Y+R$ where $\E[\abs{R}]\leq C(1+\abs{y}^K)H^3$ (by the same arguments as before) and
\begin{align*}
Y&=\sum_{k_1,k_2} \left[ \phi''(y)(c_{k_1}^0(y) \widehat{\alpha}_{k_1}^N ,c_{k_2}^0(y) \widehat{\alpha}_{k_2}^N)-\phi''(y)(c_{k_1}^0(y) \alpha_{k_1}^N ,c_{k_2}^0(y) \alpha_{k_2}^N)\right]\\
&=\sum_{k_1,k_2} (\widehat{\alpha}_{k_1}^N \widehat{\alpha}_{k_2}^N-\alpha_{k_1}^N \alpha_{k_2}^N) \phi''(y)(c_{k_1}^0(y),c_{k_2}^0(y))
\end{align*}
The condition on the moments of the $\widehat{\alpha}_k^N$ yields $\E[Y]=0$.

Putting all these arguments together in \eqref{equation:proof_local_difference}, we finally get that
$$\abs{\E[\phi(\psi_{\varepsilon,N}(y))]-\E[\phi(\widehat{\psi}_{\varepsilon,N}(y))]}\leq C(1+\abs{y}^K)H^3.$$
We deduce the local order two of the proposed numerical scheme.
\end{proof}

\begin{remark*}
The constant $H_0$ in Proposition \ref{proposition:local_weak_error} depends on $F$, but also of the polynomial growth power of $\phi$ and its first three derivatives.
This dependence is expected when trying to evaluate the solution of SDEs at random times.
To make $H_0$ independent of the test functions, one can consider the following sets of test functions
$$\CC^3_{P,K}=\{\phi \in \CC^3, \exists C>0,\exists k\leq K,\forall y, \abs{\phi^{(i)}(y)}\leq C(1+\abs{y}^k), i\in \{0,1,2,3\}\}.$$
\end{remark*}

\subsection{Global error}
\label{section:global_error_proof}

\begin{theorem}[Global convergence]
\label{theorem:global_convergence_theorem}
Assume that the numerical scheme $\widehat{\psi}_{\varepsilon,N}$ satisfies equation \eqref{equation:proof_integrator_order_2} (respectively equation \eqref{equation:proof_geometric_integrator_order_2})
where $\widehat{\alpha}_k^N\in\C$, $\widehat{\beta}_{p,k}^N\in \R$ (respectively $\widehat{\widetilde{\beta}}_{p,k}^N\in \R$) are random variables such that $\widehat{\alpha}_k^N=\overline{\widehat{\alpha}_{-k}^N}$ and
$$
\E[\widehat{\alpha}_k^N]=\E[\alpha_k^N],\
\E[\widehat{\beta}_{p,k}^N]=\E[\beta_{p,k}^N],\
\E[\widehat{\alpha}_{k_1}^N\widehat{\alpha}_{k_2}^N]=\E[\alpha_{k_1}^N\alpha_{k_2}^N].
$$
(respectively $\widehat{\alpha}_k^N$ satisfies the same conditions and $\widehat{\widetilde{\beta}}_{p,k}^N$ satisfies $\E[\widehat{\widetilde{\beta}}_{p,k}^N]=\E[\widetilde{\beta}_{p,k}^N]$).
Under Assumption \ref{assumption:F_Lipschitz}, if for all $q>0$, $\E\left[\left(\sum_k \frac{\abs{\widehat{\alpha}_k^N}^2}{k^2}\right)^{q}\right]$ and $\E\left[\left(\sum_{p,k} \frac{\abs{\widehat{\beta}_{p,k}^N}^2}{k^2}\right)^{q}\right]$ are bounded uniformly in $N$
(respectively $\sum_{k} \abs{\widehat{\alpha}_k^N}$, $\sum_{p,k} \abs{\widehat{\widetilde{\beta}}_{p,k}^N}$, $\E\left[\left(\sum_k \frac{\abs{\widehat{\alpha}_k^N}^2}{k^2}\right)^{q}\right]$ and $\E\bigg[\bigg(\sum_{p,k} \frac{\abs{\widehat{\widetilde{\beta}}_{p,k}^N}^2}{k^2}\bigg)^{q}\bigg]$ are bounded uniformly in $N$), for all $T>0$, for all test function $\phi \in \CC^3_P$, there exists $H_0>0$ such that for all $H\leq H_0$, for all $M\geq 0$ such that $MN\varepsilon=MH\leq T$, there exists two positive constants $K$ and $C$ both independent of $\varepsilon$ and $N$ such that
$$\abs{\E[\phi(\varphi_{\varepsilon,T_{NM}}(X_0))]-\E[\phi(\widehat{\psi}_{\varepsilon,N}^M(X_0))]}\leq CH^2(1+\E[\abs{X_0}^{K}]).$$
\end{theorem}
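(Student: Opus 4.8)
The plan is to establish global weak convergence of order two from the local error estimate by the standard telescoping argument for one-step methods, adapted carefully so that all constants remain independent of $\varepsilon$ and $N$. The key structural fact is that the exact solution propagator $\varphi_{\varepsilon,T_{Nm}}$ enjoys a semigroup property at revolution times: because the increments $(T_{N+1}-T_N)$ are i.i.d.\ (Proposition \ref{proposition:properties_T1}) and the dynamics \eqref{equation:NLS_rescaled} is autonomous with independent Brownian increments on disjoint intervals, the flow over $M$ blocks of $N$ revolutions factorizes into $M$ independent copies of $\varphi_{\varepsilon,T_N}$, each starting from the previous endpoint. The numerical scheme $\widehat\psi_{\varepsilon,N}^M$ is an $M$-fold composition by construction. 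So the global error is a sum of local errors propagated forward.

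\smallskip

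First I would introduce the telescoping decomposition. Writing $\Phi=\varphi_{\varepsilon,T_N}$ for one exact block and $\widehat\Psi=\widehat\psi_{\varepsilon,N}$ for one numerical step, and denoting by $u(y)=\E[\phi(\Phi^{M-m-1}(y))]$ the exact solution evaluated after the remaining $M-m-1$ blocks, the difference telescopes as
\begin{equation}
\label{equation:global_telescope}
\E[\phi(\Phi^M(X_0))]-\E[\phi(\widehat\Psi^M(X_0))]
=\sum_{m=0}^{M-1}\E\left[u(\Phi(\widehat\Psi^m(X_0)))-u(\widehat\Psi(\widehat\Psi^m(X_0)))\right].
\end{equation}
Here I use the Markov/independence structure to condition on $Y_m:=\widehat\Psi^m(X_0)$: the inner block $\Phi$ and step $\widehat\Psi$ both act on a fixed (conditioned) initial value and are independent of the first $m$ steps. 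The summand is then exactly a local weak error of the type controlled by Proposition \ref{proposition:local_weak_error}, provided the test function $u$ lies in $\CC^3_P$ with polynomial-growth constants uniform in $m$ and $\varepsilon$.

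\smallskip

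Thus the two things to verify are: (i) the regularity of the propagated test function $u$, and (ii) the boundedness of the numerical moments so that the local estimate, which carries a factor $(1+\abs{y}^K)$, can be summed against $y=Y_m$. For (ii) I would invoke directly Proposition \ref{proposition:bounded_moments} (respectively Proposition \ref{proposition:bounded_moments_geometric} in the geometric case), whose hypotheses on the discrete random variables are precisely the ones assumed here; this gives $\E[\abs{Y_m}^{2q}]\le C_q(1+\E[\abs{X_0}^{2q}])$ uniformly in $m$, $\varepsilon$, $N$. For (i), I would show that $y\mapsto\E[\phi(\Phi^{M-m-1}(y))]$ inherits $\CC^3_P$ regularity from $\phi$ using the derivative bounds on $\varphi_{\varepsilon,t}$ from Lemma \ref{lemma:regularity_varphi} part 3, chained over the $M-m-1$ blocks via the semigroup property; the exponential factors $e^{C\varepsilon T_N}$ combine through Proposition \ref{proposition:properties_T1} into $e^{C\varepsilon NM}\le e^{CT}$, keeping the polynomial-growth constant of $u$ bounded uniformly. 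Applying Proposition \ref{proposition:local_weak_error} to each summand then yields a bound $C(1+\E[\abs{Y_m}^K])H^3$, and summing over the $M\le T/H$ terms produces the claimed $O(H^2)$ global rate.

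\smallskip

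\textbf{The main obstacle} I anticipate is the regularity step (i): controlling three derivatives of the propagated functional $u$ uniformly in $\varepsilon$ and $N$. One must differentiate the composition $\Phi^{M-m-1}$ under the expectation and show the resulting products of $\varphi_{\varepsilon,T_N}^{(i)}$-factors do not blow up as $\varepsilon\to0$ despite the growing number of blocks. The delicate point is that Lemma \ref{lemma:regularity_varphi} gives $\abs{\varphi_{\varepsilon,t}^{(i)}}\le C(\varepsilon t)^{i-1}(1+\abs y^K)e^{C\varepsilon t}$, so second and third derivatives carry factors $(\varepsilon T_N)^{i-1}=O(H^{i-1})$ per block; the chain rule over many blocks must be organized (e.g.\ by a discrete variation-of-constants/Alekseev--Gröbner type argument at revolution times, or a direct induction on the number of remaining blocks) so these factors telescope into a single $e^{CT}$ rather than accumulating. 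This is the genuinely technical part; the telescoping itself and the moment bounds are essentially bookkeeping once the uniform $\CC^3_P$ bound on $u$ is in hand.
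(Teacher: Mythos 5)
Your proposal is correct and follows essentially the same route as the paper: the identical telescoping decomposition, Proposition \ref{proposition:local_weak_error} applied conditionally to each summand, Proposition \ref{proposition:bounded_moments} (resp.\ Proposition \ref{proposition:bounded_moments_geometric}) for the moments of $Y_m$, and Lemma \ref{lemma:regularity_varphi} for the $\CC^3_P$ regularity of the propagated test function. The one obstacle you anticipate---chaining derivative bounds over $M-m-1$ blocks---is in fact a non-issue: by the semigroup property $\Phi^{M-m-1}=\varphi_{\varepsilon,T_{N(M-m-1)}}$ is a single flow map, so Lemma \ref{lemma:regularity_varphi}(3) applied once on the whole interval, combined with the estimate \eqref{equation:properties_T1_exponential_estimate}, gives $\abs{u^{(i)}(y)}\leq Ce^{CT}(1+\abs{y}^K)$ uniformly, exactly as the paper does with $\widetilde{\phi}_{m-1}=\phi\circ\varphi_{\varepsilon,T_{N(m-1)}}$.
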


\begin{proof}
We denote $$e_M=\E[\phi(\varphi_{\varepsilon,T_{NM}}(X_0))]-\E[\phi(\widehat{\psi}_{\varepsilon,N}^{M}(X_0))]$$ and rewrite it with a telescopic sum
\begin{align*}
e_M&=\sum_{m=1}^M \E[\phi(\varphi_{\varepsilon,T_{N(m-1)}}(\widehat{\psi}_{\varepsilon,N}^{M-m+1}(X_0)))]-\E[\phi(\varphi_{\varepsilon,T_{Nm}}(\widehat{\psi}_{\varepsilon,N}^{M-m}(X_0)))]\\
&=\sum_{m=1}^M \E[\widetilde{\phi}_{m-1}(\widehat{\psi}_{\varepsilon,N}(\widehat{\psi}_{\varepsilon,N}^{M-m}(X_0)))]-\E[\widetilde{\phi}_{m-1}(\varphi_{\varepsilon,T_N}(\widehat{\psi}_{\varepsilon,N}^{M-m}(X_0)))]
\end{align*}
where $\widetilde{\phi}_{m-1}=\phi \circ \varphi_{\varepsilon,T_{N(m-1)}}$.
Using Lemma \ref{lemma:regularity_varphi} and $\phi \in \CC^3_P$, we obtain for $0\leq i\leq 3$,
$$\abs{\widetilde{\phi}_m^{(i)}(y)}\leq Ce^{C\varepsilon T_{Nm}}(1+\abs{y}^K).$$
Thus, knowing the hitting times involved, $\widetilde{\phi}_m\in \CC^3_P$.
Using Assumption \ref{assumption:F_Lipschitz}, $(c_p^0)'=c_p^1$ and $\beta_{p,k}^N=\widetilde{\beta}_{p,k}^N+\frac{\alpha_p \alpha_k}{2}$, we deduce that $\widehat{\psi}_{\varepsilon,N}$ satisfies the assumptions of Proposition \ref{proposition:local_weak_error}. Applying Proposition \ref{proposition:local_weak_error} to each term of $e_M$ gives
$$\abs{e_M}\leq \sum_{m=1}^M C\E\left[e^{C\varepsilon T_{Nm}}\right]H^3\left(1+\E\left[\abs{\widehat{\psi}_{\varepsilon,N}^{M-m}(X_0)}^K\right]\right).$$
Finally, the moments of $\widehat{\psi}_{\varepsilon,N}^{m}(X_0)$ are all bounded uniformly in $\varepsilon$, $N$ and $m$ according to Proposition \ref{proposition:bounded_moments} (respectively \ref{proposition:bounded_moments_geometric}).
Thus
$$\abs{e_M}\leq \sum_{m=1}^M CH^3(1+\E[\abs{X_0}^{K}]) \leq CH^2(1+\E[\abs{X_0}^{K}]).$$
We deduce the global weak order two.
\end{proof}

With the help of Theorem \ref{theorem:global_convergence_theorem}, we prove Proposition \ref{proposition:asymptotic_model} and the convergence of Methods \ref{algorithm:weak_order_2} and \ref{algorithm:geometric_weak_order_2}.

\begin{proof}[Proof of Proposition \ref{proposition:asymptotic_model}]
Rewriting Theorem \ref{theorem:global_convergence_theorem} for order one yields for all $H=N\varepsilon$ small enough and all $M\geq 0$,
$$\abs{\E[\phi(\varphi_{\varepsilon,T_{NM}}(X_0))]-\E[\phi(y_{\varepsilon NM})]}\leq C(\varepsilon N)^2(1+\E[\abs{X_0}^{K}]).$$
Evaluating in $N=1$, $M=\frac{T}{\varepsilon}$ and taking the limit $\varepsilon\rightarrow 0$ yield the result.
\end{proof}

\begin{proof}[Proof of Theorem \ref{theorem:convergence_algorithm_order_2}]
As $\widehat{\alpha}_k^N\leq C$ and $\sum_{p,k} \frac{\abs{\E[\beta_{p,k}^N]}^2}{k^2}$ converges by Proposition \ref{proposition:moments_alpha_beta}, Theorem \ref{theorem:global_convergence_theorem} applies and concludes the proof.
\end{proof}

\begin{proof}[Proof of Theorem \ref{theorem:convergence_algorithm_order_2_geometric}]
The regularity assumptions yield the Lipschitzness of the $c_k^0(y)$ and the involved $c_p^1(y)(c_k^0(y))$ with constants independent of $k$ and $p$. As $\sum_{k} \abs{\widehat{\alpha}_k^N}$ and $\sum_{p,k} \abs{\widehat{\widetilde{\beta}}_{p,k}^N}$ are bounded, the right hand-side of equation \eqref{equation:geometric_order_2_scheme} is a contraction for all $H\leq H_0$ small enough and the constant does not depend on $Y_m$, so $H_0$ depends only of $F$ and $F'$. Thus, the integrator is well-posed for all $H\leq H_0$.

The weak order two is obtained using Theorem \ref{theorem:global_convergence_theorem}. Indeed the use of discrete random variables and Proposition \ref{proposition:moments_alpha_beta} give the convergence of the involved series.

For showing that Method \ref{algorithm:geometric_weak_order_2} preserves quadratic invariants, it is sufficient to prove that $\invariant'(y)(\sum_k c_k^0(y)\widehat{\alpha}_k^N)=0$ and $\invariant'(y)(\sum_{p,k} c_p^1(y)(c_k^0(y)) \widehat{\widetilde{\beta}}_{p,k}^N)=0$ (see \cite[Chap. IV]{Hairer06gni}).
The preservation of $\invariant$ by equation \eqref{equation:NLS} yields $\invariant'(y)(Ay)=0$ and $\invariant'(y)(F(y))=0$. We deduce the following two equations, valid for all $y\in \R^d$,
\begin{align}
\label{equation:lemma_g_proof_preservation1}
y^T \invariantmatrix g_\theta^0(y) &=0,\\
\label{equation:lemma_g_proof_preservation2}
y^T \invariantmatrix g_\theta^1(y)(g_\nu^0(y)) &=-(g_\nu^0(y))^T \invariantmatrix g_\theta^0(y),
\end{align}
where equation \eqref{equation:lemma_g_proof_preservation2} is obtained by differentiating equation \eqref{equation:lemma_g_proof_preservation1} in the direction $g_\nu^0$.
Using equation \eqref{equation:lemma_g_proof_preservation1}, we have
$$\invariant'(y)\left(\sum_k c_k^0(y)\widehat{\alpha}_k^N\right)=\int_0^1 \invariant'(y)(g_\theta^0(y)) \sum_k e^{-2i\pi k\theta}\widehat{\alpha}_k^N d\theta =0.$$
\pagebreak[2]
For the second order term, equation \eqref{equation:lemma_g_proof_preservation2} and the values of Proposition \ref{proposition:moments_alpha_beta} yield
\begin{align*}
\invariant'(y)&\left(\sum_{p,k} c_p^1(y)(c_k^0(y)) \widehat{\widetilde{\beta}}_{p,k}^N\right)\\
&=\int_0^1\int_0^1 y^T \invariantmatrix g_\theta^1(y)(g_\nu^0(y))\sum_{p,k} e^{-2i\pi p\theta}e^{-2i\pi k\nu} \widehat{\widetilde{\beta}}_{p,k}^N d\nu d\theta\\
&=-\int_0^1\int_0^1 (g_\nu^0(y))^T \invariantmatrix g_\theta^0(y)\sum_{p,k} e^{-2i\pi p\theta}e^{-2i\pi k\nu} \widehat{\widetilde{\beta}}_{p,k}^N d\nu d\theta\\
&=-\frac{1}{2}\int_0^1\int_0^1 (g_\nu^0(y))^T \invariantmatrix g_\theta^0(y)\sum_{p,k} [e^{-2i\pi p\theta}e^{-2i\pi k\nu} +e^{-2i\pi p\nu}e^{-2i\pi k\theta}]\widehat{\widetilde{\beta}}_{p,k}^N d\nu d\theta\\
&=0.
\end{align*}
Hence Method B is well-posed, has weak order 2 and preserves the invariant $\invariant$.
%
\end{proof}

\section{Numerical experiments}
\label{section:numerical experiments}

In this section, we first illustrate numerically the weak order two of Methods \ref{algorithm:weak_order_2} and \ref{algorithm:geometric_weak_order_2} with convergence curves.
Then, we apply the new algorithms to solve the nonlinear Schrödinger equation with highly-oscillatory white noise dispersion \eqref{equation:NLS_WND_general}.

\subsection{Weak order of convergence}

To confirm the results of Theorem \ref{theorem:convergence_algorithm_order_2} and Theorem \ref{theorem:convergence_algorithm_order_2_geometric}, we check numerically if Methods \ref{algorithm:weak_order_2} and \ref{algorithm:geometric_weak_order_2} have weak order two of accuracy w.r.t.\ts $H$ uniformly in $\varepsilon$ and $N$. 
As the Euler-Maruyama method and the algorithms presented in \cite{Cohen12otn,Belaouar15nao,Cohen17eif} are completely innacurate if they do not satisfy the severe timestep restriction $h\ll \varepsilon$, we compare the performance of Methods \ref{algorithm:weak_order_2} and \ref{algorithm:geometric_weak_order_2} to the performance of the Euler method \eqref{algorithm:weak_order_1}.
We first apply the algorithms on equation \eqref{equation:NLS} with the linearity $F(y)=iy$, $A=2i\pi$, $X_0=1$ and $\varepsilon=10^{-3}$. Equivalently we can write it in the real setting as
$$dX=\frac{2\pi}{\sqrt{\varepsilon}} \begin{pmatrix}
0&-1\\1&0
\end{pmatrix} X\circ dW +\begin{pmatrix}
0&-1\\1&0
\end{pmatrix} X dt,\ X_0=\begin{pmatrix}
1\\0
\end{pmatrix}.$$
We plot on a logarithmic scale an estimate of the weak error for approximating $X$ at time $T=10^{-3} T_{2^8}$ where $\E[T]=0.256$.
The exact solution $X(T)$ is approximated by the output of Method \ref{algorithm:geometric_weak_order_2} for $H=\varepsilon$. The parameters $N$ and $m$ are varying under the condition that $Nm=2^8$. The test function is $\phi(y)=2y_1+4y_2$ and the average is taken over $10^7$ trajectories. We choose the tolerance $10^{-13}$ for the fixed point.
On the right picture of Figure \ref{figure:weak_convergence_curves}, we use a modification of a Kubo oscillator introduced in \cite{Cohen12otn} with the nonlinearity $F(y)=i(1+\Real(y)^3+\Imag(y)^5)y$. In the real setting, it yields the following two-dimensional SDE
$$dX=\frac{2\pi}{\sqrt{\varepsilon}} \begin{pmatrix}
0&-1\\1&0
\end{pmatrix} X\circ dW +\begin{pmatrix}
0&-1\\1&0
\end{pmatrix} (1+X_1^3+X_2^5) X dt,\ X_0=\begin{pmatrix}
1\\0
\end{pmatrix}.$$
We take 8 modes for the Fourier decomposition and the same other parameters as before. The average is taken over $10^6$ trajectories.

\begin{figure}[t]
	\begin{minipage}[c]{.49\linewidth}
		\includegraphics[scale=0.5]{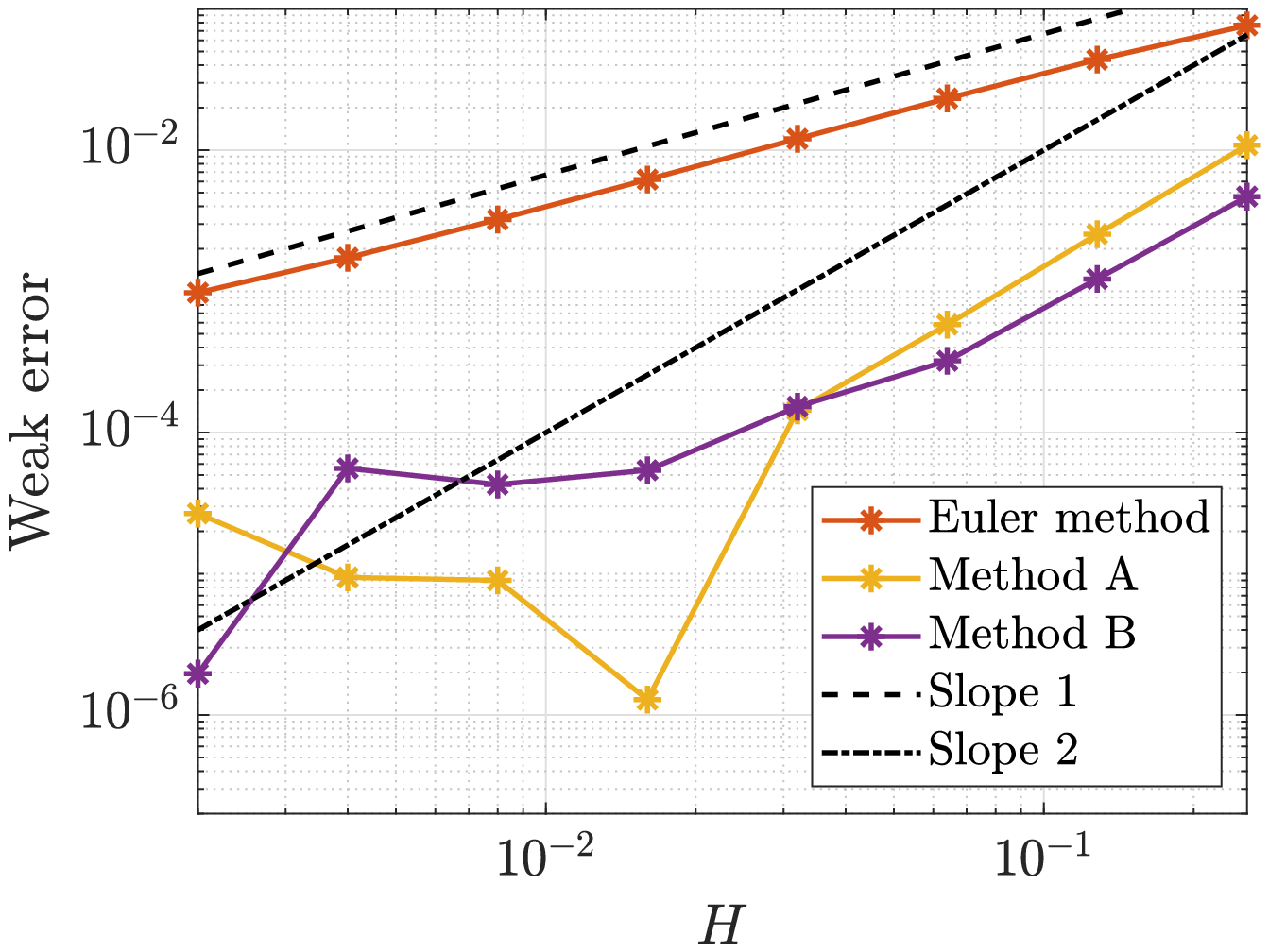}
	\end{minipage} \hfill
	\begin{minipage}[c]{.49\linewidth}
		\includegraphics[scale=0.5]{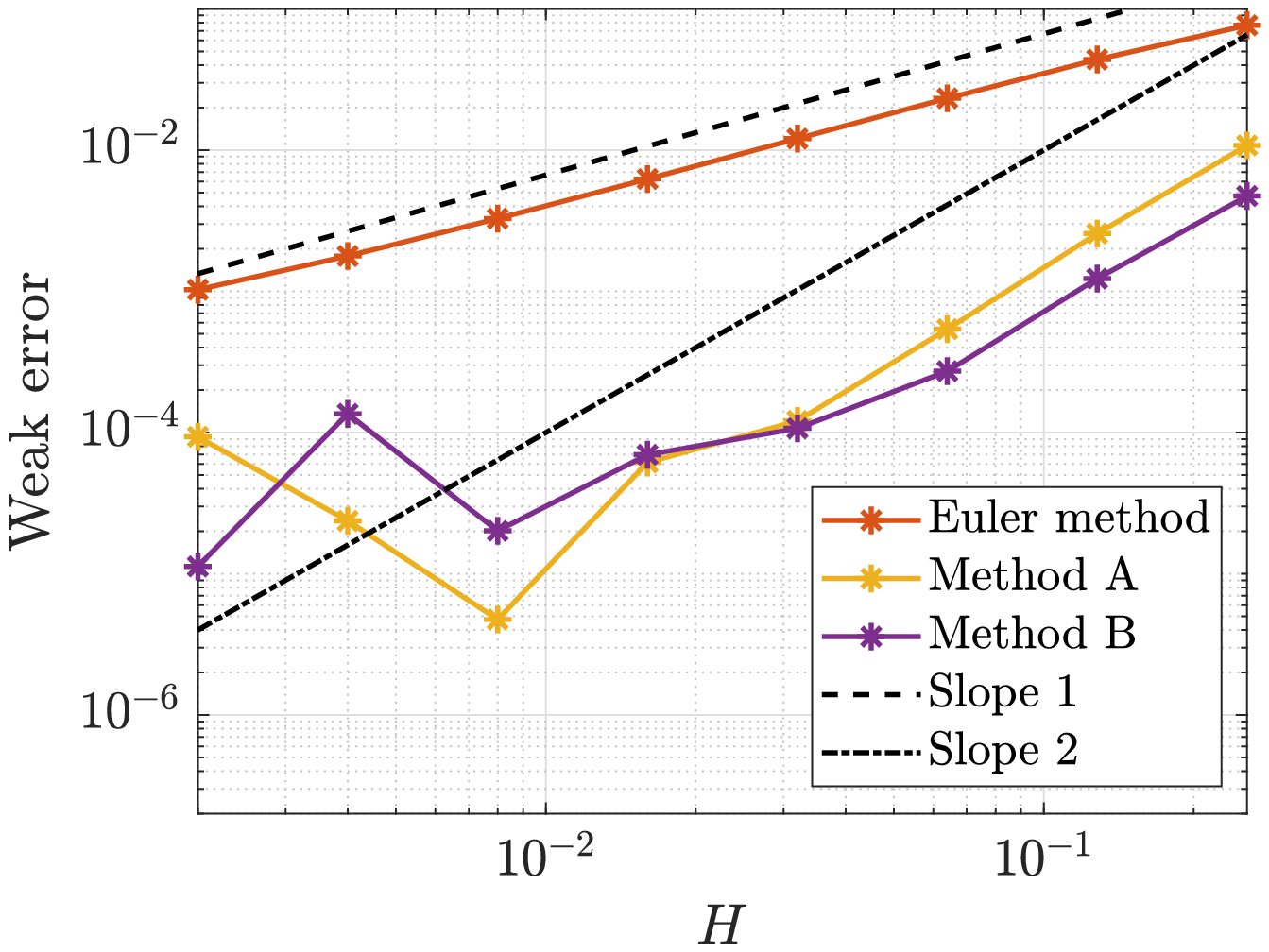}
	\end{minipage}
	\caption{Weak error versus the stepsize $H=N\varepsilon$ for approximating the solution of equation \eqref{equation:NLS} at time $\varepsilon T_{2^8}$ for the linear $F(y)=iy$ (left) and the non-linear $F(y)=i(1+\Real(y)^3+\Imag(y)^5)y$ (right) with $A=2i\pi$, $X_0=1$, $\varepsilon=10^{-3}$ and the test function $\phi(y)=2\Real(y)+4\Imag(y)$.}
	\label{figure:weak_convergence_curves}
\end{figure}

In both cases, we observe the weak order two of Methods \ref{algorithm:weak_order_2} and \ref{algorithm:geometric_weak_order_2}. The irregularities of the curve for a small $H$ come from Monte-Carlo errors.
We repeated the same experiment on many other examples and we always observe the desired order two as long as $H$ is small enough.

\subsection{Numerical experiments on NLS equation with white noise dispersion}

We now apply the algorithms to solve on the torus $\T=[-\pi,\pi]$ the following SPDE of the form \eqref{equation:NLS_WND_general}, with a polynomial linearity and the stiffness parameter $\varepsilon=10^{-2}$,
\begin{equation}
\label{equation:polynomial_NLS_white_noise_dispersion}
du=\frac{2\pi}{\sqrt{\varepsilon}}\begin{pmatrix}
0&-1\\1&0
\end{pmatrix} \Delta u\circ dW+\begin{pmatrix}
0&-1\\1&0
\end{pmatrix}\abs{u}^{2\sigma}u dt,\ x\in \T,\ t>0,
\end{equation}
where the unknown $u$ is a random process depending on $x\in \T$ and $t\geq 0$.
We consider a spectral discretization in space of this equation with $K_x=2^7$ modes $u(x,t)\approx \sum_{\abs{l}\leq K_x} Y_l(t) e^{i l x}$.
We obtain an equation of the desired form \eqref{equation:NLS} with a truncated nonlinearity and the block-diagonal matrix 
$$A=\Diag(-2\pi l^2 \begin{pmatrix}
0&-1\\1&0
\end{pmatrix}, \abs{l}\leq K_x).$$
Beginning with the initial condition $u_0(x)=\exp(-3x^4+x^2)$ on $\T$ that decreases fast enough, we apply Methods \ref{algorithm:weak_order_2} and \ref{algorithm:geometric_weak_order_2} in the two cases $\sigma=2$ and $\sigma=4$ with $K_t =2^6$ modes, $N=10$ revolutions, $m=150$ iterations and a tolerance of $10^{-13}$ for the fixed point iteration.
Figure \ref{figure:Plot_NLS_WND} shows the evolution in time of one trajectory given by Method \ref{algorithm:geometric_weak_order_2} (with a 300 points evaluation grid in space).

\begin{figure}[t]
	\begin{minipage}[c]{.49\linewidth}
		\includegraphics[scale=0.5]{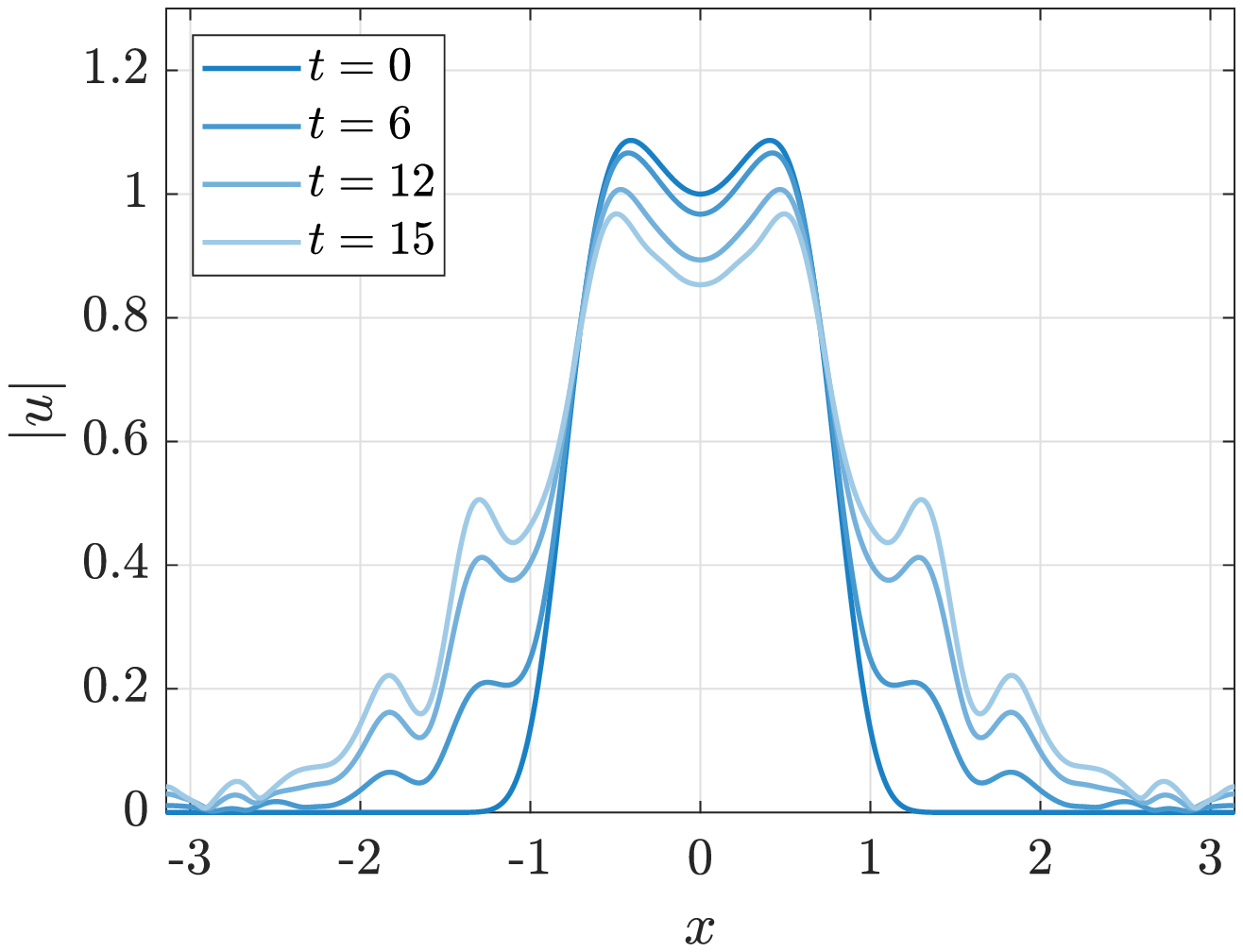}
	\end{minipage} \hfill
	\begin{minipage}[c]{.49\linewidth}
		\includegraphics[scale=0.5]{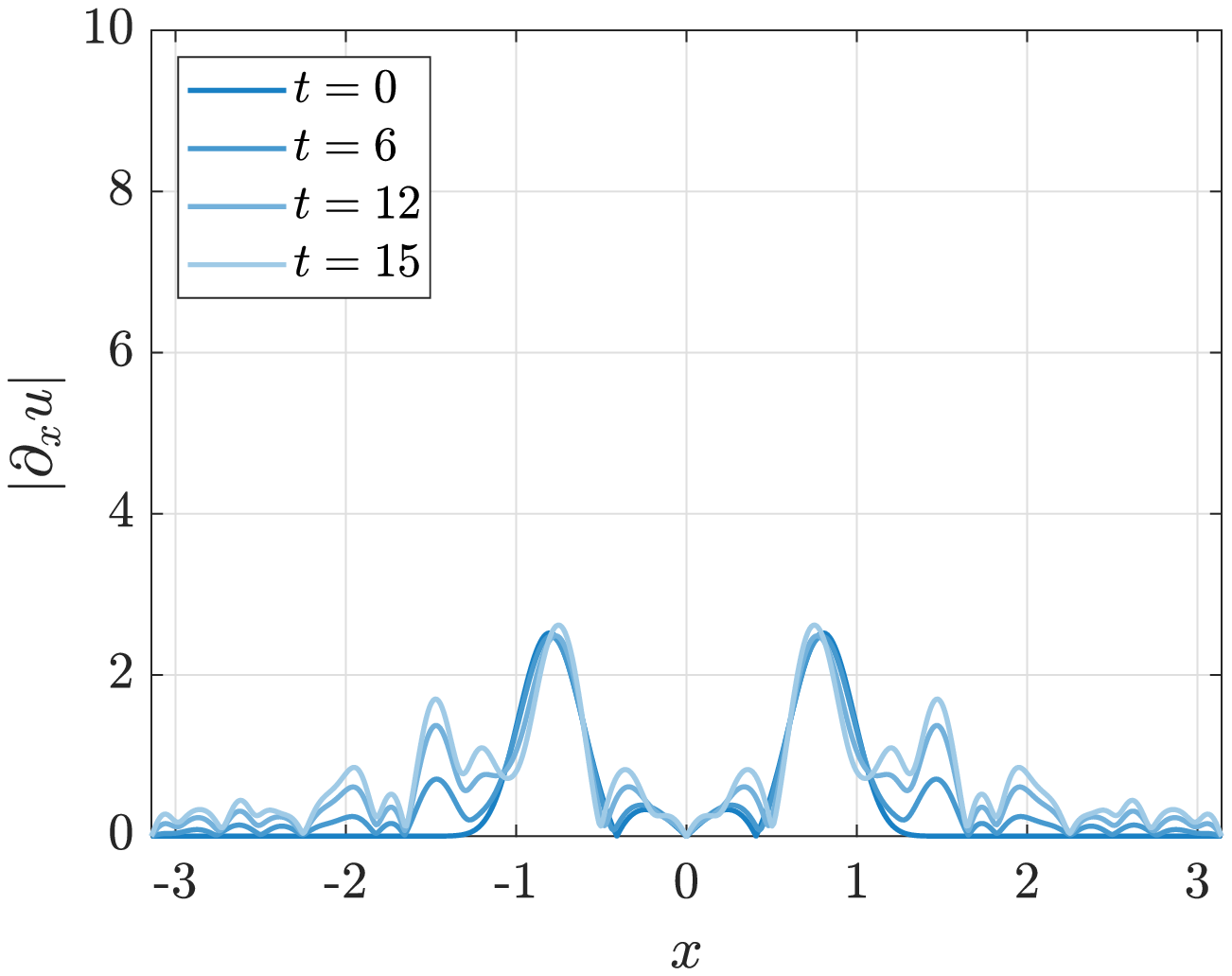}
	\end{minipage}
\vskip\baselineskip
	\begin{minipage}[c]{.49\linewidth}
		\includegraphics[scale=0.5]{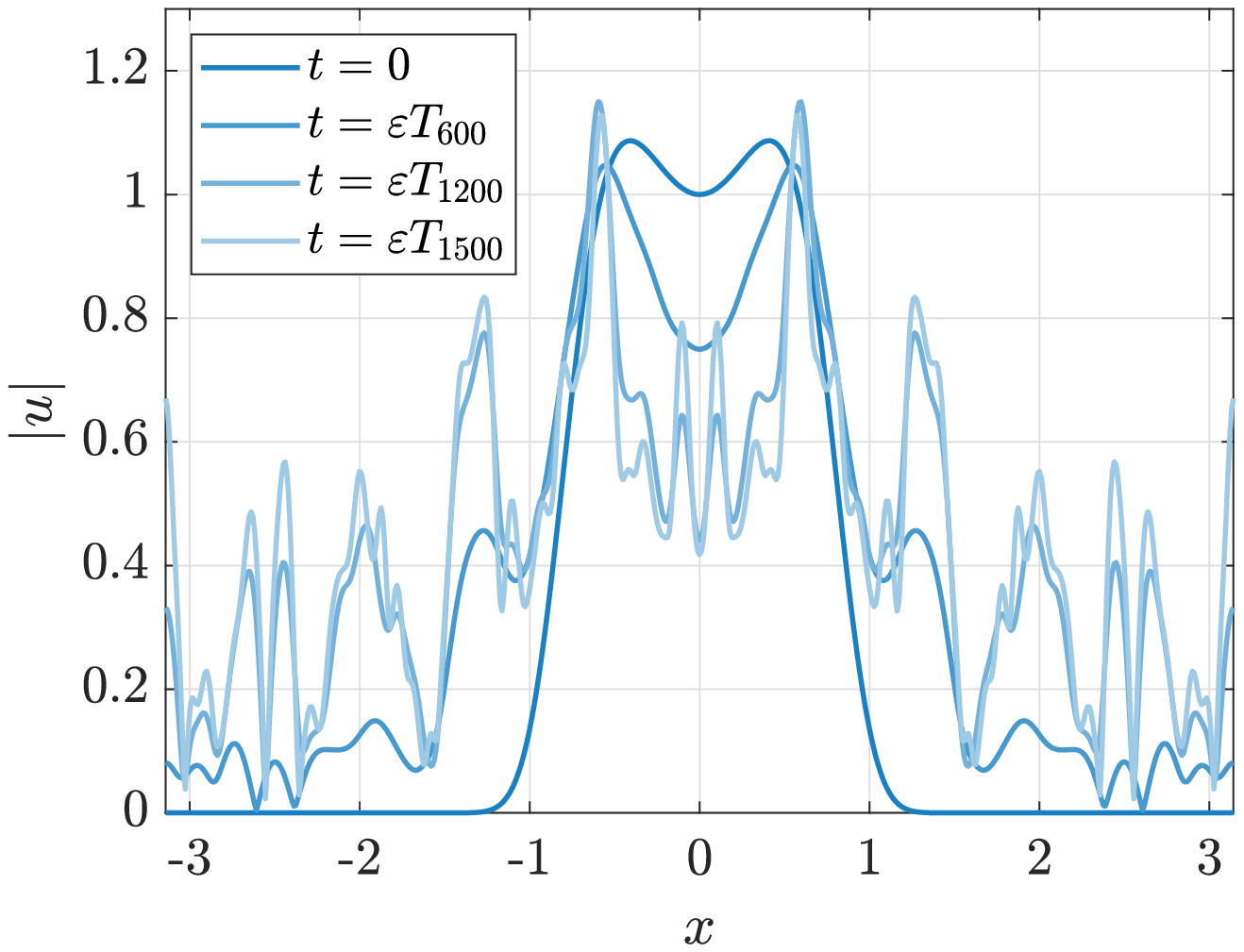}
	\end{minipage} \hfill
	\begin{minipage}[c]{.49\linewidth}
		\includegraphics[scale=0.5]{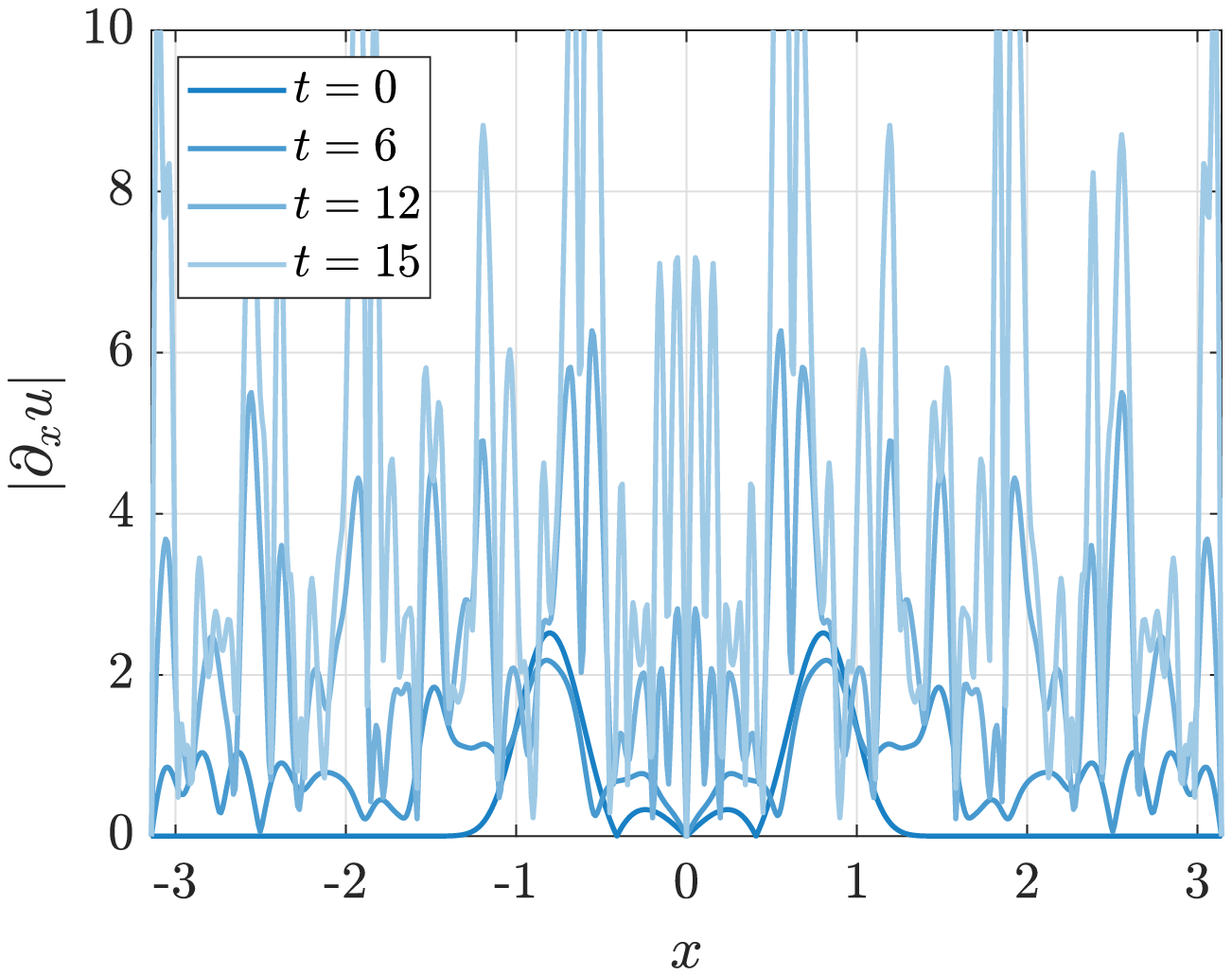}
	\end{minipage}
	\caption{Approximation by Method \ref{algorithm:geometric_weak_order_2} of $\abs{u}$ and $\abs{\partial_x u}$ with $u$ solution of a spatial discretization with $K_x=2^7$ modes of the nonlinear Schrödinger equation with white noise dispersion \eqref{equation:polynomial_NLS_white_noise_dispersion} on the torus $\T=[-\pi,\pi]$ with the parameters $\varepsilon=10^{-2}$, $\sigma=2$ (top) and $\sigma=4$ (bottom).}
	\label{figure:Plot_NLS_WND}
\end{figure}

In Figure \ref{figure:Norms_NLS_WND}, we observe the discrete $L^2$ and $H^1$ norms behaviour of one trajectory given by our two algorithms and the Euler method \eqref{algorithm:weak_order_1} (the simulated $(\alpha_k)_k$ are the same for Methods \ref{algorithm:weak_order_2} and \ref{algorithm:geometric_weak_order_2}).
The Euler method quickly blows up in both norms. The $L^2$ norm of Method \ref{algorithm:weak_order_2} is not conserved. In contrast, Method \ref{algorithm:geometric_weak_order_2} preserves the $L^2$ norm according to Theorem \ref{theorem:convergence_algorithm_order_2_geometric}.
When $\sigma=4$, numerical simulations hint that a blow-up in the $H^1$ norm always happens for all considered methods at a certain time that increases as $\varepsilon$ goes to zero.
We recall that in the optic fiber model \eqref{equation:NLS_WND_general}, $t$ represents the distance along the optic fiber and a cubic nonlinearity ($\sigma=2$) is typically considered \cite{Garnier02sod}.
For $\sigma=2$, we do not observe any blow-up in the $H^1$ norm in Figure \ref{figure:Norms_NLS_WND}, suggesting the well-posedness of the model for all optic fiber distance.
Also, the larger $\sigma$ is, the sooner the blow-up happens.
These behaviors agree with the blow-up conjecture for $\varepsilon=1$ and $\sigma\geq 4$ presented in \cite{Belaouar15nao}, and suggest that the conjecture persists in the highly-oscillatory regime $\varepsilon \ll 1$.
\begin{figure}[t]
	\begin{minipage}[c]{.49\linewidth}
		\includegraphics[scale=0.5]{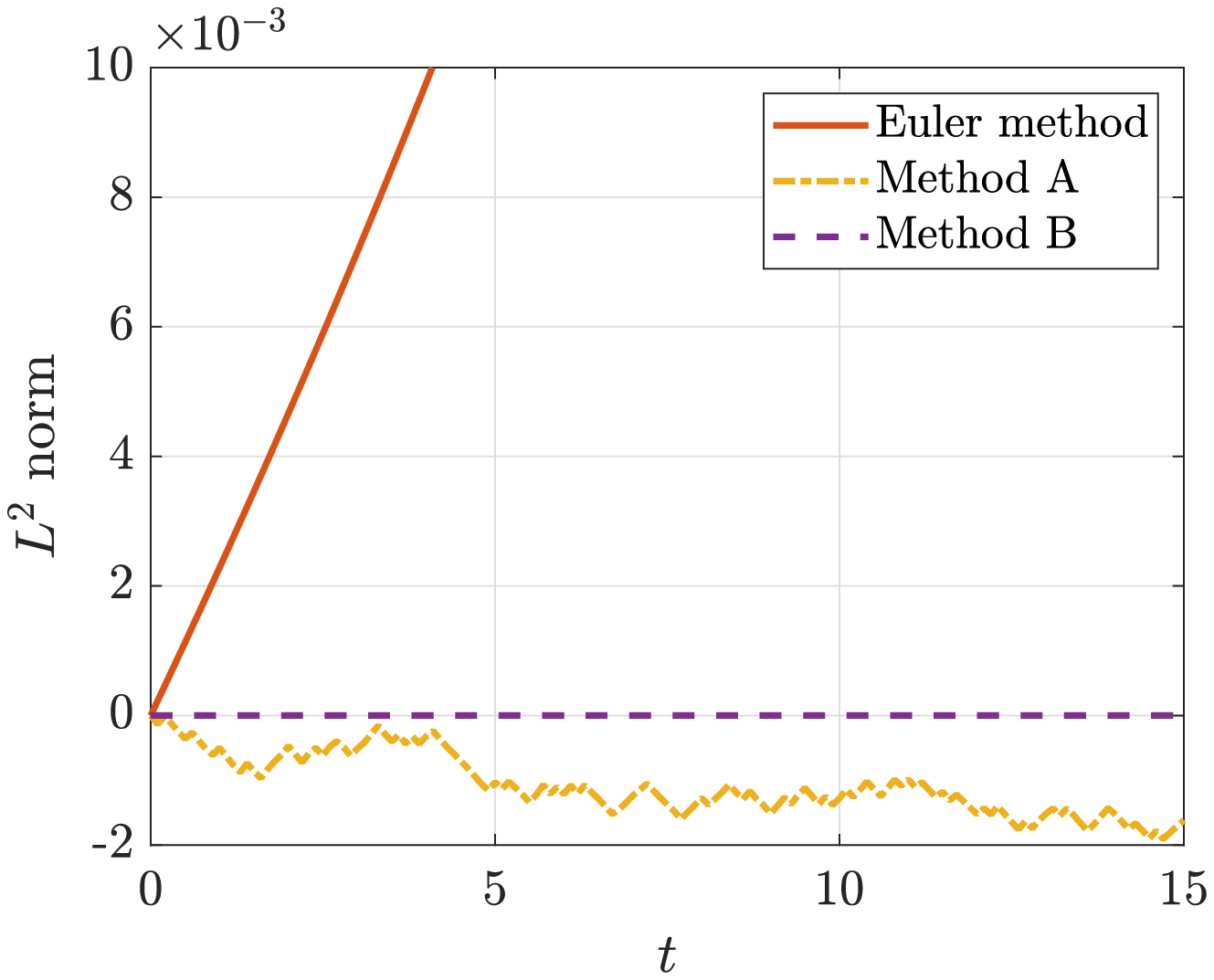}
	\end{minipage} \hfill
	\begin{minipage}[c]{.49\linewidth}
		\includegraphics[scale=0.5]{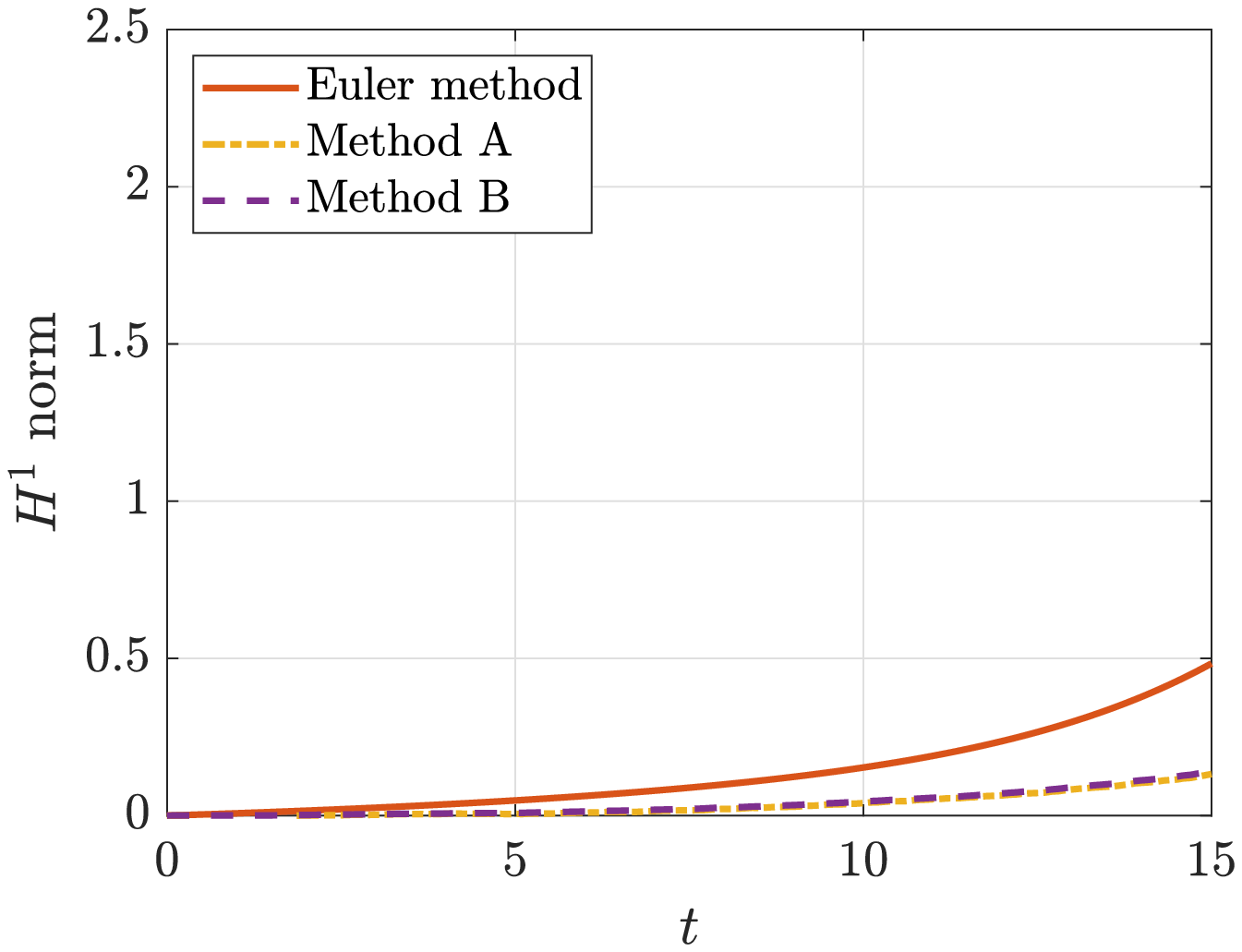}
	\end{minipage}
\vskip\baselineskip
	\begin{minipage}[c]{.49\linewidth}
		\includegraphics[scale=0.5]{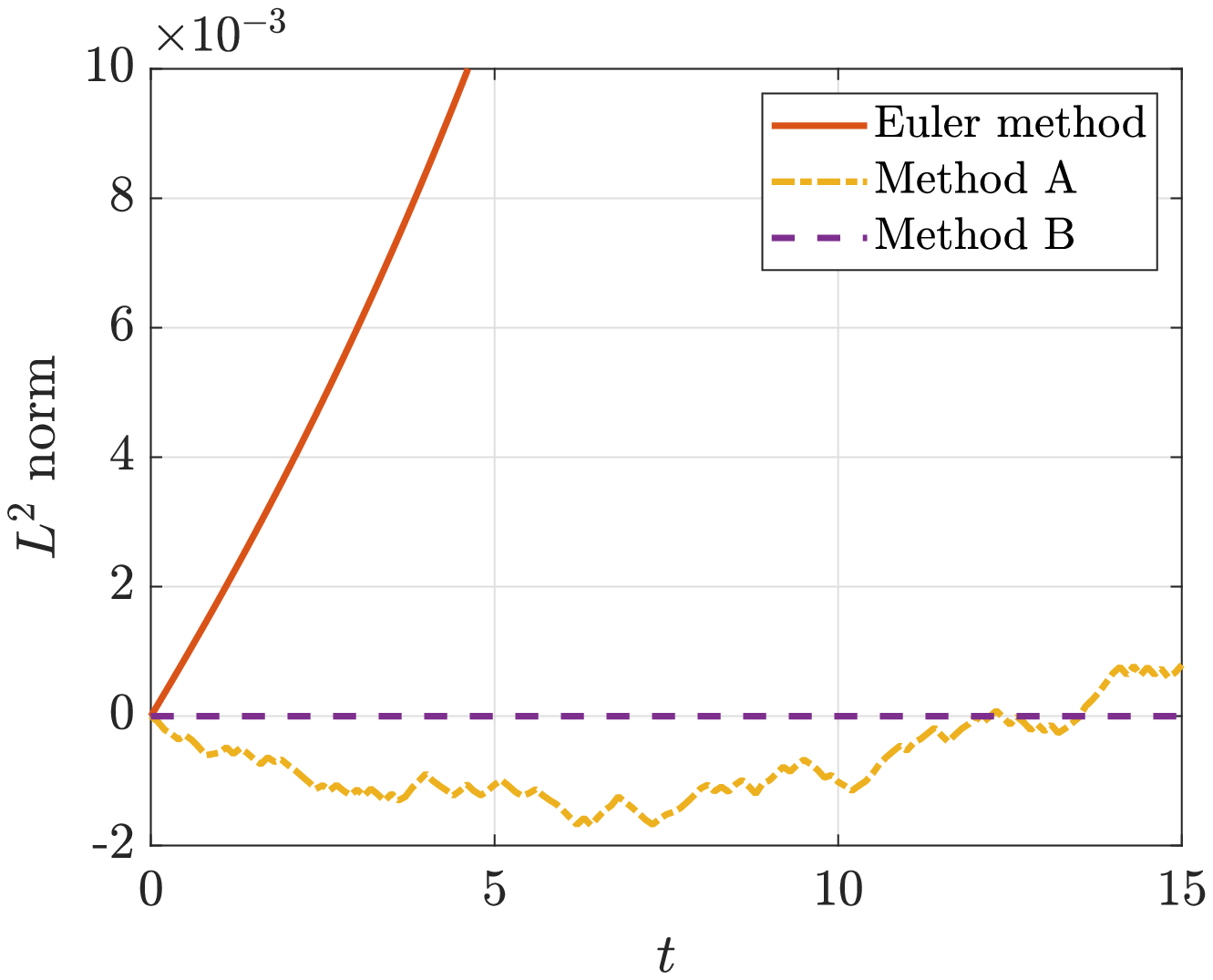}
	\end{minipage} \hfill
	\begin{minipage}[c]{.49\linewidth}
		\includegraphics[scale=0.5]{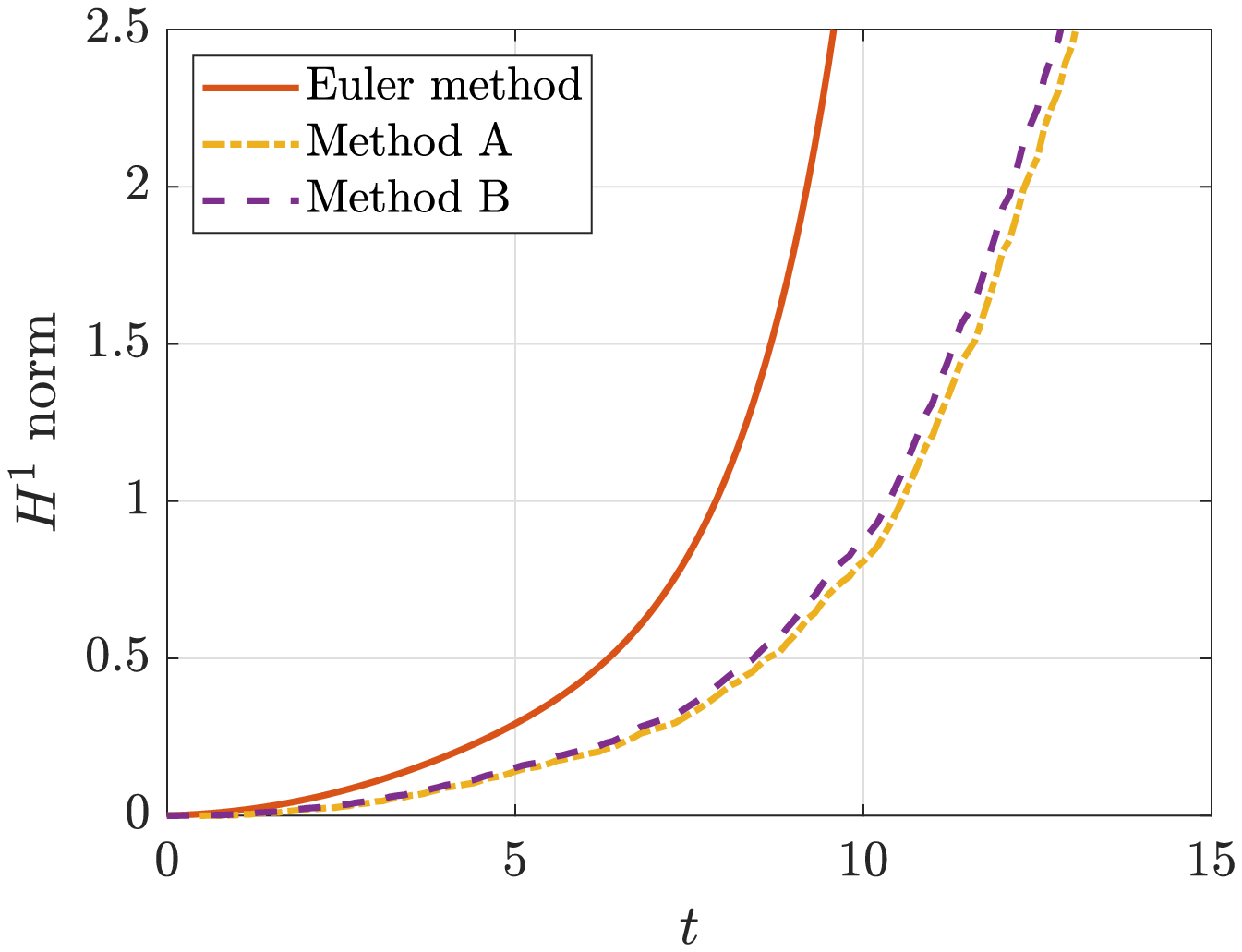}
	\end{minipage}
	\caption{Evolution in long time of the quantities $\norme{U_t}_{L^2}-\norme{u_0}_{L^2}$ (left) and $\norme{U_t}_{H^1}-\norme{u_0}_{H^1}$ (right) with $U_t$ the approximation computed with Euler method and Methods \ref{algorithm:weak_order_2} and \ref{algorithm:geometric_weak_order_2} for one trajectory of equation \eqref{equation:polynomial_NLS_white_noise_dispersion} with $\varepsilon=10^{-2}$, $\sigma=2$ (top) and $\sigma=4$ (bottom).}
	\label{figure:Norms_NLS_WND}
\end{figure}

\bigskip

\section*{Acknowledgments}
The authors would like to thank Georg Gottwald for helpful and stimulating discussions.
This work was partially supported by the Swiss National Science Foundation, grants No. 200020\_184614, No. 200021\_162404 and No. 200020\_178752.
The computations were performed at the University of Geneva on the Baobab cluster using the Julia programming language.

\bibliographystyle{abbrv}
\bibliography{Ma_Bibliographie}

\appendix
\section*{Appendix} 



\begin{proof}[Proof of Lemma \ref{lemma:regularity_varphi}]
\begin{enumerate}
\item First, $\varphi_{\varepsilon,t}(y)$ is the solution of
$$\varphi_{\varepsilon,t}(y)=e^{AW(t)}y+\varepsilon e^{AW(t)} \int_0^{t} e^{-AW(s)} F(\varphi_{\varepsilon,s}(y)) ds.$$
Using the boundedness of the continuous periodic function $\theta\rightarrow e^{\theta A}$ and Assumption \ref{assumption:F_Lipschitz}, we get
$$\abs{\varphi_{\varepsilon,t}(y_1)-\varphi_{\varepsilon,t}(y_2)}\leq \abs{y}+L\varepsilon \int_0^t \abs{\varphi_{\varepsilon,s}(y_1)-\varphi_{\varepsilon,s}(y_2)} ds.$$
The Gronwall lemma yields the desired bound.
\item Straightforward using previous statement.
\item Differentiating the integral formulation defining $\varphi_{\varepsilon,t}(y)$ gives
$$\partial_y \varphi_{\varepsilon,t}(y)(h)=e^{AW(t)}h+\varepsilon e^{AW(t)} \int_0^{t} e^{-AW(s)} F'(\varphi_{\varepsilon,s}(y))(\partial_y \varphi_{\varepsilon,s}(y)(h)) ds.$$
Then Assumption \ref{assumption:F_Lipschitz} yields
$$\abs{\partial_y \varphi_{\varepsilon,t}(y)(h)}\leq \abs{h}+L \varepsilon \int_0^{t} \abs{\partial_y \varphi_{\varepsilon,t}(y)(h)} ds.$$
The Gronwall lemma allows to obtain
$$\abs{\partial_y \varphi_{\varepsilon,t}(y)}\leq e^{L\varepsilon t}.$$
For the second derivative, we get
\begin{align*}
\partial_y^2 \varphi_{\varepsilon,t}(y)(h,k)&=\varepsilon e^{AW(t)} \int_0^{t} e^{-AW(s)} [F'(\varphi_{\varepsilon,s}(y))(\partial_y^2 \varphi_{\varepsilon,s}(y)(h,k))\\
&+F''(\varphi_{\varepsilon,s}(y))(\partial_y \varphi_{\varepsilon,s}(y)(h),\partial_y \varphi_{\varepsilon,s}(y)(k))] ds.
\end{align*}
Then
\begin{align*}
\abs{\partial_y^2 \varphi_{\varepsilon,t}(y)(h,k)}&\leq
C\varepsilon \int_0^{t} [(1+\abs{\varphi_{\varepsilon,s}(y)}^K)\abs{\partial_y \varphi_{\varepsilon,s}(y)(h)}\abs{\partial_y \varphi_{\varepsilon,s}(y)(k)}\\
&+\abs{\partial_y^2 \varphi_{\varepsilon,s}(y)(h,k)}] ds\\
&\leq C\varepsilon t (1+\abs{y}^K)e^{C\varepsilon t}\abs{h}\abs{k} + C\varepsilon \int_0^{t} \abs{\partial_y^2 \varphi_{\varepsilon,s}(y)(h,k)}.
\end{align*}
Then the Gronwall lemma allows to conclude.
The proof is similar for the third derivative.
\end{enumerate}
\end{proof}

\end{document}